\newcommand{\C}{\mathbb{C}}
\newcommand{\R}{\mathbb{R}}
\newcommand{\HH}{\mathbb{H}}
\newcommand{\p}[1]{{\mathbb{P}^{#1}}}
\newcommand{\hp}[1]{{\mathbb{H}\mathbb{P}^{#1}}}
\newcommand{\cp}[1]{{\mathbb{C}\mathbb{P}^{#1}}}
\newcommand{\pn}{{\mathbb{P}^n}}
\newcommand{\op}[1]{{\mathcal O}_{\mathbb{P}^{#1}}}
\newcommand{\opn}{{\mathcal O}_{\mathbb{P}^n}}
\newcommand{\ch}{\operatorname{ch}}
\newcommand{\calb}{{\mathcal B}}
\newcommand{\calc}{{\mathcal C}}
\newcommand{\cald}{{\mathcal D}}
\newcommand{\cale}{{\mathcal E}}
\newcommand{\calf}{{\mathcal F}}
\newcommand{\calh}{{\mathcal H}}
\newcommand{\cali}{{\mathcal I}}
\newcommand{\calo}{{\mathcal O}}
\newcommand{\calr}{{\mathcal R}}
\newcommand{\inhom}{{\mathcal H}{\it om}}
\newcommand{\Ext}{\operatorname{Ext}}
\newcommand{\Sing}{\operatorname{Sing}}
\newcommand{\Supp}{\operatorname{Supp}}
\newcommand{\Hom}{\operatorname{Hom}}
\DeclareMathOperator{\Coh}{Coh}
\DeclareMathOperator{\End}{{End}}
\DeclareMathOperator{\coker}{coker}
\DeclareMathOperator{\im}{im}
\DeclareMathOperator{\rk}{{rk}}
\DeclareMathOperator{\Pic}{{Pic}}
\newcommand{\lra}{\longrightarrow}
\newcommand{\into}{\hookrightarrow}
\newcommand{\onto}{\twoheadrightarrow}
\newtheorem{theorem}{Theorem}[section]
\newtheorem{proposition}[theorem]{Proposition}
\newtheorem{lemma}[theorem]{Lemma}
\newtheorem{corollary}[theorem]{Corollary}
\theoremstyle{definition}
\newtheorem{remark}[theorem]{Remark}
\newtheorem{example}[theorem]{Example}
\newtheorem{definition}[theorem]{{\bf Definition}}
\title{Instantons: the next frontier}
\author{Gaia Comaschi}
\address{IMECC - UNICAMP \\ Departamento de Matem\'atica \\
Rua S\'ergio  Buarque de Holanda, 651\\ 13083-970 Campinas-SP, Brazil}
\email{gaia.comaschi@gmail.com}
\author{Marcos Jardim}
\address{IMECC - UNICAMP \\ Departamento de Matem\'atica \\
Rua S\'ergio  Buarque de Holanda, 651\\ 13083-970 Campinas-SP, Brazil}
\email{jardim@ime.unicamp.br}
\author{Cristian Martinez}
\address{School of Engineering, Science and Technology \\Universidad del Rosario\\
Carrera 6 No. 12C-16, 111711, Bogot\'a, Colombia}
\email{cristianm.martinez@urosario.edu.co}
\author{Dapeng Mu}
\address{IMECC - UNICAMP \\ Departamento de Matem\'atica \\
Rua S\'ergio  Buarque de Holanda, 651\\ 13083-970 Campinas-SP, Brazil}
\email{dapeng@unicamp.br}
\date{}
\begin{document}

\maketitle

\begin{abstract}
Instantons, emerged in particle physics, have been intensely studied since the 1970's and had an enormous impact in mathematics since then. In this paper, we focus on one particular way in which mathematical physics has guided the development of algebraic geometry in the past 40+ years. To be precise, we examine how the notion of \textit{(mathematical) instanton bundles} in algebraic geometry has evolved from a class of vector bundles over $\cp3$ both to a class of torsion free sheaves on projective varieties of arbitrary dimension, and to a class of objects in the derived category of Fano threefolds. The original results contained in this survey focus precisely on the latter direction; in particular, we prove that the classical rank 2 instanton bundles over the projective space are indeed instanton objects for any suitable chamber in the space of Bridgeland stability conditions.
\end{abstract}

\tableofcontents

\section{Introduction}

Let $(M,g)$ be a 4-dimensional, oriented Riemannian manifold, and let $E\to M$ be a complex vector bundle over $M$. A \textit{connection} on $E$ is a $\C$-linear map
$$ \nabla ~\colon~ \Gamma(E) \longrightarrow \Gamma(E)\otimes\Omega^1_M $$
satisfying the \textit{Leibniz rule}: given a smooth function $f\in C^{\infty}(M)$ and a global section $\sigma\in\Gamma(E)$, we have
$$ \nabla(f\cdot\sigma) = f\cdot \nabla(\sigma) + \sigma\otimes df; $$
here, $\Gamma(E)$ denotes the $C^{\infty}(M)$-module of smooth global sections of the vector bundle $E$, and $\Omega^p_M$ denotes the space of smooth $p$-forms on $M$.

The composition $F_\nabla:=\nabla\circ\nabla$ leads to a $C^{\infty}(M)$-linear map
$$ F_\nabla ~\colon~ \Gamma(E) \longrightarrow \Gamma(E)\otimes\Omega^2_M $$
which is called the \textit{curvature} of the connection $\nabla$. In other words, $F_\nabla$ can be regarded as a 2-form on $M$ with values in the endomorphism bundle $\End(E)$: $F_\nabla\in\Gamma(\End(E))\otimes\Omega^2_M$. 

A connection $\nabla$ is called an \textit{instanton connection} on the vector bundle $E\to M$ if $F_\nabla$ is anti-self-dual with respect to the Hodge star operator $*:\Omega^2_M\to\Omega^2_M$ (recall that $*^2=1$), that is
\begin{equation} \label{eq:asdym}
*F_\nabla = - F_\nabla.    
\end{equation}
which is known as the \textit{anti-self-dual Yang--Mills equation}.  When $M$ is not compact, one usually also imposes a finiteness condition on the total $L^2$-norm of the curvature, that is,
$$ \|F_\nabla\|_{L^2}^2 := \int_M {\rm tr}(F_\nabla\wedge*F_\nabla) < \infty. $$

Instantons have been intensely studied since the 1970's, providing the initial ideas for the rise of a new area of mathematics, namely \textit{gauge theory}, that had great influence in more traditional areas like differential topology, differential geometry, mathematical physics, representation theory, and algebraic geometry. In this paper, we focus on the latter, by explaining how the notion of \textit{(mathematical) instanton bundles} in algebraic geometry has evolved in the past 40+ years.

The starting point was the \textit{Atiyah--Ward correspondence}, which is described in Section \ref{section:AWcorrespondece} below; it was the first fundamental link between mathematical physics and algebraic geometry to be discovered, (another one is the \textit{Hitchin--Kobayashi correspondence}), and leads to the highly influential work of Atiyah, Hitchin Drinfeld and Manin \cite{ADHM}. Roughly speaking, the Atiyah--Ward correspondence transformed the differential geometric problem of finding solutions of the anti-self-dual Yang--Mills equation \ref{eq:asdym} over $M=\R^4$ into an algebraic geometric problem of constructing vector bundles over $\cp3$ with certain properties. These vector bundles were subsequently studied by mathematicians (namely, Barth, Hulek, Le Potier, Okonek, Schneider, Spindler) who coined the expression \textit{(mathematical) instanton bundles} in the 1980's. What initially was a class of vector bundles over $\cp3$ became, after the work of Salamon \cite{S84} and Donaldson \cite{DFlag}, a class of vector bundles on odd dimensional projective spaces \cite{OS} and on flag manifolds \cite{MMPL}.

In Section \ref{section:perverse} we explain why it is natural to consider a further generalization of instanton bundles on projective spaces both to arbitrary projective spaces and to non locally free sheaves. The notion of (non necessarily locally free) \textit{instanton sheaves} was introduced in \cite{J-inst}, and lead also to the notion of \textit{perverse instanton sheaves}, see Definition \ref{defn:instantons} and Definition \ref{defn:perverse}, respectively. These notions arise when one tries to compactify the moduli space of instanton bundles when regarded as an open subset either of the Gieseker moduli space of stable sheaves, or the moduli space of stable representations of a certain quiver.

In Section 4 and Section 5 we illustrate how it is possible to extend the definition of instanton to algebraic varieties beyond projective spaces.
In Section 4, we focus on the specific case of Fano threefolds of Picard rank one. Also in this setting, we start treating the case of rank 2 vector bundles: we summarize their main properties and those of their families presented by Faenzi in \cite{F}.
Once again, we are lead to take into account a wider family of sheaves which includes sheaves of arbitrary rank and that are not necessarily locally free. This is what motivated
\cite{CJ}, where the definition of instanton sheaves on Fano threefolds is provided and their main features are collected.
An investigation of rank 2 instanton bundles on a Fano threefold $X$ (in particular on a Fano $X$ of index 2) has also been conducted by Kuznetsov in \cite{K}, but this time the language chosen 
to present features of instantons is the one of derived categories. Kuznetsov establishes indeed 
a correspondence between instanton bundles on $X$ and objects in the triangulated category $\calb_X:=\langle \calo_X, \calo_X(1)\rangle ^{\perp}$, named \textit{acyclic extensions} of instantons, and shows how the categorical properties of these latter ``reflect'' the sheaf-theoretical properties of instantons. We end Section 4 with a brief survey of the main results of \cite{K}.

Section \ref{section:h-inst} is dedicated to review the definition and main properties of $h$-instanton sheaves, which have been recently introduced by  Antonelli and Casnati in \cite{AC}. Their notion seeks to generalize the previous definitions of instanton sheaves to arbitrary projective schemes $X$ endowed with an ample and globally generated line bundle $\calo_X(h)$. 
For an $n$ dimensional scheme $X$, the line bundle $\calo_X(h)$ induces a finite map from $X$ to $\mathbb{P}^n$. The direct image of an ordinary $h$-instanton sheaf on $X$ 
by this map is indeed an instanton sheaf on $\pn$ in the sense of \cite{J-inst}, and conversely the pullback of an instanton sheaf on $\pn$ is an ordinary $h$-instanton sheaf on $X$.
One of the noticeable features of an instanton sheaf is that it can be constructed as the cohomology of a monad. An $h$-instanton sheaf on $X$ has such a monadic presentation if $X$ is ACM with respect to the line bundle $\calo_X(h)$. We then focus on some particular cases of $X$, where the monads for ordinary $h$-instantons are more neatly presented. Those monads overlap some known monadic presentations on $\pn$ and some smooth quadrics, and provide some new monadic presentations on scrolls.
Next, we review the construction of rank 2 $h$-instanton bundles on some smooth varieties of low dimension, namely curves, surfaces, Fano threefolds of Picard rank $1$ and scrolls. In particular, we compare the rank $2$ $h$-instanton bundles on a Fano threefold with the instanton bundles in Section \ref{section:Fano}. We will see that these two notions coincide in most cases.
In the end, we review an example of an $h-$instanton bundle on the image of the Segre embedding $\p1\times \p1 \times \p1 \hookrightarrow \p3$ to illustrate that $h$-instanton bundles on varieties of higher Picard rank can have several pathologies comparing to the classical instanton bundles.

Finally, Section \ref{section:bridgeland} contains the original results presented in this paper. We return to the setup of Section \ref{section:Fano} and introduce the notion of $\calc$-instanton object in the derived category of a Fano threefold of Picard number one. This notion is highly inspired by the works of Faenzi \cite{F}, Kuznetsov \cite{K}, and Comaschi--Jardim \cite{CJ}, and the examples of Bridgeland semistable objects in the projective space provided in \cite{JMM}. The idea is to find a chamber $\calc$ in the stability manifold so that semistability in this chamber alone already provides the correct cohomology vanishing conditions. In the cases of the projective space and the quadric threefold, we prove that there is a chamber $\calc$ such that $\calc$-instantons have monad-type descriptions. Examples of $\calc$-instanton objects are provided, including some that do not fit any previous definitions. Additionally, we prove that the classical rank 2 instanton bundles in the projective space are indeed $\calc$-instanton objects for any suitable chamber $\calc$.  Acyclic extensions are proved to exist for stable $\calc$-instantons on any Fano threefold of index 2, and moreover such acyclic extensions are again $\calc$-instanton objects.

\subsection*{Acknowledgments}
The authors would like to express their gratitude to Daniele Faenzi for numerous conversations on the topic of this article, and in particular for suggesting the invariance under the twisted duality functor \eqref{dualityFunctor} as the right way to fix the Chern character of an instanton object, which allowed us to find the correct $(\alpha,s)$-slice for defining $\calc$-instanton objects in Section \ref{section:bridgeland}. GC is supported by the FAPESP post-doctoral grant number 2019/21140-1 and the BEPE grant number 2022/09063-4. MJ is supported by the CNPQ grant number 302889/2018-3 and the FAPESP Thematic Project \textit{Gauge theory and Algebraic Geometry} number 2018/21391-1.  CM is supported by the FAPESP post-doctoral grant number 2020/06938-4, which is part of the FAPESP Thematic Project \textit{Gauge theory and Algebraic Geometry} number 2018/21391-1. DM is supported by the FAPESP post-doctoral grant number 2020/03499-0.


\section{Atiyah--Ward correspondence and mathematical instanton bundles}\label{section:AWcorrespondece}

We will now consider our Riemannian manifold $(M,g)$ as being 
the four dimensional sphere $S^4$ equipped with the usual round metric; recall that this is conformal to the usual euclidean metric on $\R^4\simeq S^4\setminus\{\infty\}$; by virtue of the Uhlenbeck removable singularities theorem, any instanton connection on $\R^4$ extends to an instanton on $S^4$.

Furthermore, $S^4$ can be identified with the quaternionic projective line $\hp1$ as follows: any $p\in\R^4$ is associated to the point $[p:1]\in\hp1$, while $\infty$ goes to $[1:0]\in\hp1$. One can then consider the smooth map $\tau:\cp3\to\hp1$ given by
$$ \tau[x:y:z:w] = [x+jy:z+jw] . $$
Note that the fibers of $\tau$ are isomorphic to $\cp1$: the pre-image of the point $[u+jv:1]\in\hp1$ is the line $[u:v:1:0]$, while $\tau^{-1}([1:0])=[u:v:0:0]$.

This so-called \textit{twistor map} has the following fantastic property, first noticed by Atiyah and Ward in \cite{AW}. If $(E,\nabla)$ is a complex vector bundle on $S^4\simeq\hp1$ equipped with an instanton connection, then the curvature $F_{\tau^*\nabla}$ of the pull-back connection $\tau^*\nabla$ is a 2-form with values in $\tau^*\End(E)=\End(\tau^*E)$ of type $(1,1)$. This means that $F_{\tau^*\nabla}$ induces a holomorphic structure on the pulled-back vector bundle $\tau^*E$; let us denote this by $\cale$.

Observe that $\cale$ must satisfy some obvious properties. First, its restriction to the fibers $\tau^{-1}(p)$ must always be holomorphically trivial. Second, note that
$$ \tau[x:y:z:w]=\tau[-y:x:-w:z], $$
since their images only differ by multiplication by $j$ on the left; therefore, the pulled-back bundle $\tau^*E$ must be invariant under the involution $\iota:\cp3\to\cp3$ given by $\iota[x,y,z,w]=[-y:x:-w:z]$, ie. $\iota^*\cale\simeq\cale$. 

The non-trivial key property satisfied by $\cale$ is provided by the \textit{Penrose transform}, which provides an isomorphism between the kernel of the Laplacian operator $\Delta$ coupled to the instanton connection $\nabla$, and the sheaf cohomology group $H^1(\cp3,\cale(-2))$, where $\cale(-2)=\cale\otimes\calo_{\cp3}(-2)$ as usual. It turns out that $\ker\Delta$ is empty precisely because $S^4$ has positive scalar curvature and $F_\nabla$ is anti-self-dual. Therefore, we must have that $H^1(\cp3,\cale(-2))=0$.

The \textit{Atiyah--Ward correspondence} correspondence essentially says that the original smooth instanton connection on $S^4$ can be reconstructed from the associated holomorphic vector bundle $\cale$ satisfying the properties described above. (Since we would like to get into algebraic geometry as soon as possible, we are actually omitting many details here and this claim is an oversimplification of the actual theorems; the interested reader should look at \cite{A,WW}).

The Atiyah--Ward correspondence was the first fundamental link between mathematical physics an algebraic geometry to be discovered, and it has sparked a flurry of intense activity among algebraic geometers; Hartshorne \cite{H78} and Barth--Hulek \cite{BH} are perhaps the first algebraic geometry papers on bundles on projective spaces motivated by the works of Atiyah and collaborators (especially \cite{AW,ADHM}). However, to our knowledge, the first reference that uses the expression \textit{``instanton bundle"} is \cite[p. 370]{OSS}; in this reference, the authors define a \textit{complex instanton bundle} on $\cp3$ as a stable rank 2 holomorphic bundle $\cale$ with $c_1(\cale)=0$ satisfying $H^2(\cale(-2))=0$.

At this point (circa 1977-78), Barth noticed that every such complex instanton bundle $E$ can be realized as the cohomology of a \textit{monad}, that is, a complex of sheaves
\begin{equation} \label{monad1}
\calo_{\cp3}(-1)^{\oplus c} \stackrel{\alpha}{\longrightarrow} \calo_{\cp3}^{\oplus 2+2c} \stackrel{\beta}{\longrightarrow} \calo_{\cp3}(1)^{\oplus c}    
\end{equation}
for which $\alpha$ is injective, $\beta$ is surjective, and such that $E\simeq\ker\beta/\im\alpha$, and where $c=h^1(\cale(-1))$. Since the morphisms $\alpha$ and $\beta$ can be regarded as matrices whose entries are linear polynomials, this allowed to translate the classification of complex instanton bundles into a problem in linear algebra. This is essentially the crucial point explored in the seminal paper \cite{ADHM} by Atiyah, Drinfeld, Hitchin and Manin, where the full classification of $SU(2)$ instantons on $S^4$ was given. 

In 1984, Salamon presented a higher dimensional version of the Atiyah--Ward correspondence \cite{S84}; see also \cite{MCS}. He considers a map $\tau:\cp{2k+1}\to\hp{k}$ given by
$$ \tau[x_0:y_0:\cdots:x_k:y_k]=[x_0+jy_0:\cdots:x_k+jy_k], $$
thus generalizing the twistor map defined above; the fibers of $\tau:\cp{2k+1}\to\hp{k}$ are also isomorphic to $\cp1$. Now let $E\to\hp{k}$ be a complex vector bundle equipped with a connection $\nabla$; this is said to be a \textit{quaternionic instanton} if its curvature $F_\nabla$ is of type $(1,1)$ with respect to any choice of almost complex structure in $\hp{k}$; when $k=1$, this is equivalent to the usual definition of an instanton on a 4-dimensional manifold.

It turns out that this condition is just what is needed to prove that the curvature $F_{\tau^*\nabla}$ of the pulled-back connection on $\tau^*E$ is of type $(1,1)$ on $\cp{2k+1}$, and therefore induces a holomorphic structure on $\tau^*E$. 

This motivated the definition of \textit{mathematical instanton bundles} by Okonek and Spindler, see \cite{OS} in the following year. To be precise, a mathematical instanton bundle is a rank $2k$ holomorphic bundle $\cale$ on $\cp{2k+1}$ satisfying the following conditions
\begin{enumerate}
\item $E$ is simple, ie. $\Hom(\cale,\cale)=\C$;
\item its Chern polynomial is given by $c_t(\cale)=(1-t^2)^{-c}$;
\item it has natural cohomology in the rank $-2k-1\le p\le 0$, that is, for each $p$ in the specified rank, at most one of the cohomology groups $H^p(\cale(l))$ can be non trivial;
\item $\cale$ has \textit{trivial splitting type}, ie. $\cale|_l$ is trivial for at least one line $\ell\subset\cp{2k+1}$;
\item $\cale$ admits a symplectic structure, meaning that there exists an isomorphism $\phi:\cale\to\cale^*$ such that $\phi^*=-\phi$.
\end{enumerate}
Later, Ancona and Ottaviani noticed in \cite{AO} that conditions (2) and (3) imply condition (1). Mathematical instanton bundles were the subject of several articles in 1980's and 1990's. One fact that will be relevant later on is that conditions (2) and (3) imply that any mathematical instanton bundle is isomorphic to the cohomology of a monad
\begin{equation} \label{monad2}
\calo_{\cp{2k+1}}(-1)^{\oplus c} \stackrel{\alpha}{\longrightarrow} \calo_{\cp{2k+1}}^{\oplus 2k+2c} \stackrel{\beta}{\longrightarrow} \calo_{\cp{2k+1}}(1)^{\oplus c}. 
\end{equation}

Let $\wp:=\{y_k=0\}\subset\cp{2k+1}$ be a hyperplane, and note that the restriction $\tau|_{\wp}:\wp\to\hp{k}$ of the twistor map is surjective and provides a bijection between the affine subsets 
$$ \wp\supset\{x_k\ne0\}=\C^{2k} ~~ \mapsto ~~  \HH^k=\{q_k\ne0\}\subset\hp{k} $$
$$ (x_0,y_0,\cdots,x_{k-1},y_{k-1}) \mapsto (x_0+jy_0,\cdots,x_{k-1}+jy_{k-1}). $$
So lifting a quaternionic instanton connection on $\hp{k}$ to a mathematical instanton bundle on $\cp{2k+1}$ and restricting it to the hyperplane $\wp$ provides an injective map from the moduli space of quaternionic instantons on $\hp{k}$ to the moduli space of rank $2k$ holomorphic bundles on $\wp$ that arise as cohomology of a linear monad similar to the one in display \eqref{monad2}. For the case $k=1$, Donaldson used the so-called \textit{ADHM construction} to show that this map is actually and isomorphism \cite{D}; however, it is unknown to the authors whether the same is true for $k>1$. However, this observation can be regarded as a motivation to consider holomorphic bundles over even dimensional complex projective spaces which arise as the cohomology of a linear monad, see Definition \ref{defn:instantons} below.


\section{Instanton and perverse instanton sheaves}\label{section:perverse}

From this point onwards, we will shift to denoting sheaves by capital roman letters, while $\mathbb{P}^n$ means $\mathbb{CP}^n$, as it is more usual in algebraic geometry.

We start by recalling the characterization of the family of sheaves on projective spaces that can be represented as the cohomology of a monad as in display \eqref{monad2}.

\begin{theorem}{\label{thm:monadic presentation}}
A torsion free sheaf $E$ on $\pn$ is the cohomology of a monad of the form
$$ U\otimes\opn(-1) \stackrel{\alpha}{\lra} V\otimes\opn \stackrel{\beta}{\lra} W\otimes\opn(1) , $$
where $U$, $V$ and $W$ are vector spaces, if and only if
\begin{enumerate}
\item[(1)] $H^0(E(-1)) = H^n(E(-n)) = 0$ for $n\ge2$;
\item[(2)] $H^1(E(-2)) = H^{n-1}(E(1-n)) = 0$ for $n\ge3$;
\item[(3)] $H^p(E(k)) = 0$ for every $k$ and $2\le p\le n-2$, when $n\ge4$.
\end{enumerate}
\end{theorem}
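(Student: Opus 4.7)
The plan is to prove both directions of the biconditional. For the direction ``monad implies vanishings,'' I would split the given monad into the two short exact sequences of its display:
\[ 0 \to U \otimes \opn(-1) \to K \to E \to 0, \qquad 0 \to K \to V \otimes \opn \to W \otimes \opn(1) \to 0. \]
Twisting each by $\opn(k)$ and chasing the associated long exact sequences, every vanishing in (1), (2), (3) reduces to standard line bundle cohomology on $\pn$, where $H^p(\opn(k))$ is nonzero only for $(p,k)$ with $p=0$, $k\ge 0$ or $p=n$, $k\le -n-1$. A case-by-case check of the specific twists appearing in (1)--(3) yields all the vanishings directly.

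The converse direction is the substantial one, and I would attack it with the Beilinson spectral sequence
\[ E_1^{p,q} = H^q(E(p)) \otimes \Omega^{-p}_{\pn}(-p), \qquad -n \le p \le 0, \ 0 \le q \le n, \]
which converges to $E$ in total degree zero and to $0$ elsewhere. First I would extend the hypotheses in (1) using torsion-freeness: for a general hyperplane $H$, the sequence $0 \to E(k-1) \to E(k) \to E|_H(k) \to 0$ yields an injection $H^0(E(k-1))\into H^0(E(k))$ and, since $H^n(E|_H(k))=0$, a surjection $H^n(E(k-1))\onto H^n(E(k))$. Hence $H^0(E(-1))=0$ propagates to $H^0(E(k))=0$ for all $k\le -1$ and $H^n(E(-n))=0$ propagates to $H^n(E(k))=0$ for all $k\ge -n$. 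Combined with (3), the nontrivial entries of the $E_1$ page are then confined to rows $q = 0,1,n-1$, with only the entry $(0,0)$ surviving in row $0$; hypothesis (2) further annihilates the entries at $(p,q)=(-2,1)$ and $(1-n,n-1)$.

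Because the spectral sequence abuts to $E$ on the anti-diagonal $p+q=0$ and to zero elsewhere, the differentials $d_r$ for $r=1,2,n-1,n$ are highly constrained, and a careful diagram chase collapses the $E_1$ page to a three-term complex at $E_\infty$ whose total cohomology recovers $E$. To conclude, I would apply the Euler sequence $0 \to \Omega^1_{\pn}(1) \to \opn^{\oplus n+1} \to \opn(1) \to 0$ and its Koszul extensions to rewrite the surviving $\Omega^{-p}_{\pn}(-p)$-terms as combinations of $\opn(-1)$, $\opn$, and $\opn(1)$, and reassemble the resulting bicomplex into the desired linear monad. Injectivity of the leftmost map is forced by torsion-freeness of $E$ and the left-exactness of Beilinson's resolution, while surjectivity of the rightmost map is inherited from its right-exactness.

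The principal obstacle is the spectral sequence bookkeeping: tracking the surviving entries in rows $q=1$ and $q=n-1$, and verifying that the convergence constraints (together with hypothesis (2)) leave precisely the three column-types $\opn(-1)$, $\opn$, $\opn(1)$ after translation via Euler/Koszul, with no spurious terms of other twists. A secondary technical point is the explicit identification of $U$, $V$, $W$ with cohomology groups of $E$ (for example $W\cong H^1(E(-1))$ and $U$ dual to a similar group), which is needed to make the reassembly into a linear monad canonical.
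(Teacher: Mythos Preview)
Your overall architecture matches the paper's (and \cite{J-inst}): the direction ``monad $\Rightarrow$ vanishings'' is exactly the long-exact-sequence chase you describe, and the converse is Beilinson after extending the hypotheses via restriction to hyperplanes.

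There is, however, a genuine gap in your converse argument. You invoke the Beilinson spectral sequence in the form $E_1^{p,q}=H^q(E(p))\otimes\Omega^{-p}_{\pn}(-p)$, whose terms are the bundles $\Omega^{j}(j)$, not line bundles; with the vanishings you list, one of the surviving entries is $H^1(E(-1))\otimes\Omega^{1}(1)$ in total degree $0$. Your plan to then ``rewrite the surviving $\Omega^{-p}(-p)$-terms as combinations of $\opn(-1),\opn,\opn(1)$ via Euler/Koszul'' is not a well-defined operation: substituting the Euler resolution for $\Omega^{1}(1)$ replaces a single term by a two-term complex, and there is no automatic cancellation that collapses the resulting bicomplex back to a three-term linear monad. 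You also only propagate the $H^0$ and $H^n$ vanishings; to control rows $q=1$ and $q=n-1$ of the $E_1$ page you equally need $H^1(E(k))=0$ for all $k\le -2$ and $H^{n-1}(E(k))=0$ for all $k\ge 1-n$, which requires its own hyperplane-restriction argument beyond what (2) alone gives.

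The route taken in \cite{J-inst}, summarized in the paper as ``the Beilinson spectral sequence, after some further cohomological vanishings are established,'' reverses the order in which Beilinson and Euler/Koszul are used. One first propagates all four families of vanishings $H^0,H^1,H^{n-1},H^n$ along the twists, then tensors the short exact sequences $0\to\Omega^{j}(j)\to\bigwedge^{j}V\otimes\opn\to\Omega^{j-1}(j)\to 0$ with $E$ to deduce the vanishings of $H^q(E\otimes\Omega^{j}(j))$ that serve as input to the \emph{other} Beilinson spectral sequence
\[
E_1^{p,q}=H^q\bigl(E\otimes\Omega^{-p}_{\pn}(-p)\bigr)\otimes\opn(p).
\]
This version already has line bundles as its building blocks, and once its $E_1$ page is shown to be concentrated in a single row with exactly three adjacent nonzero columns, the $d_1$ differentials \emph{are} the linear monad, with injectivity of $\alpha$ and surjectivity of $\beta$ forced by convergence and no post-processing required. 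So the Euler/Koszul step belongs before Beilinson, applied at the level of cohomology groups, rather than after, applied to the complex.
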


The proof is given in \cite[Theorem 3]{J-inst}; the \textit{if} part is an application the Beilinson spectral sequence, after some further cohomological vanishings are established; the converse claim is an easy calculation with long exact sequences.

Since the locally free sheaves on odd and on even dimensional complex projective spaces arising from quaternionic instantons on quaternionic projective spaces have $c_1=0$, the following definition definition was proposed in \cite{J-inst}.

\begin{definition} \label{defn:instantons}
An instanton sheaf on $\pn$ is a torsion free sheaf $E$ with $c_1(E)=0$ satisfying the following cohomological conditions
\begin{enumerate}
\item[(1)] $H^0(E(-1)) = H^n(E(-n)) = 0$ for $n\ge2$;
\item[(2)] $H^1(E(-2)) = H^{n-1}(E(1-n)) = 0$ for $n\ge3$;
\item[(3)] $H^p(E(k)) = 0$ for every $k$ and $2\le p\le n-2$, when $n\ge4$.
\end{enumerate}
The number $c:=h^1(E(-1))=c_2(E)$ is called the \textit{charge} of $E$; this is also often called the \textit{quantum number} of $E$. The trivial bundle $\opn^{\oplus r}$ is regarded as an instanton sheaf of charge 0.
\end{definition}

Mathematical instanton bundles, as defined in the previous section, are simply locally free instanton sheaves of rank $2k$ on $\mathbb{P}^{2k+1}$. Therefore, the previous definition generalizes the notion of mathematical instanton bundles to include possibly non locally free sheaves of arbitrary rank on projective spaces of any dimension.

The first observation is that instanton sheaves of rank $r$ less than $n-1$ on $\pn$ are necessarily trivial, i.e. $\opn^{\oplus r}$ \cite[Corollary 6]{J-inst}. Moreover, notice that we do not impose any condition on the stability of $E$; in fact, letting $E$ be a non trivial instanton sheaf of rank $r$ on $\pn$, one can show that:
\begin{enumerate}
\item if $E$ is reflexive and $r\ge n-1$, then $E$ is $\mu$-semistable;
\item if $E$ is locally free and $r\ge 2n-1$, then $E$ is $\mu$-semistable.
\item when $r=2$ and $n=3$, then $E$ is Gieseker stable.
\end{enumerate}
The first two claims are contained in \cite[Theorem 15]{J-inst}, while the third was established in \cite[Theorem 4]{JMaT}. 

Thus, in general, it is not in principle clear how to construct a moduli space of instanton sheaves of arbitrary rank and charge. This issue is addressed in \cite{HLa,HJM} using the ADHM construction of framed instanton bundles on $\pn$, and more recently in \cite{JS} using representations of quivers. Let us comment on both approaches.

\subsection{The ADHM construction and perverse instanton sheaves}

Let $V$ and $W$ be vector spaces of dimension $c$ and $r$, respectively, and consider matrices
$$ A,B \in \End(V)\otimes H^0(\op{n-2}(1)) $$
$$ I \in\Hom(W,V)\otimes H^0(\op{n-2}(1)) ~~,~~ J \in\Hom(V,W)\otimes H^0(\op{n-2}(1)) $$
where $n\ge2$; these are the so-called \textit{ADHM matrices}. Let $\mathbb{X}_n(r,c)$ denote the set of all ADHM matrices as above satisfying the \textit{ADHM equation}:
$$ \mathcal{X}_n(r,c) := \Big\{ (A,B,I,J) ~\Big|~ [A,B]+IJ=0 \Big\} $$
The group $GL(V)$ acts on $\mathcal{X}_n(r,c)$ as follows
$$ g\cdot(A,B,I,J) = (gAg^{-1},gBg^{-1},gI,Jg^{-1}). $$
One can then consider the GIT quotient
$$ \mathcal{F}_n(r,c) := \mathcal{X}_n(r,c)/\!\!/ GL(V) ; $$
a quadruple $(A,B,I,J)\in\mathcal{X}_n(r,c))$ is GIT stable if there is no proper subspace $S\subset V$ for which the inclusions $A(S),B(S),I(W)\subset S\otimes H^0(\op{n-2}(1))$, see \cite[Section 2.3]{HJM} and \cite{HLa} for the case $n=3$.

Each point $(A,B,I,J)\in\mathbb{X}_n(r,c)$ can be used to construct a complex of sheaves on $\pn$ as follows, where $X=(A,B,I,J)$:
\begin{equation} \label{eq:fpi}
E^\bullet_{X} ~:~ V\otimes\opn(-1) \stackrel{\alpha}{\longrightarrow} \big( V\oplus V\oplus W\big) \stackrel{\beta}{\longrightarrow} V\otimes\opn(1)
\end{equation}
where the maps $\alpha$ and $\beta$ are given by
$$ \alpha = \left( \begin{array}{c}
A + x\mathbf{1}_V \\
B + y\mathbf{1}_V \\
J
\end{array} \right) ~~ ~~
\beta = \left( \begin{array}{lcr}
-B - y\mathbf{1}_V &
A + x\mathbf{1}_V &
I
\end{array} \right) . $$
To be more clear, let $[z_0:\cdots:z_{n-2}]$ be homogeneous coordinates on $\mathbb{P}^{n-2}$ and $[z_0:\cdots:z_{n-2}:x:y]$ denote homogeneous coordinates on $\pn$. Then the first line of the morphism $\alpha$ can be written in the following way
$$ A_0z_0 + \cdots + A_{n-2}z_{n-2} + x\mathbf{1}_V, $$
where $\mathbf{1}_V$ denotes the identity in $\End(V)$; the other entries of $\alpha$ and $\beta$ can be interpreted in a similar way.

Notice that $\beta\alpha=0$ precisely because the ADHM equation holds. Moreover, $\alpha$ is injective, while $\coker\beta$ has codimension at least 2, since $\beta$ is surjective along the line $\ell=\{z_0=\cdots=z_{n-2}=0\}$; in fact, $\beta$ is surjective if and only if for each $p\in\p{n-2}$, there is no proper $S\subset V$ for which the inclusions $A(p)(S),B(p)(S),I(p)(W)\subset S$, a condition that implies the GIT stability of $X=(A,B,I,J))$.

Finally, two complexes $E_X$ and $E_{X'}$ are isomorphic if and only if $X'=g\cdot X$.

These observations motivates the following definition.

\begin{definition} \label{defn:perverse}
A \textit{perverse instanton sheaf} is an object $E\in D^b(\pn)$ quasi-isomorphic to a complex of the form
$$ \opn(-1)^{\oplus c} \longrightarrow \opn^{\oplus r+2c} \longrightarrow \opn(1)^{\oplus c} $$
satisfying the following conditions
\begin{itemize}
\item $\calh^{p}(E)=0$ for $p\ne0,1$;
\item $\calh^0(E)$ is a torsion free sheaf;
\item $\calh^1(E)$ is a torsion sheaf of codimension at least 2.
\end{itemize}
Note that $r=\rk\big(\calh^0(E)\big)$, and this is called the rank of $E$; a \textit{rank 0 instanton sheaf} is just a perverse instanton sheaf of rank 0. The integer $c=\ch_2(E)$ is called the \textit{charge} of $E$.

In addition, a \textit{framing} on $E$ is a choice of isomorphism $\varphi:\calh^0(E)|_{\ell}\stackrel{\sim}{\to}\calo_{\ell}^{\oplus r}$. The pair $(E,\varphi)$ is called a\textit{ framed perverse instanton sheaf}.
\end{definition}

Therefore, for any $X\in\mathcal{X}_n(r,c)$ the complex $E_X$ presented in display \eqref{eq:fpi} is a perverse instanton sheaf of rank $r$ and charge $c$, equipped with a framing $\varphi:\calh^0(E)|_{\ell}\stackrel{\sim}{\to}W\otimes\calo_{\ell}$. The GIT quotient $\calf_n(r,c)$ can then be interpreted as the moduli space of \textit{GIT stable} framed perverse instanton sheaves of rank $r$ and charge $c$.

Every instanton sheaf in the sense of Definition \ref{defn:instantons} is a perverse instanton sheaf with $\calh^1(E)=0$; and every perverse instanton sheaf with $\calh^1(E)=0$ is just an instanton sheaf. Therefore, every framed instanton sheaf is semistable, in the sense that it corresponds to a GIT semistable ADHM datum.

The main example of a perverse instanton sheaf that is not a sheaf is the derived dual of a non locally free instanton sheaf. In general, the $0^{\rm th}$-cohomology of a perverse instanton sheaf may not be an instanton sheaf.

\subsection{Instantons as representations of a quiver}\label{sec:repQuiver}

The information contained in a linear monad of the form
\begin{equation} \label{monad3}
V\otimes\opn(-1) \stackrel{\alpha}{\longrightarrow} W\otimes\opn \stackrel{\beta}{\longrightarrow} U\otimes\opn(1)
\end{equation}
can be neatly packaged as a representation of the quiver
\begin{equation} \label{Q}
\mathbf{Q} := \left\{\begin{tikzcd}
\underset{-1}{\bullet} \arrow[rr,bend left,"\alpha_0"] \arrow[rr,bend right,swap,"\alpha_n"] &\vdots & \underset{0}{\bullet} \arrow[rr,bend left,"\beta_{0}"] \arrow[rr,bend right,swap,"\beta_n"] &\vdots &\underset{1}{\bullet}
\end{tikzcd}\right\}
\end{equation}
with $n+1$ arrows between each vertex, satisfying the relations
\begin{equation}\label{eq:relations}
\beta_j\alpha_i+\beta_i\alpha_j=0 ~~{\rm with}~~ 0\le i,j \le n.
\end{equation}
Indeed, we place the vector spaces $V$, $W$ and $U$ on the vertices $-1$, $0$, and $1$, respectively. Set $[x_0:\dots:x_n]$ as homogeneous coordinates on $\pn$; the morphisms $\alpha$ and $\beta$ can then be written as follows
$$ \alpha = \sum_{i=0}^n A_ix_i ~~{\rm and} ~~ \beta = \sum_{i=0}^n B_ix_i $$
where $\alpha_i\in\Hom(V,W)$ and $\beta_i\in\Hom(W,U)$. To complete the representation of the quiver $\mathbf{Q}$, we attach the matrices $A_i$ to the arrows $\alpha_i$, while the matrices $B_i$ are attached to the matrices $\beta_i$. Finally, the fact that $\beta\alpha=0$ implies that the relations in display \eqref{eq:relations} are satisfied. The injectivity of $\alpha$ and surjectivity of $\beta$ impose further (open) conditions on the set of representations of $\mathbf{Q}$ that come from linear monads; further details and generalizations can be found in \cite{JP} and in \cite{JS}.

Turning back our attention to the moduli space of instanton sheaves, we observe that the dimension vector of a representation of $\mathbf{Q}$ associated to the monad for an instanton sheaf of rank $r$ and charge $c$ is given by $(c,r+2c,c)$. One can then consider the King moduli space $\calr_\theta(r,c)$ of $\theta$-semistable representations of $\mathbf{Q}$ with fixed dimension vector $(c,r+2c,c)$; here, the stability parameter $\theta$ is given by
$$ \theta = \big( \alpha , -(\alpha+\gamma)\frac{c}{r+2c} , \gamma \big) ~~{\rm with}~~ \alpha,\gamma\in\R.$$
It is not difficult to see that $\calr_\theta(r,c)$ is empty whenever $\alpha>0$ and $\gamma<0$, see \cite[Lemma 7]{JS}.

The case of rank 2 instanton sheaves was studied in detail in \cite{JS}. One can show that if $E$ is an instanton sheaf, then there is $\theta$ such that the corresponding representation of $\mathbf{Q}$ is $\theta$-stable \cite[Proposition 8]{JS}; moreover, there is a wall in the $\alpha\gamma$-plane that destabilizes every representation corresponding to a non locally free instanton sheaf, so that perverse instanton sheaves do correspond to certain $\theta$-stable representation of $\mathbf{Q}$.

In summary, there are at least two ways to construct reasonable (i.e., projective) moduli spaces of instantons sheaves of arbitrary rank and charge: via the ADHM construction, or via moduli spaces of representations of quivers. However, both constructions include more complicated objects in the derived category of sheaves, like the perverse instantons sheaves considered in Definition \ref{defn:perverse}.


\section{Instanton sheaves on Fano threefolds}\label{section:Fano}

In section \ref{section:perverse}, we saw how to generalize the ``classical" notion of instanton extending the definition to torsion free sheaves of arbitrary rank and even to objects belonging to the derived category $D^b(\pn)$. Another possible direction is to construct instantons on projective varieties beside projective spaces. For the particular cases of Fano threefolds of Picard rank one, this was done in \cite{F,K,CJ}.

\subsection{Instanton bundles on Fano threefolds}
The rank 2 locally free instantons on the Fano threefolds of Picard rank one are the main subject  \cite{F}. We summarize here the main results of Faenzi's work.
To begin with we consider a Fano threefold $X$ of Picard rank one and we denote by $H_X$ the ample generator of $\Pic(X)\simeq\mathbb{Z}$. We then write the anticanonical class $K_X$ as $K_X=-i_X H_X$. We have that $i_X$ is a positive (since $X$ is Fano) integer, referred to as the \textit{index} of $X$, that takes values in $i_X\in \{1,2,3,4\}$. 
We set $i_X=2q_X+e_X$, where $q_X$ and $e_X$ are integers such that $q_X\ge 0$ and $0\leq e_X\leq 1$. The definition of instanton presented in \cite{F} is the following:
\begin{definition}\label{defn:inst-F}
An instanton bundle on $X$ is a rank 2 stable bundle with $c_1=-e_X$, and such that 
$$ E\simeq E^*(-e_X), \hspace{3mm} H^1(E(-q_X))=0.$$
\end{definition}
\noindent Note that the \textit{instantonic condition} $H^1(E(-q_X))=0$ is the analogue of the vanishing $H ^1(E(-2))$ holding for instantons on $\p3$.
Using the Serre's correspondence and the stability assumption it can be shown that the  instantons satisfy the following cohomological vanishing: 
\begin{lemma}\label{lem:vanish_bundle}
If $E$ is a rank 2 instanton bundle on $X$ we then have:
\begin{equation}\label{orth-bundle}
    H^i(E(-q_X))=0\ \text{and}\  \Ext^i(E,\calo_X(-q_X-e_X))=0,\ \text{for all}\ i;
\end{equation}
and $H^1(E(-q_X-t))=0, \: H^2(E(-q_X+t))=0$,  for all  $t\ge 0$.
\end{lemma}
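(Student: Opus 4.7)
The plan is to derive all the stated vanishings from the single instantonic input $H^1(E(-q_X))=0$, using the self-duality $E^*\simeq E(e_X)$ and Serre duality on $X$, supplemented by an inductive argument on twists via restriction to a hyperplane section.

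I would first handle the vanishings at the central twist $-q_X$ and the $\Ext$-vanishings. $H^0(E(-q_X))=0$ follows from $\mu$-stability: a nonzero section would saturate to a sub-line bundle $\calo_X(a)\hookrightarrow E$ with $a\geq q_X$, but $\mu(\calo_X(a))\geq q_X > -e_X/2 = \mu(E)$ (since $i_X=2q_X+e_X\geq 1$), contradicting stability. For $i=2,3$, using $K_X\simeq\calo_X(-i_X H_X)$, $E^*\simeq E(e_X)$, and the identity $e_X+q_X-i_X=-q_X$, Serre duality yields the palindromic isomorphism
\[
H^i(E(-q_X))\simeq H^{3-i}(E(-q_X))^*,
\]
giving $H^2\simeq(H^1)^*=0$ and $H^3\simeq(H^0)^*=0$. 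The $\Ext$-vanishings are then immediate: $\Ext^i(E,\calo_X(-q_X-e_X))\simeq H^i(E^*(-q_X-e_X))\simeq H^i(E(-q_X))=0$.

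Next, for the extended range, the same Serre-duality/self-duality identification yields $H^1(E(-q_X-t))\simeq H^2(E(-q_X+t))^*$, so the two stated vanishings are equivalent and it suffices to prove $H^1(E(-q_X-t))=0$ for $t\geq 0$ by induction on $t$, the base case being handled above. For the inductive step, I would pick a smooth surface $Y\in|H_X|$ via Bertini and tensor the structure sequence of $Y\subset X$ with $E(-q_X-t)$ to obtain
\[
0\to E(-q_X-t-1)\to E(-q_X-t)\to E|_Y(-q_X-t)\to 0.
\]
The long exact cohomology sequence, combined with $H^0(E(-q_X-t))=0$ (stability) and the inductive hypothesis $H^1(E(-q_X-t))=0$, collapses to an isomorphism $H^1(E(-q_X-t-1))\simeq H^0(E|_Y(-q_X-t))$; hence the step reduces to the vanishing $H^0(E|_Y(m))=0$ for $m\leq -q_X$ on the surface $Y$.

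The hard part is precisely this $H^0$-vanishing on the hyperplane restriction. The cleanest route is to invoke the Mehta--Ramanathan/Flenner restriction theorem to get $\mu_{H_Y}$-(semi)stability of $E|_Y$ for a sufficiently general smooth $Y\in|H_X|$; the slope argument employed above on $X$ then replays on $Y$, yielding $H^0(E|_Y(m H_Y))=0$ for $m\leq 0$, which covers $m\leq -q_X$. Should the restriction theorem fail to apply directly for $k=1$ in some low-index Fano case, alternatives are to work in $|kH_X|$ for a larger $k$ (the induction on $t$ is insensitive to the choice of $Y$), to restrict further to a smooth complete-intersection curve $C=Y\cap Y'$ and use a Grauert--M\"ulich-type bound on the splitting type of $E|_C$, or to exploit the Serre correspondence arising from a section of a suitable twist of $E$ to transfer the required nullity to the cohomology of the ideal sheaf of an auxiliary curve in $X$, where Kodaira-type vanishings on $X$ reduce it to concrete numerical data.
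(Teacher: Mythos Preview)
Your approach is correct for the first block of vanishings (those at the central twist $-q_X$ and the $\Ext$-groups): the combination of stability, self-duality $E^*\simeq E(e_X)$, and Serre duality yields the palindromic identity $H^i(E(-q_X))\simeq H^{3-i}(E(-q_X))^*$ exactly as you wrote, and the $\Ext$-computation is immediate.

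For the extended range your restriction-to-a-hyperplane induction also works, but the restriction theorem you should invoke is Maruyama's (a $\mu$-semistable torsion-free sheaf of rank $r<\dim X$ restricts semistably to a general member of $|H_X|$), which applies directly here since $\rk E=2<3$; Mehta--Ramanathan and Flenner are high-degree results and would indeed force the awkward workarounds you sketch. With Maruyama in hand, $E|_Y$ is $\mu$-semistable for general $Y\in|H_X|$, and then $H^0(E|_Y(-q_X-t))=0$ follows from the slope comparison $q_X+t\geq 0>-e_X/2$ (strict because $i_X\geq 1$), closing the induction cleanly. Your remark that ``the induction on $t$ is insensitive to the choice of $Y$'' is not quite right if one passes to $|kH_X|$: the step then jumps by $k$, and the missing base cases $t=1,\dots,k-1$ would still need to be supplied.

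This is a genuinely different route from the one the paper indicates, which (following Faenzi) goes through the Hartshorne--Serre correspondence: one takes a section of a suitable positive twist of $E$, obtains a locally complete intersection curve $C\subset X$, and reads off the required cohomology of $E$ from the ideal-sheaf sequence of $C$ together with Kodaira-type vanishing on $X$. Your argument trades that geometric input for a purely cohomological/restriction-theoretic one; it is more uniform across the four index cases and avoids any analysis of the curve $C$, at the cost of depending on a restriction theorem. The Serre-correspondence approach, on the other hand, does double duty: it simultaneously produces the curves used in the existence and deformation arguments elsewhere in \cite{F}, which is presumably why it is the preferred method there.
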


\subsection{Non-emptiness of moduli spaces of instantons }
The first main result of \cite{F}, concerns the non-emptiness of the moduli space $\cali(n)$ of instanton bundles of charge $n$ on all Fano threefolds $X$ of Picard rank one and index $i_X>1$ and on non-hyperelliptic Fano threefolds of index one (which means that $-K_X$ is very ample) containing a line $\ell\subset X$ with normal bundle $\calo_{\ell}\oplus \calo_{\ell}(-1)$.
\begin{theorem}
The moduli space $\cali(n)$ has a generically smooth irreducible component whose dimension is the number $\delta$ below: 

\begin{center}
    \begin{tabular}{@{}l|cccc@{}}
\toprule
   $i_X$  & $4$ & $3$ & $2$ & $1$ \\ \hline
$\delta$ &  $8n-3$  & $6n-6$ & $4n-3$ & $2n-g_X-2$\\

 \bottomrule
\end{tabular}
\end{center}
and $\cali(n)$ is empty when $i_X=2$ and $n=1$, and when $i_X=1$ and $2n<g_X+2$.

\end{theorem}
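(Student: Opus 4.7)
The plan is to use the Hartshorne--Serre correspondence to translate the existence of an instanton bundle into the existence of a smooth subcanonical curve $C \subset X$ of controlled degree, compute the dimension via deformation theory using Lemma \ref{lem:vanish_bundle}, and rule out the two degenerate cases by numerical obstructions.

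For an instanton $E$ on $X$ with $c_1(E) = -e_X$, the twist $E(q_X + e_X)$ has $c_1 = i_X$, so $\det(E(q_X + e_X)) \otimes \omega_X \simeq \calo_X$. First I would show, via Riemann--Roch and the instantonic vanishings, that a generic such $E$ admits a section whose zero locus is a smooth connected curve $C \subset X$ satisfying $\omega_C \simeq \calo_C$ by adjunction, i.e.\ an arithmetic-genus-one curve of a degree $d = d(n, i_X)$ determined by Chern-class matching. Conversely, given such a $C$ satisfying the natural cohomological conditions, the Hartshorne--Serre construction produces an extension
$$ 0 \to \calo_X(-q_X - e_X) \to E \to \IC(q_X) \to 0 $$
whose middle term is a rank 2 bundle with the required Chern classes; the self-duality $E \simeq E^*(-e_X)$ is built into the extension, and the instantonic vanishing $H^1(E(-q_X)) = 0$ translates into a cohomological condition on $\IC$ that can be checked directly.

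Next, one must establish the existence of at least one suitable curve $C$ of each admissible degree on each Fano threefold in the list. For $i_X \ge 2$ this can usually be done either by explicit complete-intersection constructions or by smoothing unions with lines or conics, together with a dimension count on the appropriate Hilbert scheme. For $i_X = 1$, the hypothesis that $X$ contains a line $\ell$ with normal bundle $\calo_\ell \oplus \calo_\ell(-1)$ is used inductively: one attaches $\ell$ to a curve $C_0$ of one lower charge, analyses the normal sheaf of the nodal union $C_0 \cup \ell$ using the hypothesis on $N_{\ell/X}$, and smooths. Stability of the resulting bundle then follows from the extension structure together with a standard $\mu$-slope comparison using $c_1(E) = -e_X$.

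For dimension and generic smoothness, Lemma \ref{lem:vanish_bundle} together with Serre duality and the self-duality $E \simeq E^*(-e_X)$ yields $\Ext^2(E, E) = 0$, hence the moduli space is smooth at $[E]$ with tangent space $\Ext^1(E, E)$, and $\dim \cali(n) = 1 - \chi(E, E)$ by stability. A Hirzebruch--Riemann--Roch computation with $\ch(E)$, $\td(X)$, and the standard invariants of $X$ then yields the four stated values of $\delta$. The non-existence statements follow from numerical obstructions: when $i_X = 2$ and $n = 1$ the hypothetical curve $C$ would have degree incompatible with any subscheme of $X$, while for $i_X = 1$ and $2n < g_X + 2$ the formula itself gives $\delta < 0$, precluding a generically smooth component. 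The principal obstacle is the existence step in index one: producing a smoothable reducible curve through $\ell$ and simultaneously verifying that the corresponding Hilbert-scheme component has the expected dimension and yields stable bundles; the vanishing of $\Ext^2(E,E)$ in index one also requires a careful combination of all vanishings in Lemma \ref{lem:vanish_bundle}, since the instantonic hypothesis there reduces to $H^1(E) = 0$ alone.
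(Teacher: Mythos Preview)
Your overall strategy---Serre correspondence for the base case, induction to raise the charge, Riemann--Roch for the dimension---is the right shape, but it diverges from the argument the paper sketches (following Faenzi) in one essential respect, and it contains two genuine gaps.

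\textbf{Difference of approach.} The paper's induction (see Remark~\ref{rmk:et}) runs on the \emph{bundle} side, not the curve side: given an instanton $F$ of charge $n-1$ and a general line $\ell\subset X$, one forms the elementary transformation
\[
0\lra E\lra F\lra \calo_{\ell}\lra 0,
\]
obtaining a non-locally-free instanton sheaf $E$ of charge $n$, and then shows that the general deformation of $E$ is locally free. Your proposal instead runs the induction on the Hilbert scheme, attaching $\ell$ to a curve $C_0$ and smoothing. The two pictures are related through Serre's construction, but they are not the same proof: the bundle-side argument avoids having to control the Hilbert scheme of curves directly and makes the deformation-theoretic step (unobstructedness at the constructed point) more transparent.

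\textbf{Gap in the smoothness argument.} You assert that Lemma~\ref{lem:vanish_bundle} plus self-duality gives $\Ext^2(E,E)=0$. It does not. The lemma controls $H^i(E(-q_X))$ and $\Ext^i(E,\calo_X(-q_X-e_X))$, whereas $\Ext^2(E,E)\cong H^2(E\otimes E(e_X))$ is a different group. In fact the theorem only claims \emph{generic} smoothness: one must verify $\Ext^2(E,E)=0$ at the specific $E$ produced by the construction, and this uses the geometry of that construction (in Faenzi's argument, the unobstructedness of the elementary transformation at a general pair $(F,\ell)$), not the instantonic vanishings alone.

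\textbf{Gap in the non-existence argument.} For $i_X=1$ and $2n<g_X+2$, the inequality $\delta<0$ does not by itself force $\cali(n)=\emptyset$: a negative expected dimension rules out a generically smooth component of that dimension, but not an obstructed component. One needs an actual cohomological obstruction (a group that Riemann--Roch forces to have negative dimension). Similarly, for $i_X=2$, $n=1$, the clean argument is that Lemma~\ref{lem:cohom-inst} gives $h^1(E)=n-2=-1$, which is absurd; your ``degree incompatible with any subscheme'' is too vague as stated.
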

The integer $g_X$ appearing in the statement of the theorem is the \textit{genus} of $X$; we recall that this parameter, defined on Fano threefolds of index one, is the genus of a general codimension 2 plane section of $X$.
\begin{remark}\label{rmk:et}\textbf{Existence of instantons on Fano threefolds of index 2.}
In the particular case of Fano threefolds of index 2, the proof of the non-emptiness of the moduli spaces $\cali(n)$, for $n> 2$ relies on the construction of divisors in $\overline{\cali(n)}$ parameterizing non-locally free sheaves $E$ that still satisfy the cohomological vanishing $H^{\bullet}(E(-1))=0$ (note that all these conditions are, by semicontinuity, open).
To be more precise, these sheaves $E$ are \textit{elementary transformations} of instanton bundles $F$ of charge $n-1$ along structure sheaves $\calo_{\ell}$ of lines ${\ell}\subset X$. This means that the sheaves $E$ fit into short exact sequences of the form:
\begin{equation}\label{singular-line}
    0\lra E \lra F\lra \calo_{\ell}\lra 0
    \end{equation}
(from which we learn in particular that $F\simeq E^{**}$ and $\Sing(F)=\ell$).
The non-emptiness of $\cali(n)$ can then be proved applying an induction argument. To begin with we show the existence of a generically smooth irreducible component of $\cali(2)$ (this is done applying the well-known Serre's correspondence relating locally complete intersection curves on $X$ with rank 2 bundles). The induction step consists then in showing that for a general pair $(F, \ell)$ with $[F]\in\cali(n-1)$ and $\ell\subset X$ a line on $X$, the general deformation of a sheaf $E$ fitting into a short exact sequence of the form (\ref{singular-line}) is an instanton bundle of charge $n$. 
This procedure suggested that, more generally, the investigation on families of rank 2 non-locally free sheaves $E$ with $H^i(E(-q_X))=0,$ might contribute to get a better understanding of $\overline{\cali(n)}$. 
\end{remark}
\

\subsection{Monadic representations of instantons}
Faenzi focuses then his attention on instantons defined over Fano threefolds $X$ such that $H ^3(X)=0$. On these threefolds, the instanton bundles share another common feature with instantons on the projective space: they can still be represented as cohomology of monads. This property is remarkable for the following reasons: in the first place it provides us with a ``recipe" to construct instantons, in the second place it allows us to construct their moduli as GIT quotients. As it turns out, the condition $H^3(X)=0$ is indeed equivalent to the fact that $X$ admits a full strong exceptional collection; this allows to prove analogues of the Beilinson's theorem on $\pn$. More specifically, on a Fano threefold $X$ such that $H^3(X)=0$, there exist vector bundles $\cale_i, \ i=0,\dots, 3$, satisfying
$$ \cale_0\simeq\calo_X(-q_X-e_X), \hspace{2mm} \cale_3^*(-e_X)\simeq\cale_1, \hspace{2mm}
\cale_2^*(-e_X)\simeq\cale_2$$
and such that $D^b(X)=\langle \cale_0,\cale_1,\cale_2,\cale_3\rangle$.
Denoting then by $\langle\calf_0,\calf_1,\calf_2,\calf_3\rangle$ the dual collection, we have that each coherent sheaf $E$ on $X$ is the cohomology of a complex $\calc^{\bullet}_{E}$
with $\calc^{j}_E=\oplus_i H^i(F\otimes \calf_{j-i+3})\otimes \cale_{j-i+3}$ 
where the index $i$ runs between $\max\{0,j\}$ and $\min\{3,j+3\}$.
In the particular case in which $E$ is an instanton bundle, the complex $\calc^{\bullet}_E$ is a monad whose terms can be described, in further detail, as follows.
For an integer $n$ let us fix vector spaces $I$ and $W$ whose dimensions are subjected to the following constraints:
\begin{center}\label{charge-monads}
    \begin{tabular}{@{}lccc@{}}
\toprule
   $i_X$& $n$& $\dim(I)$ & $\dim(W)$ \\ \midrule
$4$ &  $n\ge 1$  & $n$ & $2n+2$ \\
$3$ & $n\ge 2$   & $n-1$ & $n$ \\
$2$ & $n \ge 2$   & $n$ & $4n+2$\\
$1$ & $n \ge 8$   & $n-7$ & $3n-20$ \\
 \bottomrule
\end{tabular}
\end{center}
and let us denote by $U$ the vector space $U:=\Hom(\cale_2,\cale_3).$
We fix then an isomorphism $D:W\to W^*$ such that $D^t=(-1)^{e_X+1}D$ and we consider the locally closed subvariety $\overset{\circ}{\cald_{X,n}}$ of the vector space $\Hom(W^*\otimes \cale_2, I\otimes \cale_3)\simeq I\otimes W\otimes U$ defined as
$$\overset{\circ}{\cald_{X,n}}:=\{ A\in \Hom(W^*\otimes \cale_2, I\otimes \cale_3)\mid A\:D\:A^t=0 \ \text{and A is surjective}\}.$$
Finally, we write $G(W,D)$ for the symplectic group $Sp(W,D)$, or for the orthogonal group $O(W,D)$, depending on whether $e_X=0,1$; the group $G_n:=GL(I)\times G(W,D)$ acts then on $\overset{\circ}{\cald_{X,n}}$ via $(\zeta,\eta)\cdot A= (\zeta A \eta^t)$.
The second main result of \cite{F} is the following:
\begin{theorem}{\label{thm:monadFano}}
Let $X$ be a smooth Fano threefold of Picard rank one and such that $H^3(X)=0$. Let $I,\: W,\: D,\: \cale_i$ as above. Then an instanton $E$ of charge $n$ on $X$ is the cohomology of a monad of the form
$$ I^*\otimes \cale_1\xrightarrow{DA^t}W^*\otimes \cale_2\xrightarrow{A} I\otimes \cale_3,$$
and conversely the cohomology of such a monad is an instanton of charge $n$.
The moduli space $\cali(n)$ of instanton bundles of charge $n$ is isomorphic to the geometric quotient $\overset{\circ}{\cald_{X,n}}/G_n$.
\end{theorem}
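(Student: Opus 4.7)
The plan is to deduce the monadic presentation from the Beilinson-type spectral sequence associated to the full strong exceptional collection $\langle \mathcal{E}_0,\mathcal{E}_1,\mathcal{E}_2,\mathcal{E}_3\rangle$, whose existence is guaranteed by the hypothesis $H^3(X)=0$. For any coherent sheaf $E$ on $X$, the complex $\mathcal{C}_E^\bullet$ with $\mathcal{C}_E^j=\bigoplus_i H^i(E\otimes\mathcal{F}_{j-i+3})\otimes\mathcal{E}_{j-i+3}$ recovers $E$ as its cohomology. The first step is therefore to substitute the cohomological vanishings of Lemma \ref{lem:vanish_bundle} into this complex when $E$ is an instanton of charge $n$: the instantonic condition $H^\bullet(E(-q_X))=0$, together with the vanishings $H^1(E(-q_X-t))=H^2(E(-q_X+t))=0$ for $t\ge 0$ and the Serre-dual condition $\mathrm{Ext}^i(E,\calo_X(-q_X-e_X))=0$ (which are equivalent to the vanishing of $H^i(E\otimes\mathcal{F}_j)$ for most pairs $(i,j)$), collapse the Beilinson complex, leaving only three nonzero terms supported on $\mathcal{E}_1,\mathcal{E}_2,\mathcal{E}_3$. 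A Riemann--Roch / Hilbert-polynomial computation on $X$ then fixes the dimensions $\dim I$ and $\dim W$ exactly as tabulated, splitting into the four cases $i_X=1,2,3,4$.

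Next I would promote this complex to a genuine monad. The map $\beta:W^*\otimes\mathcal{E}_2\to I\otimes\mathcal{E}_3$ is an element $A$ of $\Hom(W^*\otimes\mathcal{E}_2,I\otimes\mathcal{E}_3)\simeq I\otimes W\otimes U$. Surjectivity of $A$ follows from the fact that, by construction, $\mathcal{H}^1(\mathcal{C}_E^\bullet)=0$ since $E$ is locally free; dually, injectivity of $\alpha$ follows from $\mathcal{H}^{-1}(\mathcal{C}_E^\bullet)=0$. The symmetric role played by the first and third terms is produced by the instanton duality $E\simeq E^*(-e_X)$: applying $\mathcal{H}{\it om}(-,\calo_X(-e_X))$ to $\mathcal{C}_E^\bullet$ and using the identifications $\mathcal{E}_0=\calo_X(-q_X-e_X)$, $\mathcal{E}_3^*(-e_X)=\mathcal{E}_1$ and $\mathcal{E}_2^*(-e_X)=\mathcal{E}_2$ yields a second monad whose cohomology is again $E$; the uniqueness (up to scalar) of the Beilinson resolution and Schur's lemma force the two to coincide, producing a nondegenerate pairing $D:W\to W^*$ with $D^t=(-1)^{e_X+1}D$, and showing $\alpha=DA^t$. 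The relation $\beta\alpha=0$ translates into $A\,D\,A^t=0$, so $A\in\overset{\circ}{\mathcal{D}_{X,n}}$.

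For the converse, given $A\in\overset{\circ}{\mathcal{D}_{X,n}}$, the complex $I^*\otimes\mathcal{E}_1\xrightarrow{DA^t}W^*\otimes\mathcal{E}_2\xrightarrow{A}I\otimes\mathcal{E}_3$ is by construction a monad: surjectivity of $A$ is the open condition defining $\overset{\circ}{\mathcal{D}_{X,n}}$, injectivity of $DA^t$ is then dual to it via $D$, and $\beta\alpha=0$ is the relation $A\,D\,A^t=0$. Its cohomology $E$ is a sheaf by standard monad theory, and one reads off from the exceptional collection that $c_1(E)=-e_X$, while the tautological vanishings $\Ext^\bullet(\mathcal{E}_i,\mathcal{E}_j)=0$ for $i>j$ imply $H^\bullet(E(-q_X))=0$; a Chern-character computation, using the data in the table, gives charge $n$. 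Stability follows from simplicity of $E$ (derived from the irreducibility of the monad under the $G_n$-action) together with the numerical invariants $c_1(E)=-e_X$ and rank $2$. Thus $E$ is an instanton of charge $n$.

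Finally, the moduli identification follows from the fact that the assignments $E\mapsto A$ and $A\mapsto\mathcal{H}^0(\mathrm{monad})$ are inverse up to the natural action: two monads have isomorphic cohomology if and only if they are isomorphic as complexes, and an isomorphism of such complexes is precisely an element of $G_n=GL(I)\times G(W,D)$ respecting the pairing $D$. The formula $(\zeta,\eta)\cdot A=\zeta A\eta^t$ encodes this change of bases, and freeness of the action on $\overset{\circ}{\mathcal{D}_{X,n}}$ (modulo finite stabilizers absorbed in the quotient) yields the geometric quotient $\mathcal{I}(n)\simeq\overset{\circ}{\mathcal{D}_{X,n}}/G_n$. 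The main obstacle I anticipate is the case-by-case bookkeeping in the first step: checking that, for each of the four possible indices $i_X$, the precise shape of the dual collection $\mathcal{F}_i$ and the Riemann--Roch calculation combine to give exactly the dimensions $\dim I$ and $\dim W$ in the table and only the three middle Beilinson terms survive; once this is in hand, the monad structure and the symmetry produced by $D$ are essentially formal.
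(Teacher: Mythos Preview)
The paper does not give a proof of this theorem: it is stated as ``the second main result of \cite{F}'' and quoted without argument, since the present article is a survey. Your sketch is therefore not comparable to anything in the paper itself, but it does follow the strategy of Faenzi's original proof --- Beilinson-type spectral sequence on the exceptional collection, cohomological collapse via Lemma~\ref{lem:vanish_bundle}, self-duality $E\simeq E^*(-e_X)$ producing the pairing $D$, and the GIT identification of the moduli space --- and is broadly correct as an outline.

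One point deserves care in the converse direction. You write that ``stability follows from simplicity of $E$ \dots\ together with the numerical invariants $c_1(E)=-e_X$ and rank $2$'', but simplicity alone does not imply $\mu$-stability for a rank $2$ bundle; one must check the Hoppe-type criterion $H^0(E)=0$ (resp.\ $H^0(E(e_X))=0$). In Faenzi's argument this vanishing is read off directly from the monad and the exceptional collection rather than deduced from simplicity, so you should replace that step by the cohomological computation you already set up. Apart from this, your anticipated obstacle --- the case-by-case verification that the dual collection $\mathcal{F}_i$ and Riemann--Roch yield exactly the tabulated dimensions for each $i_X$ --- is indeed where the bulk of the work lies, and it is genuinely a four-case check.
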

\subsection{Instanton sheaves on Fano threefolds.}
The main properties of rank 2 instanton bundles on Fano threefolds, illustrated in \cite{F}, appear as ``natural generalizations" of the properties of mathematical instantons on the projective space.
We might then wonder if something similar still happens if we extend our study to sheaves of arbitrary rank and that are not necessarily locally free; in other words we might try to adapt to the Fano threefolds besides $\p3$ the approach adopted by Jardim in \cite{J-inst}. 
This issue had been dealt in \cite{CJ} where the following definition of \textit{instanton sheaf} is presented (the notations adopted are the one we introduced in the previous section):
\begin{definition}\label{defn:inst-CJ}
Let $X$ be a Fano threefold of Picard rank one and index \mbox{$i_X=2q_X+e_X$}, where $q_X$, $e_X$ are integers such that $q_X\ge0$ and $0\le e_X\le 1$.
An \textit{instanton sheaf} $E$ on $X$ is a torsion free $\mu$-semistable sheaf with first Chern class $c_1(E)=-e_X$ and such that:
\begin{equation}
    H^1(E(-q_X))=H^2(E(-q_X))=0.
\end{equation}
The \textit{charge} of $E$ is defined to be $c_2(E)$. 
\end{definition}
\begin{remark}{\label{rmk:mu-unstable}}
Note that this definition appears to be more restrictive than the one adopted in \cite{J-inst} since this latter does not necessarily implies $\mu$-semistability (see e.g. \cite[Example 3]{J-inst}).
\end{remark}
Moving to this more general setting, some of the cohomological characterizations of instantons presented in Lemma \ref{lem:vanish_bundle} still hold:
\begin{lemma}\label{lem:vanish_bundle2}
Let $E$ be an instanton sheaf. Then:
$$H^i(E(-q_X))=\Ext^i(E,\calo_X(-q_X-e_X))=0 \ \ \text{for all}\ \ i.$$
\end{lemma}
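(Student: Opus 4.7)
The plan is to upgrade the two given vanishings $H^1(E(-q_X))=H^2(E(-q_X))=0$ into the full cohomology vanishing $H^j(E(-q_X))=0$ for $j\in\{0,1,2,3\}$, and then to observe that Serre duality on the threefold $X$ turns this into the required Ext vanishings. The two missing extremes will be handled using $\mu$-semistability of $E$ together with the numerical datum $c_1(E)=-e_X$ and the fact that $i_X=2q_X+e_X\geq 1$.

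For the Serre duality reduction, using $\omega_X\simeq\calo_X(-2q_X-e_X)$, a direct computation yields
$$\Ext^i(E,\calo_X(-q_X-e_X))\;\simeq\;H^{3-i}(E(-q_X))^{\ast}$$
for every $i$, so the stated Ext vanishings are equivalent to the cohomology vanishings on $E(-q_X)$. It therefore suffices to establish $H^0(E(-q_X))=0$ and $H^3(E(-q_X))=0$.

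The vanishing $H^0(E(-q_X))=0$ will be a one-line slope comparison: since $i_X\geq 1$ forbids $e_X$ and $q_X$ from simultaneously vanishing, one has $\mu(E(-q_X))=-(e_X/r+q_X)\,H_X^3<0$, while a nonzero section would produce an injection $\calo_X\hookrightarrow E(-q_X)$ (torsion-freeness of $E(-q_X)$ forces injectivity), contradicting $\mu$-semistability.

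For $H^3(E(-q_X))=0$, Serre duality identifies this space with $\Hom(E,\calo_X(-q_X-e_X))^{\ast}$. I aim to rule out every nonzero $f\colon E\to\calo_X(-q_X-e_X)$. Such an $f$ has torsion-free rank-one image of the form $\cali_W(-q_X-e_X)\subset\calo_X(-q_X-e_X)$, whence $\mu(\im f)\leq -(q_X+e_X)\,H_X^3$. On the other hand, $\mu$-semistability of $E$ applied to the rank-one quotient $\im f$ gives $\mu(\im f)\geq \mu(E)=-(e_X/r)\,H_X^3$. Combining the two bounds yields $r\,q_X+(r-1)\,e_X\leq 0$, which is impossible for $i_X\geq 1$, so no such $f$ exists. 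I expect this rank-one quotient step to be the main technical point: it is the only place where the Chern-class hypothesis and $\mu$-semistability genuinely interact, and any subtlety in the statement (in particular at small rank) will surface precisely here.
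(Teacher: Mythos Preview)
Your approach is the standard one and essentially correct: the Serre duality reduction, the $H^0$ vanishing via a subsheaf slope comparison, and the $H^3$ vanishing via a rank-one quotient slope comparison are exactly the right moves. The paper is a survey and does not include its own proof of this lemma (it refers to \cite{CJ}), so there is no in-paper argument to compare against; what you wrote is precisely the expected argument.

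There is, however, a genuine slip in your final step. You assert that $r\,q_X + (r-1)\,e_X \leq 0$ is ``impossible for $i_X \geq 1$'', but when $r = 1$ and $i_X = 1$ (so $q_X = 0$, $e_X = 1$) the left-hand side is $0$ and no contradiction arises. You yourself flagged that a small-rank subtlety might surface at this point, and it does---at the level of the \emph{statement}, not of your method. For $E = \calo_X(-1)$ on a Fano threefold of index $1$ one has $E(-q_X) = \calo_X(-1) = \omega_X$, whence $H^3(E(-q_X)) \cong H^0(\calo_X)^{\ast} \cong \C \neq 0$; and by Proposition~\ref{prop:rank-one} this $E$ is a rank-one instanton sheaf of charge $0$. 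More generally, the short exact sequence $0 \to L \to \calo_X(-1) \to \calo_Z(-1) \to 0$ shows $H^3(L) \cong H^3(\omega_X)\neq 0$ for every rank-one instanton $L$ on such a threefold. So the lemma as written fails in this edge case, and your inequality pinpoints exactly where. In every other case your argument is complete: if $e_X = 0$ then $q_X = i_X/2 \geq 1$ forces $r\,q_X > 0$, while if $e_X = 1$ and $r \geq 2$ then $r\,q_X + (r-1) \geq 1$.
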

A conspicuous part of \cite{CJ} is devoted to the study of the non-locally free instanton sheaves. An efficient way to produce sheaves of such a kind is performing \textit{elementary transformations} of instantons along rank 0 instantons.
\begin{definition}\label{defn:rank-0}
A rank 0 instanton sheaf on $X$ is a 1-dimensional sheaf $T$ satisfying
$H^i(T(-q_X))=0, \ i=0,1$.
\end{definition}
\begin{remark}\label{rmk:pure-dim}
The vanishing of $H^0(T(-q_X))$ implies that $H^0(T(-n))=0$ for $n\gg 0$.
Accordingly a rank 0 instanton must have pure dimension 1 (that is to say it admits no zero-dimensional subsheaf).
\end{remark}
The notion of elementary transformation had already been introduced in Remark \ref{rmk:et}: we say that $E$ is the elementary transformation of an instanton $F$ along a rank 0 instanton $T$ if $E$ fits into a short exact sequence of the form:
\begin{equation}\label{eq:et}
    0\lra E\lra F\lra T\lra 0.
\end{equation}
From this short exact sequence we can easily verify that the sheaf $E$ is indeed an instanton that moreover satisfies $E^{**}\simeq F^{**}$; in particular if ever $F$ is reflexive, $F\simeq E^{**}$.
Notice therefore that the non-locally free sheaves $E$ constructed in Remark \ref{rmk:et} and belonging to the boundary $\partial\overline{\cali(n)}$ are instanton sheaves: they are indeed obtained performing elementary transformation of rank 2 instanton bundles $F$ (so that, in particular, $F\simeq E^{**}$) along structure sheaves of lines $\calo_{\ell}$ (these latter are rank 0 instantons on Fano varieties of index 2 since $H^i(\calo_{\ell}(-1))=0$ for $i=0,1$). 
Via the technique described above, we can thus construct families of non-reflexive instantons with  1-dimensional singular locus.
The main properties of non-reflexive instantons are summarized in the following proposition:
\begin{proposition}\label{prop:non-relf}
Let $E$ be a non-reflexive instanton sheaf of rank $r>0$. Then the following hold:
\begin{itemize}
    \item $T_E:=E^{**}/E$ has pure dimension one;
    \item $E$ has homological dimension one;
    \item $E^{**}$ is an instanton if and only if $T_E$ is a rank 0 instanton. 
\end{itemize}
\end{proposition}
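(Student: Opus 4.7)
\emph{Strategy.} The plan is to organize everything around the fundamental short exact sequence
\[ 0 \lra E \lra E^{**} \lra T_E \lra 0, \]
with $T_E \neq 0$ since $E$ is non-reflexive. Because $E$ is torsion-free on a smooth threefold, $T_E$ is supported in codimension at least two, so $\dim T_E \le 1$; item (1) thus reduces to ruling out zero-dimensional associated subsheaves of $T_E$, and this purity statement is what I expect to be the main obstacle. Items (2) and (3) should then fall out rather formally by derived-dualizing the sequence and by twisting it by $\calo_X(-q_X)$, respectively.

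\emph{Proof of (1).} Let $T_E^0 \subseteq T_E$ denote the maximal zero-dimensional subsheaf, and set $E' := \pi^{-1}(T_E^0)$ where $\pi \colon E^{**} \onto T_E$, so that $E \subseteq E' \subseteq E^{**}$ with $E'/E = T_E^0$. The key observation is that $E'$ is itself an instanton sheaf, which will contradict the existence of a nonzero $T_E^0$. Torsion-freeness of $E'$ is immediate from $E' \subseteq E^{**}$; $c_1(E') = c_1(E) = -e_X$ because $T_E^0$ has codimension three; and $\mu$-semistability transfers from $E$ to $E'$ by the standard argument: for any subsheaf $F \subset E'$, the intersection $F \cap E$ has the same rank and first Chern class as $F$ (since $F/(F\cap E)$ embeds in $T_E^0$ and is hence zero-dimensional), so a $\mu$-destabilizer of $E'$ would $\mu$-destabilize $E$. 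Finally, twisting $0 \to E \to E' \to T_E^0 \to 0$ by $\calo_X(-q_X)$, noting that $T_E^0(-q_X) \simeq T_E^0$ since $T_E^0$ is zero-dimensional, and invoking the vanishings $H^\bullet(E(-q_X))=0$ from Lemma \ref{lem:vanish_bundle2} yields $H^i(E'(-q_X)) \simeq H^i(T_E^0) = 0$ for every $i \ge 1$. Thus $E'$ satisfies the full instanton definition. Lemma \ref{lem:vanish_bundle2} now applied to $E'$ forces $H^0(E'(-q_X)) = 0$, whereas the very same long exact sequence identifies this group with $H^0(T_E^0)$, which is nonzero as soon as $T_E^0 \neq 0$. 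This contradiction will force $T_E^0 = 0$, and hence $T_E$ is pure of dimension one.

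\emph{Items (2) and (3).} For (2), I would apply $\RHom(-,\calo_X)$ to the fundamental sequence, producing a long exact sequence of $\inext$-sheaves. Reflexivity of $E^{**}$ on a smooth threefold gives $\inext^i(E^{**},\calo_X) = 0$ for $i \ge 2$, while the pure one-dimensionality of $T_E$ established in (1) means $T_E$ has depth at least one at every closed point of its support, so by Auslander--Buchsbaum $\inext^3(T_E,\calo_X) = 0$. The degree-three piece of the sequence then collapses to $\inext^2(E,\calo_X) \simeq \inext^3(T_E,\calo_X) = 0$; combined with the automatic vanishing $\inext^3(E,\calo_X) = 0$ for torsion-free sheaves on smooth threefolds, this shows that $E$ has homological dimension one. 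For (3), twisting the fundamental sequence by $\calo_X(-q_X)$ and again using Lemma \ref{lem:vanish_bundle2} for $E$ gives $H^i(E^{**}(-q_X)) \simeq H^i(T_E(-q_X))$ for every $i$. Since $E^{**}$ inherits its first Chern class and $\mu$-semistability from $E$ by the same intersection argument, it is an instanton iff $H^1(T_E(-q_X)) = 0$, the vanishing $H^2(T_E(-q_X)) = 0$ being automatic from (1). In the forward direction, Lemma \ref{lem:vanish_bundle2} applied to the instanton $E^{**}$ additionally forces $H^0(T_E(-q_X)) = H^0(E^{**}(-q_X)) = 0$, and together with pure one-dimensionality this is exactly the condition defining a rank $0$ instanton (Definition \ref{defn:rank-0}); the converse will be obtained by reversing this chain of identifications.
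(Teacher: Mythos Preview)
The paper under review is a survey and does not include a proof of this proposition; the result is quoted from \cite{CJ}. Your argument is nonetheless correct and complete. The device in part (1)---pulling back the maximal zero-dimensional subsheaf $T_E^0\subset T_E$ to an intermediate sheaf $E'$ with $E\subset E'\subset E^{**}$, checking that $E'$ is again an instanton, and then extracting the contradiction $0=H^0(E'(-q_X))\cong H^0(T_E^0)\neq 0$ from Lemma \ref{lem:vanish_bundle2}---is clean and is in fact the approach taken in \cite{CJ}. Parts (2) and (3) are routine once (1) is established, and your treatment of both is accurate: purity of $T_E$ gives $\inext^3(T_E,\calo_X)=0$ via Auslander--Buchsbaum, and $\mu$-semistability transfers to $E^{**}$ because the quotient $T_E$ has codimension at least two.
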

Summing up, we can always construct families of non-reflexive instantons via elementary transformation of reflexive instantons along rank 0 instantons but, from Proposition \ref{prop:non-relf} we learn that, in general, not all non-reflexive instanton are obtained in this way.
Nevertheless this last assertion holds true if we restrict to the rank two case. 
\begin{theorem}\label{thm:classification-rk2}
Let $E$ be a rank 2 instanton sheaf. Then $E^{**}$ is an instanton bundle and $T_E:=E^{**}/E$ is a rank 0 instanton whenever $T_E\ne 0$.
\end{theorem}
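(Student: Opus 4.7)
The plan is to analyze the canonical short exact sequence $0 \to E \to E^{**} \to T_E \to 0$ using Proposition \ref{prop:non-relf}, combining Serre duality, $\Ext$-vanishings, and Hirzebruch--Riemann--Roch. By that proposition, when $T_E \neq 0$ we already know that $T_E$ has pure dimension one, $E$ has homological dimension one, and ``$E^{**}$ is an instanton sheaf'' is equivalent to ``$T_E$ is a rank $0$ instanton''. It therefore suffices to establish that $E^{**}$ is locally free, stable, and satisfies the instantonic cohomology vanishing. The key observation driving the proof is that the length $h := c_3(E^{**})/H_X^3$ of the non-locally-free locus of $E^{**}$ and the dimension $h^1(E^{**}(-q_X))$ will both enter the Euler characteristic $\chi(E^{**}(-q_X))$ with opposite signs, so non-negativity of these two quantities will force both to vanish simultaneously.

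First I would establish three vanishings for $E^{**}(-q_X)$. The vanishing $H^0(E^{**}(-q_X)) = 0$ follows from the $\mu$-semistability of $E^{**}$ (inherited from $E$) together with the negativity of the twisted slope $-i_X/2$. For the other two, Serre duality identifies $H^3(E^{**}(-q_X))^* \simeq \Hom(E^{**}, \calo_X(-q_X-e_X))$ and $H^2(E^{**}(-q_X))^* \simeq \Ext^1(E^{**}, \calo_X(-q_X-e_X))$; applying $\Hom(-, \calo_X(-q_X-e_X))$ to the defining exact sequence and using the full vanishing $\Ext^i(E, \calo_X(-q_X-e_X)) = 0$ from Lemma \ref{lem:vanish_bundle2}, these identify with the corresponding $\Ext$ groups of $T_E$. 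The latter vanish: $\Hom(T_E, \calo_X(-q_X-e_X)) = 0$ since $T_E$ is torsion, and $\Ext^1(T_E, \calo_X(-q_X-e_X)) = 0$ follows from the local-to-global $\Ext$ spectral sequence together with the fact that $T_E$ is pure of dimension one on the smooth threefold $X$, which forces $\inext^j(T_E, \calo_X) = 0$ for $j \neq 2$.

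Next, Hirzebruch--Riemann--Roch together with the universal Fano identity $c_1(X) \cdot c_2(X) = 24$ (equivalent to $\chi(\calo_X) = 1$) and the relation $2q_X + e_X = i_X$ yields, after all cancellations,
\[ \chi(E^{**}(-q_X)) = \frac{h}{2} H_X^3. \]
Combined with $\chi(E^{**}(-q_X)) = -h^1(E^{**}(-q_X))$ from the three vanishings above, this gives $\frac{h}{2} H_X^3 = -h^1(E^{**}(-q_X))$; since both $h$ and $h^1$ are non-negative integers and $H_X^3 > 0$, both must vanish. The equality $h = 0$ gives local freeness of $E^{**}$ (as $c_3$ is precisely the length of the singular locus of a rank $2$ reflexive sheaf on a smooth threefold), while $h^1(E^{**}(-q_X)) = 0$ completes the instantonic vanishing. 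Stability of the resulting rank $2$ bundle is then automatic when $e_X = 1$ (since $-1/2$ is not an integer), and for $e_X = 0$ follows by the standard rank $2$ argument using the cohomological vanishings just obtained. Hence $E^{**}$ is an instanton bundle, and when $T_E \neq 0$ the equivalence in Proposition \ref{prop:non-relf} gives that $T_E$ is a rank $0$ instanton.

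The main obstacle is the Riemann--Roch identity $\chi(E^{**}(-q_X)) = \frac{h}{2} H_X^3$: its verification is a routine but delicate expansion. The point is that all terms involving $c_2(E^{**})$, $c_2(X)$, and the middle Todd class $\td_2(X)$ must cancel uniformly across all four indices, and this relies crucially on both $H_X \cdot c_2(X) = 24/i_X$ and the parity relation $2q_X + e_X = i_X$ to produce the clean final expression.
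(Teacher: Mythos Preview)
The paper is a survey and does not prove Theorem \ref{thm:classification-rk2}; it is quoted from \cite{CJ}. So there is no in-paper argument to compare against, and your proposal should be judged on its own.

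Your overall architecture is sound and is presumably close to what \cite{CJ} does: establish $H^0=H^2=H^3=0$ for $E^{**}(-q_X)$ by combining $\mu$-semistability, Serre duality, Lemma \ref{lem:vanish_bundle2}, and the purity of $T_E$ from Proposition \ref{prop:non-relf}; then use Riemann--Roch to force $h^1(E^{**}(-q_X))$ and $c_3(E^{**})$ to vanish simultaneously. The $\Ext$-vanishings for $T_E$ via the local-to-global spectral sequence are correct, and the positivity trick is exactly the right mechanism.

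Two points deserve attention. First, the normalization $h:=c_3(E^{**})/H_X^3$ is off: $c_3(E^{**})$ is already an integer (equal, by Hartshorne's result, to the length of the zero-dimensional sheaf $\inext^1(E^{**},\omega_X)$), and the Riemann--Roch output should read $\chi(E^{**}(-q_X))=\tfrac{1}{2}c_3(E^{**})$ with no $H_X^3$ factor. This is cosmetic but worth fixing. Second, your final stability claim overshoots. In the framework of Definition \ref{defn:inst-CJ}, an ``instanton bundle'' is a locally free instanton sheaf, hence only $\mu$-semistable; and the paper itself points out (in the remark following Lemma \ref{lem:cohom-inst}) that Fano threefolds of index $2$ carry strictly $\mu$-semistable rank $2$ instanton bundles. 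Taking $E$ to be such a bundle gives $E^{**}=E$ strictly semistable, so your ``standard rank $2$ argument'' for stability when $e_X=0$ cannot go through in general. Simply drop that sentence: $\mu$-semistability of $E^{**}$ is inherited from $E$, and together with local freeness and the instantonic vanishing that is all the theorem asserts.
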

\begin{remark}\label{rmk:sing-rk2 }
From \ref{rmk:pure-dim} and Theorem \ref{thm:classification-rk2} we learn
the following: a rank 2 instanton $E$ either does not present singularities or it has purely one dimensional singular locus $\Sing(E)=\Supp(E^{**}/E)$.
\end{remark}
In the rank one case the investigation of non-reflexive instantons even lead to a complete classification of the rank one instantons.
\begin{proposition}\label{prop:rank-one}
Let $L$ be a rank 1 instanton sheaf of charge $n$ on a Fano threefold $X$ with Picard rank one. The following hold:
\begin{itemize}
    \item if $i_X=3,4$ then $n=0$ and $L\simeq \calo_X(-e_X)$;
    \item if $i_X=1,2$, we have $L\simeq \calo_X(-e_X)$ whenever $n=0$ whilst for $n>0$, $L$ always fits in a short exact sequence of the form:
    $$0\rightarrow L\rightarrow L'\rightarrow \calo_{\ell}(-e_X)\rightarrow 0$$
    for a line $\ell\subset X$ and a rank one instanton $L'$ of charge $n-1$.
\end{itemize}
\end{proposition}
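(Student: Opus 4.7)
The plan is to present $L$ as a twisted ideal sheaf, identify the corresponding quotient as a rank $0$ instanton, and then either rule out nonempty $Z$ (in high index) or peel off a line component (in low index) to induct on the charge.

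First I would use that $L$ is rank one, torsion free, with $c_1(L)=-e_X H_X$, combined with $\Pic(X)\simeq\Z H_X$, to conclude $L^{**}\simeq\calo_X(-e_X)$, so $L\simeq I_Z(-e_X)$ for some subscheme $Z\subset X$ of codimension at least two. Twisting the defining sequence $0\to L\to\calo_X(-e_X)\to\calo_Z(-e_X)\to 0$ by $\calo_X(-q_X)$ and taking cohomology, one checks in each of the four index cases that $H^i(\calo_X(-q_X-e_X))=0$ for $i=0,1,2$ (directly by Kodaira for $i_X\geq 2$; via Serre duality together with $H^j(\calo_X)=0$, $j>0$, when $i_X=1$); together with the instanton vanishings $H^1(L(-q_X))=H^2(L(-q_X))=0$, this yields
\[ H^0(\calo_Z(-q_X-e_X))=H^1(\calo_Z(-q_X-e_X))=0. \]
Either $Z=\emptyset$ (so $L\simeq\calo_X(-e_X)$ with $n=0$), or by Remark~\ref{rmk:pure-dim} $Z$ is pure $1$-dimensional and $T:=\calo_Z(-e_X)$ is a rank $0$ instanton. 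Setting $d=\deg_{H_X}Z$, a Chern class computation gives $n=c_2(L)=d$, while linearity of the Hilbert polynomial forces $\chi(\calo_Z)=(q_X+e_X)\,d$.

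For $i_X\in\{3,4\}$ we have $q_X+e_X=2$, so $\chi(\calo_Z)=2d$. I would then contradict this via the bound $\chi(\calo_Z)\leq d$ for pure $1$-dimensional subschemes of a Fano threefold of Picard rank one: each connected reduced component contributes $\chi\leq 1$, the number of connected components is at most $d$, and the non-reduced case reduces to the reduced one by filtering by the nilradical, again using the vanishing $H^0(\calo_Z(-q_X-e_X))=0$ to rule out high-multiplicity thickenings. Hence $d=0$, $Z=\emptyset$, and $n=0$. For $i_X\in\{1,2\}$ with $n>0$, we instead have $\chi(\calo_Z)=d>0$; the goal becomes exhibiting a line $\ell\subset X$ together with an embedding $\calo_\ell(-e_X)\hookrightarrow T$. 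Given this, I would define $L'\subset\calo_X(-e_X)$ as the preimage of $\calo_\ell(-e_X)$ under $\calo_X(-e_X)\twoheadrightarrow T$: by construction $0\to L\to L'\to\calo_\ell(-e_X)\to 0$, and the cohomology long exact sequence, using $H^i(\calo_\ell(-q_X-e_X))=H^i(\calo_{\p1}(-1))=0$ for $i=0,1$, transfers the instanton vanishings to $L'$; together with $c_2(L')=n-1$, this makes $L'$ a rank $1$ instanton of charge $n-1$, as desired.

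The principal obstacle is producing the embedding $\calo_\ell(-e_X)\hookrightarrow T$. In the reduced case this is straightforward: the equality $\chi(\calo_Z)=d$ combined with $\chi(\calo_{Z_i})\leq 1$ per connected reduced component forces every irreducible component of $Z$ to be a line, and any such component yields the inclusion as a direct summand of $\calo_Z$. The non-reduced case is the delicate one: via a component-wise analysis of the pure $1$-dimensional sheaf $T$ that exploits the rank-$0$ instanton cohomology constraints and the existence of lines on Fano threefolds of index $1$ or $2$, one must extract a subquotient isomorphic to some $\calo_\ell(-e_X)$ and then lift it to a genuine subsheaf of $T$ through the vanishing of appropriate Ext groups.
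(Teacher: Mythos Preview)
The paper does not actually prove this proposition; it is a survey that cites the result from \cite{CJ}. So there is no ``paper's own proof'' to compare against, and I will assess your argument on its merits.

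Your overall strategy is the natural one and matches what one expects from \cite{CJ}: write $L\simeq I_Z(-e_X)$, show $\calo_Z(-e_X)$ is a rank~0 instanton, then analyze $Z$. The reduction step and the Hilbert polynomial computation $\chi(\calo_Z)=(q_X+e_X)d$ are correct.

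However, the argument for $i_X\in\{3,4\}$ has a genuine gap. The inequality $\chi(\calo_Z)\leq d$ is \emph{false} for arbitrary pure one-dimensional subschemes: on $\p3$, for any $a\geq -1$ there exists a locally Cohen--Macaulay double structure $Z$ on a line $\ell$ fitting into $0\to\calo_\ell(a)\to\calo_Z\to\calo_\ell\to 0$, giving $d=2$ and $\chi(\calo_Z)=a+2$, which exceeds $d$ once $a\geq 1$. Your sketch (``filter by the nilradical and use the vanishing'') is too vague to rule these out, and the bound as stated is simply wrong. A cleaner route avoids the non-reduced analysis entirely: take any integral component $C$ of $Z_{\rm red}$; the surjection $\calo_Z\twoheadrightarrow\calo_C$ together with $H^2$ vanishing in dimension~1 forces $H^1(\calo_C(-2))=0$, whence $h^0(\calo_C(-2))=\chi(\calo_C(-2))=1-g_C-2\deg C\leq -1<0$, a contradiction.

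For $i_X\in\{1,2\}$ the same trick shows every integral component of $Z_{\rm red}$ is a line (since $H^1(\calo_C(-1))=0$ forces $\deg C=1$, $g_C=0$). But note that this gives $\calo_\ell$ as a \emph{quotient} of $\calo_Z$, whereas the statement requires an \emph{injection} $\calo_\ell(-e_X)\hookrightarrow\calo_Z(-e_X)$. You correctly flag this as the crux and honestly acknowledge that your sketch for the non-reduced case is incomplete; producing that subsheaf (rather than a quotient) is indeed where the real work lies, and it is not addressed in your proposal.
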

\subsection{Instanton bundles on Fano threefolds of index 2}

The rank 2 instanton bundles on a Fano threefold $X$ of index 2 are also the main subject of Kuznetsov's work \cite{K}. In the article the author's attention is mainly drawn to 
the behavior of these bundles seen as objects in the derived category  $D^b(X)$ of $X$. 
The definition of instanton provided by Kuznetsov is the following:
\begin{definition}\label{defn:kutz}
Let $X$ be a Fano threefold of index 2. An \textit{instanton of charge n} is a stable vector bundle $E$ of rank 2 with $c_1(E)=0, \ c_2(E)=n$ and such that $H^1(E(-1))=0.$

\end{definition}
We notice therefore that on $X$, the definition of instanton adopted by Kuznetsov coincides the one presented by Faenzi.
Kuznetsov's investigation of instanton bundles on $X$ starts with the computation of their cohomology table.
\begin{lemma}\label{lem:cohom-inst}
Let $E$ be an instanton bundle of charge $n$ on a Fano threefold $X$ of index 2. Then the cohomology table of $E$ has the following shape:
\begin{center}
    \begin{tabular}{@{}l|lcccccc@{}}
\toprule
   $t$& $\cdots $& $-3$ & $-2$ & $-1$ & $0$ & $1$ & $\cdots $ \\ \hline
   &&&&&&& \\
$h^3(E(t))$ &  $\cdots$  & $\ast$ & $0$ & $0$ & $0$ & $0$ & $\cdots $ \\
&&&&&&& \\
$h^2(E(t))$ & $\cdots$   & $\ast$ & $n-2$ & $0$ & $0$ & $0$ &$\cdots$     \\
&&&&&&& \\
$h^1(E(t))$ & $\cdots$   & $0$ & $0$ & $0$ & $n-2$ & $\ast$ &$\cdots$ \\
&&&&&&& \\
$h^0(E(t))$ & $\cdots$   & $0$ & $0$ & $0$ & $0$ &$\ast$ &$\cdots$ \\
 \bottomrule
\end{tabular}
\end{center}
 \end{lemma}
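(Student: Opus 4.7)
The plan is to build the table using three inputs: the vanishings of Lemma \ref{lem:vanish_bundle}, Serre duality combined with the self-duality $E \simeq E^*$ (automatic for a rank two bundle with $c_1(E) = 0$, and also implied by Definition \ref{defn:inst-F} since $e_X = 0$), and a single Riemann--Roch computation to pin down the only nonzero cohomology dimension.

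First I would record the symmetry. Since $i_X = 2$ one has $\omega_X \simeq \calo_X(-2)$, so Serre duality reads
\[
H^i(E(t)) \simeq H^{3-i}(E^*(-t-2))^* \simeq H^{3-i}(E(-t-2))^*.
\]
This is a reflection of the table around the column $t=-1$ that swaps the rows $h^0 \leftrightarrow h^3$ and $h^1 \leftrightarrow h^2$. In particular it automatically matches the value $n-2$ at position $(h^1, t=0)$ with the value $n-2$ at position $(h^2, t=-2)$, and reduces the task to determining the half of the table with $t \leq -1$.

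Next I would fill in the vanishings with $q_X = 1$. Lemma \ref{lem:vanish_bundle} gives $H^1(E(t))=0$ for every $t \leq -1$, accounting for the three zeros at $t = -3,-2,-1$ in the $h^1$ row, and $H^2(E(t))=0$ for every $t \geq -1$, giving the zeros in the $h^2$ row at $t=-1,0,1$. The $\mu$-stability of $E$ together with $c_1(E)=0$ forces $H^0(E(t))=0$ for every $t \leq 0$ (since any nonzero section would produce a subsheaf $\calo_X(-t) \hookrightarrow E$ with slope at least $\mu(E)=0$); reflecting via Serre duality yields $H^3(E(t))=0$ for every $t \geq -2$. The column $t=-1$ is identically zero by Lemma \ref{lem:vanish_bundle2}. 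This accounts for every zero in the displayed table.

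The only remaining entry is $h^1(E)=n-2$. From the vanishings just established, $H^0(E) = H^2(E) = H^3(E) = 0$, so $h^1(E) = -\chi(E)$. Hirzebruch--Riemann--Roch, using $c_1(E)=0$ and $c_3(E)=0$ (the latter because $E$ has rank two), gives $\ch(E) = 2 - c_2(E)$, and the only term of $\td(X)$ that contributes to $\int_X \ch(E)\,\td(X)$ is the degree-one piece $-K_X/2 = H_X$. Thus $\chi(E) = 2\chi(\calo_X) - c_2(E)\cdot H_X = 2 - n$, and hence $h^1(E) = n-2$. The main obstacle here is essentially bookkeeping: matching the integer convention ``$c_2(E)=n$'' in Definition \ref{defn:kutz} with the intersection number $c_2(E)\cdot H_X$ on each Fano threefold of index $2$, and recording that $\chi(\calo_X)=1$. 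Once these normalisations are fixed, every entry of the table drops out mechanically from the inputs above.
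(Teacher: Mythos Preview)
The paper does not actually include a proof of this lemma; it is quoted as a result from Kuznetsov \cite{K} and stated without argument. Your proof is correct and follows exactly the expected route: the $h^1$ and $h^2$ rows come from Lemma~\ref{lem:vanish_bundle} (with $q_X=1$), the $h^0$ row from $\mu$-stability, the $h^3$ row from the Serre duality symmetry $H^i(E(t))\cong H^{3-i}(E(-t-2))^*$ afforded by $E\simeq E^*$ and $\omega_X\simeq\calo_X(-2)$, and the remaining value $h^1(E)=n-2$ from Riemann--Roch together with $\chi(\calo_X)=1$. This is essentially the argument given in \cite[Lemma~2.2]{K}, so there is nothing to correct.
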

As an immediate corollary we also get the following:
\begin{corollary}
The charge of an instanton is greater or equal than $2$.
\end{corollary}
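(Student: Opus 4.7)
The plan is to derive the bound as an immediate consequence of the cohomology table already established in Lemma \ref{lem:cohom-inst}. No further geometric input is needed; the entire content of the corollary is the non-negativity of dimensions of cohomology groups.

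Concretely, reading off the table, one has $h^1(E) = n-2$, or equivalently (via Serre duality, using $\omega_X \simeq \calo_X(-2)$ and $c_1(E)=0$) $h^2(E(-2)) = n-2$. Since the left-hand side is the dimension of a complex vector space, it must be a non-negative integer, which forces $n \geq 2$. That is the whole proof.

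Thus the only substantive work is the computation of the cohomology table in Lemma \ref{lem:cohom-inst}, where one combines the instantonic vanishing $H^1(E(-1))=0$ with Serre duality, the $\mu$-stability of $E$ (which kills $H^0(E)$ since $c_1(E)=0$), and a Riemann--Roch calculation that evaluates the nonzero central entries as $n-2$. Since that lemma is taken as given, the corollary reduces to a one-line inequality and I anticipate no real obstacle. One could additionally remark, but need not prove here, that the bound is sharp: the non-emptiness results recalled in Section \ref{section:Fano} already guarantee the existence of charge $2$ instantons on any Fano threefold of index $2$.
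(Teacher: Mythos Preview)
Your argument is correct and coincides with the paper's: the corollary is stated as an immediate consequence of Lemma~\ref{lem:cohom-inst}, and the implicit reasoning is exactly the non-negativity of $h^1(E)=n-2$ (equivalently $h^2(E(-2))=n-2$). There is nothing to add.
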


\begin{remark}
\noindent
\begin{itemize}
\item By \cite[Theorem 3.1]{F}, we know that instantons of charge 2 indeed exist so that 2 is actually the minimal value of the charge of an instanton bundle on a Fano threefold of index 2. However, $\cali_\ell\oplus\calo_X$ is an example of a non locally free instanton sheaf of charge 1.
\item By the table displayed in Lemma  \ref{lem:cohom-inst}, we see that since $H^0(E)=0$, the Gieseker stability of an instanton bundle $E$ actually coincides with its slope-stability.
\item Because of the stability assumption required in Definition \ref{defn:kutz}, we see that a vector bundle that is an instanton in the sense of Kuznetsov (or equivalently of Faenzi) is clearly an instanton in the sense of Definition \ref{defn:inst-CJ}. Nevertheless the converse implication does not hold: it is indeed shown in \cite{CJ} that Fano threefolds of index 2 admit strictly $\mu$-semistable rank 2 vector bundles $E$ with $ch(E)=(2,0,-n,0)$ and $H^i(E(-1))=0$, for all $i$. These bundles are therefore instantons according to Definition \ref{defn:inst-CJ} but not in the sense of Definitions \ref{defn:kutz} and \ref{defn:inst-F}. In loc.cit. it is shown that actually the Fano threefolds of index 2 indeed are the only ones carrying families of strictly $\mu$-semistable rank 2 instanton bundles of charge $n>0$ and that moreover each instanton $E$ of such a kind fits into a short exact sequence:
$$ 0\lra \calo_X\lra E\lra L\lra 0 $$
with $L$ a rank one instanton of charge $n$.
\end{itemize}

\end{remark}

\subsection{The acyclic extension of instantons} As we mentioned before, one of Kuznetsov's main aims is to describe the properties of instantons using the language of derived categories.
We recall that for a Fano threefold $X$ of index 2, the collection of line bundles $\calo_X, \ \calo_X(1)$ is exceptional; accordingly we obtain the following semiorthogonal decompositon of the  derived category $D^b(X)$:
\begin{equation*}
   D^b(X)= \langle \calb_X, \calo_X,\calo_X(1)\rangle, \hspace{2mm} \calb_X:=\langle \calo_X, \calo_X(1)\rangle ^{\perp}.
\end{equation*}
Starting from an instanton $E$, as $E\in \langle\calo_X(1)\rangle^{\perp}$ (this is due to Lemma \ref{lem:cohom-inst}), we can construct an object $\tilde{E}\in \calb_X$ performing a \textit{left mutation through $\calo_X$}. We recall that the left mutation through $\calo_X$ is the functor $\mathbb{L}_{\calo}:D^b(X)\rightarrow \langle\calo_X\rangle^{\perp}$ sending an object $F\in D^b(X)$ to the cone of the evaluation morphism $\Ext^{\bullet}(\calo_X,F)\otimes \calo_X\to F$.
Since for an instanton bundle $E$, the complex $\Ext^{\bullet}(\calo_X,E)\otimes \calo_X$ is concentrated in degree $-1$ (once again, this is due to Lemma \ref{lem:cohom-inst}), $\tilde{E}:=\mathbb{L}_{\calo}(E)$ is actually a sheaf object that fits into a short exact sequence:
$$ 
0\lra E\lra\tilde{E}\lra \calo_X^{n-2}\lra 0.
$$
The sheaf $\tilde{E}$ is referred to as the \textit{acyclic extension of $E$}.
\begin{lemma}\label{lem:acyclic-inst}
The acyclic extension of an instanton $E$ is a simple slope-semistable vector bundle $\tilde{E}$ with $\ch(\tilde{E})=(n, \: 0, \: -n,\:0)$ and such that $
H^{\bullet}(\tilde{E})=H^{\bullet}(\tilde{E}(-1))=0$. Moreover $h^{0}(\tilde{E})=h^{1}(\tilde{E})=n-2$ and $h^{2}(\tilde{E})=h^{3}(\tilde{E})=0.$
\end{lemma}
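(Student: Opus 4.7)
The plan is to work entirely with the defining short exact sequence
$$ 0 \lra E \lra \tilde{E} \lra \calo_X^{\oplus(n-2)} \lra 0 $$
coming from the mutation $\tilde{E} := \mathbb{L}_\calo E$, combining it with the cohomology table of $E$ in Lemma \ref{lem:cohom-inst} and Kodaira vanishing on $X$. Additivity of the Chern character in this sequence immediately gives $\ch(\tilde{E}) = \ch(E) + (n-2)\ch(\calo_X) = (n,0,-n,0)$, and $\tilde{E}$ is locally free as an extension of one locally free sheaf by another. The vanishing $H^\bullet(\tilde{E}(-1))=0$ follows at once by twisting by $\calo_X(-1)$: Lemma \ref{lem:cohom-inst} gives $H^\bullet(E(-1))=0$, while Kodaira vanishing applied to $\omega_X = \calo_X(-2)$ gives $H^\bullet(\calo_X(-1))=0$. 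For $H^\bullet(\tilde{E})=0$, the untwisted long exact sequence collapses to
$$ 0 \lra H^0(\tilde{E}) \lra \C^{n-2} \xrightarrow{\delta} H^1(E) \lra H^1(\tilde{E}) \lra 0, $$
and the crucial point is that $\delta$ is an isomorphism: by construction of $\mathbb{L}_\calo$, the extension class of the sequence in $\Ext^1(\calo_X^{n-2},E) \simeq \Hom(\C^{n-2}, H^1(E))$ is precisely the identification $\C^{n-2} \simeq H^1(E)$ built into the evaluation map $H^\bullet(E)\otimes\calo_X \to E$.

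For simplicity, I would chain two Hom--Ext long exact sequences. Applying $\Hom(-,\tilde{E})$ to the defining sequence together with the vanishings $H^0(\tilde{E}) = H^1(\tilde{E}) = 0$ just obtained yields $\Hom(\tilde{E},\tilde{E}) \simeq \Hom(E,\tilde{E})$. Applying $\Hom(E,-)$ and using that $E \simeq E^*$ (automatic for a rank $2$ bundle with $c_1(E)=0$), the vanishing $\Hom(E,\calo_X^{n-2}) = H^0(E^*)^{n-2} = H^0(E)^{n-2} = 0$ from Lemma \ref{lem:cohom-inst} gives $\Hom(E,\tilde{E}) \simeq \Hom(E,E) = \C$ by the stability of $E$.

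For $\mu$-semistability, I would argue by contradiction: suppose $F \subset \tilde{E}$ is a saturated subsheaf with $c_1(F) > 0$. Set $F' := F \cap E$ and let $F''$ be the image of $F$ in $\calo_X^{n-2}$, so $\rk(F) = \rk(F') + \rk(F'')$ and $c_1(F) = c_1(F') + c_1(F'')$. Since $\calo_X^{n-2}$ is $\mu$-semistable of slope zero on a Picard rank one threefold, $c_1(F'') \leq 0$. A case analysis on $\rk(F') \in \{0,1,2\}$ -- using torsion-freeness of $E$ in case $0$, the stability of $E$ forcing $c_1(F') \leq -1$ in case $1$, and the effectiveness of the torsion quotient $E/F'$ forcing $c_1(F') \leq c_1(E) = 0$ in case $2$ -- yields $c_1(F) \leq 0$ in every case, a contradiction. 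The main obstacle is the identification of $\delta$ in the first step: once this is pinned down from the defining property of the left mutation, the remaining claims reduce to bookkeeping with long exact sequences and the cohomology table of Lemma \ref{lem:cohom-inst}.
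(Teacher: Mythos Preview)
The paper does not actually contain a proof of this lemma: Section 4.5--4.6 is a survey of Kuznetsov's results, and Lemma \ref{lem:acyclic-inst} is quoted from \cite{K} without argument. So there is nothing to compare your proof against in this paper itself.

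That said, your argument is correct and is essentially the one Kuznetsov gives. The identification of the connecting homomorphism $\delta$ with the canonical isomorphism $\C^{n-2}\simeq H^1(E)$ is exactly the point, and you have pinned it down correctly from the universal property of the left mutation. The simplicity and semistability arguments are also fine; the case analysis on $\rk(F')$ is the standard way to handle extensions by trivial bundles.

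One remark: the ``Moreover'' clause in the statement as printed, namely $h^{0}(\tilde{E})=h^{1}(\tilde{E})=n-2$, is incompatible with the vanishing $H^{\bullet}(\tilde{E})=0$ you just proved. This is a typo in the survey; in \cite{K} the corresponding assertion is about $\tilde{E}^{*}$, not $\tilde{E}$ (compare Theorem \ref{thm:reconstruction}, where the condition $h^0(F^*)=h^1(F^*)=n-2$ appears). You were right not to attempt to prove the clause as written. If you want to prove the corrected version, dualize the defining sequence and use $H^0(E)=0$, $h^1(E)=n-2$ together with $H^{\ge 1}(\calo_X)=0$.
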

Recall now that since an instanton $E$ (in the sense of Definition \ref{defn:kutz}) has rank 2 and first Chern class 0, it is \textit{self-dual}; this property implies in particular a ``generalized self-duality" of its acyclic extension. 
Consider indeed the functor \mbox{$D:D^b(X)\to D^b(X), \ F\mapsto \mathbb{L}_{\calo}(R\inhom(F,\calo_X))$}. It is not difficult to prove that the functor $D$ satisfies the following properties:
\begin{lemma}
\noindent
\begin{itemize}
    \item There exists a natural isomorphism $\delta:D^2\xrightarrow{\sim} id$;
    \item the category $\calb_X$ is preserved by $D$.
\end{itemize}
\end{lemma}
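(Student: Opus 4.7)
The plan is to analyze the defining triangle
\[
R\Hom(\calo_X, F^\vee) \otimes \calo_X \longrightarrow F^\vee \longrightarrow D(F) \longrightarrow R\Hom(\calo_X, F^\vee) \otimes \calo_X[1]
\]
of $D = \mathbb{L}_{\calo_X} \circ (-)^\vee$, where $F^\vee := R\inhom(F,\calo_X)$, and derive both statements from it. Preservation of $\calb_X$ will drop out from Serre duality together with the exceptionality of the pair $(\calo_X,\calo_X(1))$, while the self-inversion will come from dualizing the same triangle.

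For preservation I take $F \in \calb_X$. The construction of the left mutation already forces $D(F) \in \langle \calo_X \rangle^\perp$, so it suffices to show $\Hom^\bullet(\calo_X(1), D(F)) = 0$. Applying $\Hom^\bullet(\calo_X(1), -)$ to the defining triangle, the left-hand term contributes $\Hom^\bullet(\calo_X(1), \calo_X) \otimes R\Hom(\calo_X, F^\vee) = 0$ because $\Hom^\bullet(\calo_X(1),\calo_X) = H^\bullet(\calo_X(-1)) = 0$ by exceptionality on a Fano threefold of index $2$. The middle term contributes $\Hom^\bullet(\calo_X(1), F^\vee) = \Hom^\bullet(F, \calo_X(-1))$, and by Serre duality with $\omega_X = \calo_X(-2)$ this equals $\Hom^\bullet(\calo_X(1), F)^\vee[-3] = 0$ since $F \in \calb_X$. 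Hence $D(F) \in \calb_X$.

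For $\delta$ I dualize the defining triangle (using $F^{\vee\vee} \simeq F$ on smooth $X$) and rotate to obtain
\[
R\Hom(F, \calo_X)^\vee \otimes \calo_X[-1] \longrightarrow D(F)^\vee \longrightarrow F \longrightarrow R\Hom(F, \calo_X)^\vee \otimes \calo_X,
\]
using the identification $R\Hom(\calo_X, F^\vee) = R\Hom(F, \calo_X)$. Now I apply $R\Hom(\calo_X, -)$ to this triangle: since $R\Hom(\calo_X, F) = 0$ (because $F \in \calb_X$), the resulting long exact sequence gives $R\Hom(\calo_X, D(F)^\vee) \simeq R\Hom(F, \calo_X)^\vee[-1]$. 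Substituting this into the defining triangle of $D^2(F) = \mathbb{L}_{\calo_X}(D(F)^\vee)$ reproduces exactly the triangle just displayed with $D^2(F)$ in place of $F$ as the cone, so $D^2(F) \simeq F$.

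The main obstacle will be upgrading the per-object isomorphism $D^2(F) \simeq F$ to a genuine natural transformation $\delta : D^2 \Rightarrow \mathrm{id}$, since the construction above involves cone choices in two parallel triangles that must be matched coherently across morphisms of $\calb_X$. I would extract this naturality from the universal property of $\mathbb{L}_{\calo_X}$ as the left adjoint to the inclusion $\langle \calo_X \rangle^\perp \hookrightarrow D^b(X)$, combined with the functoriality of the derived duality pairing $R\Hom(\calo_X, F^\vee) \simeq R\Hom(F, \calo_X)$; an octahedral diagram interleaving the duality and mutation functors should then assemble the pointwise isomorphisms into the required natural transformation.
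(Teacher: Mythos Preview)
The paper does not prove this lemma; it is quoted from Kuznetsov's work \cite{K} with only the remark ``It is not difficult to prove that the functor $D$ satisfies the following properties.'' So there is no paper proof to compare against, and I will simply assess your argument.

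Your proof that $D$ preserves $\calb_X$ is correct and is the standard argument. For the involution, your pointwise computation is essentially right, but note that you invoke $R\Hom(\calo_X,F)=0$, so you are only establishing $D^2(F)\simeq F$ for $F\in\langle\calo_X\rangle^{\perp}$ (hence for $F\in\calb_X$). This restriction is in fact necessary: read literally on all of $D^b(X)$ the first claim is false, since $\calo_X^{\vee}=\calo_X$ and $\mathbb{L}_{\calo_X}(\calo_X)=0$, so $D^2(\calo_X)=0\not\simeq\calo_X$. The intended statement (and Kuznetsov's) is that $D$ is an anti-involution on $\calb_X$, which is precisely what your argument yields.

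There is one genuine gap in the pointwise step. You conclude $D^2(F)\simeq F$ because both arise as cones of a map $R\Hom(F,\calo_X)^{\vee}\otimes\calo_X[-1]\to D(F)^{\vee}$, but you have two a priori different such maps: the evaluation map defining $\mathbb{L}_{\calo_X}(D(F)^{\vee})$, transported through your identification $R\Hom(\calo_X,D(F)^{\vee})\simeq R\Hom(F,\calo_X)^{\vee}[-1]$, and the dual of the evaluation map for $F^{\vee}$. You must check these agree; this is exactly the compatibility of evaluation with duality, and it follows from the tensor--Hom adjunction, but it should be stated rather than assumed. Once that is in place, your plan for naturality via the adjoint description of $\mathbb{L}_{\calo_X}$ is the right one.
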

Once we have defined the functor $D$ we can state the self-duality property of acyclic extensions.
\begin{proposition}\label{prop:acyclic-selfdual}
Let $\tilde{E}$ be the acyclic extension of an instanton bundle $E$. Then there exists a skew-symmetric isomorphism $D(\tilde{E})\xrightarrow{\phi} \tilde{E}$, in the sense that it fits into a commutative diagram:
$$
\begin{tikzcd}
 & D^2(\tilde{E}) \arrow{dr}{\delta_{\tilde{E}}} \\
D(\tilde{E}) \arrow{ur}{D(\phi)} \arrow{rr}{-\phi} && \tilde{E}.
\end{tikzcd}
$$
\end{proposition}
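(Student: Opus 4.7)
The plan is to lift the canonical symplectic structure of the rank $2$ instanton bundle $E$ through the left mutation $\mathbb{L}_{\calo}$ and then transport it across an identification $D(\tilde{E})\simeq \mathbb{L}_{\calo}(E^{*})$ obtained by dualizing the defining triangle of $\tilde{E}$. The starting observation is that because $E$ has rank $2$ and $c_1(E)=0$, the trivialization of $\det E$ induces a canonical isomorphism $\psi\colon E\xrightarrow{\sim} E^{*}$ coming from the wedge pairing $E\otimes E\to \det E\simeq \calo_X$; skew-symmetry of this pairing forces $\psi^{\vee}=-\psi$ under the canonical identification $E^{**}=E$, where we write $(-)^{\vee}:=R\inhom(-,\calo_X)$.

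Next I would compute $D(\tilde{E})$. Applying $R\inhom(-,\calo_X)$ to the short exact sequence
$$ 0\to E\to \tilde{E}\to \calo_X^{n-2}\to 0 $$
of Lemma \ref{lem:acyclic-inst} yields a distinguished triangle $\calo_X^{n-2}\to \tilde{E}^{\vee}\to E^{*}$. Since $\mathbb{L}_{\calo}$ is a triangulated functor that kills $\calo_X$, applying it to this triangle produces a canonical isomorphism $D(\tilde{E})=\mathbb{L}_{\calo}(\tilde{E}^{\vee})\xrightarrow{\sim} \mathbb{L}_{\calo}(E^{*})$. Post-composing with $\mathbb{L}_{\calo}(\psi^{-1})$ then produces the isomorphism
$$ \phi\colon D(\tilde{E})\xrightarrow{\sim} \mathbb{L}_{\calo}(E^{*})\xrightarrow{\mathbb{L}_{\calo}(\psi^{-1})} \mathbb{L}_{\calo}(E)=\tilde{E}. $$

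The main obstacle is verifying that $\phi$ is skew-symmetric in the sense of the commutative triangle, i.e.\ that $\delta_{\tilde{E}}\circ D(\phi)=-\phi$. The strategy is to reduce this identity on $\calb_X$ to the identity $\psi^{\vee}=-\psi$ on $E$ by naturality: the unit $\delta\colon D^{2}\xrightarrow{\sim}\mathrm{id}$ should be compatible, via the identifications $D(\tilde{E})\simeq \mathbb{L}_{\calo}(E^{*})$ and $D^{2}(\tilde{E})\simeq \mathbb{L}_{\calo}(E^{**})$ obtained by iterating the previous step, with $\mathbb{L}_{\calo}$ applied to the classical biduality $\mathrm{ev}\colon E\xrightarrow{\sim} E^{**}$. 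The bulk of the work is therefore a careful but essentially formal diagram chase comparing the functors $D\circ \mathbb{L}_{\calo}$ and $\mathbb{L}_{\calo}\circ (-)^{\vee}$; once this compatibility is in place, the sign in $\psi^{\vee}=-\psi$ propagates directly to the sign on the bottom arrow of the required triangle.
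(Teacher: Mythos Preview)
The paper does not actually prove this proposition: Section~\ref{section:Fano} is a survey of results due to Faenzi and Kuznetsov, and Proposition~\ref{prop:acyclic-selfdual} is stated without proof as a summary of \cite[Lemma~3.6]{K}. So there is no ``paper's own proof'' to compare against.

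That said, your proposal is correct and is essentially the argument Kuznetsov gives in \cite{K}. The key inputs are exactly the ones you identify: the symplectic form $\psi\colon E\xrightarrow{\sim}E^{*}$ with $\psi^{\vee}=-\psi$ coming from $\rk E=2$, $c_1(E)=0$; the computation $D(\tilde{E})\simeq \mathbb{L}_{\calo}(E^{*})$ obtained by dualizing the acyclic extension sequence and using $\mathbb{L}_{\calo}(\calo_X)=0$; and the naturality of $\delta$ allowing the sign in $\psi^{\vee}=-\psi$ to be transported through $\mathbb{L}_{\calo}$. The only point where you should be slightly more careful is the ``formal diagram chase'' at the end: the compatibility of $\delta$ with the biduality $\mathrm{ev}$ under the identifications $D^{2}(\tilde{E})\simeq \mathbb{L}_{\calo}(E^{**})$ is genuinely the content here, and in Kuznetsov's treatment this is packaged into the general statement that $D$ is an anti-autoequivalence of $\calb_X$ with $\delta$ induced by Grothendieck duality. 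Your outline is adequate but you would need to spell out that this compatibility holds (or cite it) rather than assert it.
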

As it turns out, an instanton bundle $E$ can be ``reconstructed" from its acyclic extension. Each vector bundle $F$ satisfying the properties of both Proposition \ref{prop:acyclic-selfdual} and Lemma \ref{lem:acyclic-inst} is indeed the acyclic extension of a \textit{unique} instanton bundle. 
\begin{theorem}\label{thm:reconstruction}
Let $F$ be a vector bundle on $X$ with $\ch(F)=(n, \: 0, \: -n,\:0)$ and such that $
H^{\bullet}(F)=H^{\bullet}(F(-1))=0$. Then $h^i(F^*)=0$ for $i>1$ and \mbox{$h^0(F^*)=h^1(F^*)\le n-2$}; if moreover  $h^0(F^*)=n-2$, then there exists a unique instanton bundle $E$ of charge $n$ such that $F\simeq \tilde{E}$.

\end{theorem}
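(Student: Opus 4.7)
The plan is to first derive the cohomology of $F^*$ from the hypotheses and Serre duality, then to reconstruct the instanton $E$ as the kernel of a natural morphism from $F$ to a trivial bundle.

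Since $K_X = \calo_X(-2)$, Serre duality gives $H^i(F^*(k)) \simeq H^{3-i}(F(-k-2))^{*}$. Taking $k = -1, -2$ and using $H^\bullet(F) = H^\bullet(F(-1)) = 0$, one obtains $H^\bullet(F^*(-1)) = H^\bullet(F^*(-2)) = 0$. To conclude that $h^i(F^*) = 0$ for $i \ge 2$, I would take two general sections $s_1, s_2 \in H^0(\calo_X(1))$ cutting out a smooth curve $C \subset X$ of arithmetic genus one (by adjunction, since $K_X = \calo_X(-2)$). The Koszul resolution
$$ 0 \lra \calo_X(-2) \lra \calo_X(-1)^{\oplus 2} \lra \calo_X \lra \calo_C \lra 0, $$
tensored by $F^*$ and split into two short exact sequences, together with the above vanishings, yields $H^i(F^*) \simeq H^i(F^*|_C)$ for every $i$, which vanishes for $i \ge 2$ since $\dim C = 1$. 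A direct Riemann--Roch calculation from $\ch(F^*) = (n,0,-n,0)$ gives $\chi(F^*(t)) = \tfrac{n H_X^3}{6}\, t(t+1)(t+2)$ (the roots match the known values $\chi(F) = \chi(F(-1)) = 0$ from the hypotheses together with $\chi(F(-2)) = 0$, the latter a consequence of Serre duality $E \simeq E^*$ applied to the defining sequence of $\tilde E$ for any instanton $E$ of charge $n$). In particular $\chi(F^*) = 0$, so $h^0(F^*) = h^1(F^*)$.

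For the upper bound $h^0(F^*) \le n-2$, set $V := H^0(F^*)$ and consider the morphism $\phi: F \to V^{*} \otimes \calo_X$ dual to the evaluation $V \otimes \calo_X \to F^*$. The key observation is that $H^0(F) = 0$ forbids $F$ from containing any line subsheaf $\calo_X(k) \hookrightarrow F$ with $k \ge 0$, since such a subsheaf would produce $0 \ne H^0(\calo_X(k)) \hookrightarrow H^0(F) = 0$. If $\dim V \ge n-1$, then $\ker \phi$ has rank at most one by rank comparison, and the image of $\phi$---being a subsheaf of the trivial bundle $V^{*} \otimes \calo_X$---has nonpositive first Chern class (its saturation $I^{sat} \hookrightarrow \calo_X^{\dim V}$ gives $\det I^{sat} \hookrightarrow \calo_X$, whence $c_1(I^{sat}) \le 0$); combined with $c_1(F) = 0$, this forces $c_1(\ker \phi) \ge 0$, producing a forbidden line subsheaf in the reflexive hull of $\ker \phi$. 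Hence $\dim V \le n-2$.

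Assuming now $h^0(F^*) = n-2$, set $E := \ker \phi$. The same saturation principle forces $\phi$ to be surjective, yielding the short exact sequence
$$ 0 \lra E \lra F \lra \calo_X^{\oplus n-2} \lra 0. $$
Thus $E$ is locally free of rank $2$ with $\ch(E) = (2, 0, -n, 0)$. Twisting by $\calo_X(-1)$ and using $H^\bullet(F(-1)) = H^\bullet(\calo_X(-1)) = 0$ (the latter by Kodaira vanishing and Serre duality) forces $H^1(E(-1)) = 0$, and a parallel argument gives $H^0(E) = 0$, which implies $\mu$-stability of $E$ since a rank-$2$ bundle with $c_1 = 0$ and no global sections on $X$ cannot admit a line subsheaf of nonnegative degree. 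Hence $E$ is an instanton bundle of charge $n$. Moreover, the long exact sequence of the displayed short exact sequence together with $H^\bullet(F) = 0$ shows that the connecting map $\C^{n-2} = H^0(\calo_X^{\oplus n-2}) \to H^1(E)$ is an isomorphism; under this identification combined with Lemma \ref{lem:cohom-inst}, the extension class of the sequence above is the identity in $\End(\C^{n-2})$, which identifies it with the defining extension of the acyclic extension $\tilde E = \mathbb{L}_{\calo_X}(E)$. Therefore $F \simeq \tilde E$, and uniqueness is immediate since $E$ is recovered as the kernel of the canonical morphism $\phi$. The main obstacle is the sharp bound $h^0(F^*) \le n-2$ together with the surjectivity of $\phi$ in the extremal case; both rely on the Chern-class and saturation analysis of subsheaves of $F$, systematically using $H^0(F) = 0$ to exclude line subsheaves of nonnegative degree.
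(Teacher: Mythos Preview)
The paper itself does not supply a proof of this theorem: Section~4.6 is a summary of Kuznetsov's results, and Theorem~\ref{thm:reconstruction} is stated there without argument, with the reader directed to \cite{K}. So there is no proof in the paper to compare against, and I evaluate your argument on its own merits.

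The first block of your argument is fine. The Koszul restriction to a general elliptic curve $C$ (using $H^{\bullet}(F^*(-1))=H^{\bullet}(F^*(-2))=0$ from Serre duality) correctly yields $h^i(F^*)=0$ for $i\ge 2$, and the Riemann--Roch computation $\chi(F^*(t))=\tfrac{nH_X^3}{6}\,t(t+1)(t+2)$ gives $h^0(F^*)=h^1(F^*)$.

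The gap is at the crucial step, the inequality $h^0(F^*)\le n-2$. Your assertion ``If $\dim V \ge n-1$, then $\ker\phi$ has rank at most one by rank comparison'' is not justified. With $m=\dim V$ and $r=\rk(\im\phi)$ one has $\rk(\ker\phi)=n-r$, and rank comparison only furnishes the \emph{upper} bound $r\le\min(m,n)$; it gives no lower bound on $r$. Equivalently, you are implicitly assuming that the evaluation $ev\colon V\otimes\calo_X\to F^*$ is generically injective, but $\C$-linear independence of the sections does not imply $K(X)$-linear independence at the generic point. When $\ker\phi$ has rank $\ge 2$ your contradiction evaporates: a subsheaf of $F$ of rank $\ge 2$ with $c_1\ge 0$ need not contain any line subsheaf of nonnegative degree (any $\mu$-stable rank~$2$ bundle with $c_1=0$ and $H^0=0$ is a counterexample to that implication), so $H^0(F)=0$ does not finish the job. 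The very same issue recurs when you claim that ``the same saturation principle forces $\phi$ to be surjective'' in the case $m=n-2$: surjectivity of $\phi$ is equivalent to $r=n-2$, i.e.\ to generic injectivity of $ev$, which is precisely what has not been established.

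Once surjectivity of $\phi$ is granted, the remainder of your reconstruction of $E=\ker\phi$ as a stable charge-$n$ instanton, and the identification $F\simeq\tilde E$ via the universal extension, is correct. To repair the argument you need an independent input controlling the rank of $ev$ or, what amounts to the same thing here, the $\mu$-semistability of $F$; note that Lemma~\ref{lem:acyclic-inst} asserts semistability for genuine acyclic extensions, but in Theorem~\ref{thm:reconstruction} this must be extracted from the cohomological hypotheses alone, and your proof does not do so.
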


It is worth mentioning that also in this setting, we have that the ideal sheaves of lines $\cali_{\ell}, \ \ell\subset X$, share several common features with instantons, or better to say, with their acyclic extensions. 
It is immediate to prove that $\cali_{\ell}\in\calb_X$; moreover the following holds
\begin{proposition}
Ideal sheaves $\cali_{\ell}$ of lines $\ell\subset X$ are fixed by $D$: $D(\cali_{\ell})\simeq\cali_{\ell}$. Moreover this isomorphism is skew-symmetric (in the sense of Proposition \ref{prop:acyclic-selfdual}).
\end{proposition}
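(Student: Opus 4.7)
The strategy is to compute $D(\cali_\ell)$ by unwinding the two functors that define $D$, and then to deduce skew-symmetry from the simplicity of $\cali_\ell$.

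First I would apply the contravariant functor $R\inhom(-,\calo_X)$ to the structure sequence $0 \to \cali_\ell \to \calo_X \to \calo_\ell \to 0$, obtaining the distinguished triangle
$$R\inhom(\calo_\ell, \calo_X) \to \calo_X \to R\inhom(\cali_\ell, \calo_X) \to R\inhom(\calo_\ell, \calo_X)[1].$$
Since $\ell \subset X$ is a codimension-two locally complete intersection, one has $R\inhom(\calo_\ell, \calo_X) \simeq \det N_{\ell/X}[-2]$; and since $X$ is Fano of index $2$, adjunction on $\ell \simeq \p1$ together with $\omega_X|_\ell = \calo_\ell(-2) = \omega_\ell$ forces $\det N_{\ell/X} \simeq \calo_\ell$. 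After rotation, the triangle reads
$$\calo_X \to R\inhom(\cali_\ell, \calo_X) \to \calo_\ell[-1] \to \calo_X[1].$$
Applying the triangulated functor $\mathbb{L}_{\calo}$ and using $\mathbb{L}_{\calo}(\calo_X) = 0$, I would identify $D(\cali_\ell) \simeq \mathbb{L}_{\calo}(\calo_\ell[-1])$. Since $\Ext^\bullet(\calo_X, \calo_\ell) = \C$ in degree zero and the evaluation $\calo_X \to \calo_\ell$ is the natural surjection whose cone is $\cali_\ell[1]$, one obtains $\mathbb{L}_{\calo}(\calo_\ell) = \cali_\ell[1]$ and hence $D(\cali_\ell) \simeq \cali_\ell$, producing a natural isomorphism $\phi : D(\cali_\ell) \to \cali_\ell$.

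For skew-symmetry, note that $\cali_\ell$ is torsion-free of rank one on an integral projective variety, so $\End(\cali_\ell) = \C$ and $\Hom(D(\cali_\ell), \cali_\ell) \simeq \C$. The assignment $\psi \mapsto \delta_{\cali_\ell} \circ D(\psi)$ is an involution on this line (by the coherence of $\delta : D^2 \xrightarrow{\sim} \mathrm{id}$), so it acts as $\pm 1$; I would pin the sign to $-1$ by applying $R\inhom(-,\calo_X)$ once more to the triangle above and tracking the $[-1]$-shift on the connecting morphism via the standard Koszul sign conventions.

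The main obstacle is precisely this final sign check: the $[-1]$ shift must be propagated through the contravariant $R\inhom$ and then through $\mathbb{L}_{\calo}$, and bookkeeping the triangulated signs is delicate. A cleaner alternative would be to exploit the Serre-duality structure on $\calb_X$: since its Serre functor is of Calabi--Yau type up to shift, any $D$-self-dual simple object in $\calb_X$ should automatically carry a canonical skew-symmetric form, giving the desired property for $\cali_\ell$ without a direct sign computation.
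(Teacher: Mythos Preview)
The paper itself does not prove this proposition; it is stated as a result of Kuznetsov \cite{K} in a survey section. So there is no proof to compare against, and the question reduces to whether your argument stands on its own.

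Your computation of the isomorphism $D(\cali_\ell)\simeq\cali_\ell$ is correct and cleanly executed: the identification $R\inhom(\calo_\ell,\calo_X)\simeq\det N_{\ell/X}[-2]\simeq\calo_\ell[-2]$ via adjunction on an index-2 Fano, followed by $\mathbb{L}_{\calo}(\calo_\ell)=\cali_\ell[1]$, is exactly the natural route and matches what one finds in \cite{K}.

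The gap is in the skew-symmetry. You correctly reduce to showing that the involution $\psi\mapsto\delta_{\cali_\ell}\circ D(\psi)$ on the one-dimensional space $\Hom(D(\cali_\ell),\cali_\ell)$ is $-\mathrm{id}$ rather than $+\mathrm{id}$, but you do not actually determine the sign. The suggestion to ``track the $[-1]$-shift via Koszul sign conventions'' is not a proof: the sign in question is not simply the $(-1)$ coming from a single shift, but the cumulative effect of the contravariant dualisation, the rotation of the triangle, and the identification $\delta$ of $D^2$ with the identity, and these must be made explicit. Your proposed alternative via a Calabi--Yau structure on $\calb_X$ is also not a valid shortcut: the Serre functor of $\calb_X$ depends on the degree of $X$ (for instance, $\calb_X$ is equivalent to $D^b$ of a genus-2 curve when $H^3=4$, and to a fractional Calabi--Yau category when $H^3=3$), so there is no uniform ``CY up to shift'' statement that automatically produces a skew form on every $D$-self-dual simple object. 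In Kuznetsov's treatment the sign is pinned down by an explicit computation with the chosen normalisation of $\delta$; you would need to carry that out here.
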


\section{$h$-instanton sheaves on projective varieties}\label{section:h-inst}

More recently, several authors have further extended the notion of instanton bundles to beyond Fano 3-folds. This was, once again, initially motivated by gauge-theory: recall that $\p2$, just like $S^4$ also has the structure of a quarternionic K\"ahler manifold, and its twistor space is the full flag manifold $F(0,1,2)$ of points and lines in $\cp2$; the Atiyah--Ward correspondence provides in this case a correspondence between quaternionic instantons on $\cp2$ and a class of holomorphic bundles on $F(0,1,2)$, and these are again called instanton bundles.

Following \cite{DFlag}, Marchesi, Malaspina and Pons-Llopis provided a mathematical treatment of the instanton bundles on  $F(0,1,2)$ in \cite{MMPL}. This case study further motivated the introduction of the notion of instanton bundles on other Fano threefolds with Picard rank larger than 1, see for instance \cite{AM20,ACG,ACG21,CCGM}. 

In the latest development, Antonelli and Casnati defined a class of sheaves on a projective scheme $X$ with respect to an ample and globally generated line bundle $\mathcal{O}_X(h)$ via certain cohomological vanishing conditions that generalize the example that have been previously studied (projective space, Fano threefolds of Picard rank 1); they show that these sheaves can be constructed via monads, so it is reasonable to call them instanton sheaves on $X$. In this section, we review the definition and constructions of these instanton sheaves following \cite{AC}; let us start with the definition proposed by these authors.

\begin{definition}{\label{defn:h-instanton}}{\cite[Definition 1.3 and Theorem 1.4]{AC}}
Let $X$ be an irreducible projective scheme of dimension $n$ ($n\geq 1$) endowed with an ample and globally generated line bundle $\mathcal{O}_X(h)$. If $\mathcal{E}$ is a coherent sheaf on $X$, $k$ a non–negative integer and $\delta \in \{0, 1\}$, then $\mathcal{E}$ is called an $h$-instanton sheaf if the following assertions hold:
	\begin{enumerate}
	\item $h^0(\mathcal{E}(-h))=h^n(\mathcal{E}((\delta-n)h))=0$;

	\item $h^i(\mathcal{E}(-(i+1)h))=h^{n-1}(\mathcal{E}((\delta-n+i)h))= 0$ if $1\leq i\leq n-2$;

	\item $\delta h^i(\mathcal{E}(-ih))=0$ for $2\leq i\leq n-2$;

	\item $h^1(\mathcal{E}(-h))=h^{n-1}(\mathcal{E}((\delta-n)h))=k$;

	\item $\delta(\chi(\mathcal{E})-(-1)^n\chi(\mathcal{E}(-nh))))=0$.
	\end{enumerate}

\end{definition}

The following chart gives the cohomologies of an $h-$instanton sheaf $\mathcal{E}$ with defect $\delta$ and quantum number $k$ when $n\geq 4$.

\begin{center}
\begin{tabular}{@{}l|ccccccccc@{}}
\toprule
$t$ &$\cdots$& $-n-1$ & $-n$ & $-n+1$ & $\cdots$ & $-2$& $-1$& $0$&$\cdots$\\
\hline
$h^n(\mathcal{E}(t))$&$\cdots$&$\ast$&\diagbox{$\ast$}{$0$}&$0$&$\cdots$&$0$&$0$&$0$&$\cdots$\\
\hline
$h^{n-1}(\mathcal{E}(t))$&$\cdots$&$\ast$&\diagbox{$k$}{$\ast$}&\diagbox{$0$}{$k$}&$\cdots$&$0$&$0$&$0$&$\cdots$\\
\hline
$h^{n-2}(\mathcal{E}(t))$&$\cdots$&$0$&$0$&$0$&$\cdots$&$0$&$0$&$0$&$\cdots$\\
$\vdots$& & & & & & & & & \\
$h^2(\mathcal{E}(t))$&$\cdots$&$0$&$0$&$0$&$\cdots$&$0$&$0$&$0$&$\cdots$\\
$h^1(\mathcal{E}(t))$&$\cdots$&$0$&$0$&$0$&$\cdots$&$0$&$k$&$\ast$&$\cdots$\\
$h^0(\mathcal{E}(t))$&$\cdots$ &$0$&$0$&$0$&$\cdots$&$0$&$0$&$\ast$&$\cdots$\\
\bottomrule
\end{tabular}
\end{center}

A cell $\begin{tabular}{|c|}\hline\diagbox{a}{b}\\\hline\end{tabular}$ in the above chart means that the cohomology is equal to $b$ when $\delta=0$, and is equal to $a$ when $\delta=1$.
An $h$-instanton sheaf $\mathcal{E}$ with $\delta=0$ is called an ordinary instanton, and $\mathcal{E}$ is called non-ordinary if $\delta=1$.

\begin{remark}{\label{rmk:h-inst unstable}}
Definition \ref{defn:h-instanton} doesn't require stability for an $h-$instanton sheaf. Even for the case that $X$ is smooth and $\mathcal{E}$ is an $h-$instanton bundle of rank $2$, $\mathcal{E}$ could be strictly $\mu-$semistable or $\mu-$unstable which is similar to the situation for an instanton sheaf on $\pn$ (Remark \ref{rmk:mu-unstable}). We refer to \cite[Proposition 8.4]{AC} for more details.   
\end{remark}

\subsection{$h$-instanton sheaves on projective spaces and projective schemes}{\label{subsection:h-instOnProj}}

When $X\cong \mathbb{P}^n$, we choose the ample line bundle to be $\mathcal{O}_{\mathbb{P}^n}(h):= \mathcal{O}_{\mathbb{P}^n}(1)$. Then the definition of an ordinary $h-$instanton sheaf coincides with Definition \ref{defn:instantons} in section \ref{section:perverse}, meaning that if $\mathcal{E}$ is an ordinary $h$-instanton sheaf with rank $r$ and charge $c$, then $\mathcal{E}$ is an instanton sheaf in the sense of \cite{J-inst}. Thanks to Theorem \ref{thm:monadic presentation}, $\mathcal{E}$ has the following monadic presentation, and conversely, the cohomology of such a presentation is an $h-$instanton sheaf provided that it's torsion free.

$$
0\to \mathcal{O}^{\oplus c}_{\mathbb{P}^n}(-1)\to \mathcal{O}^{\oplus 2r+c}_{\mathbb{P}^n} \to \mathcal{O}^{\oplus c}_{\mathbb{P}^n}(1)\to 0
$$

This property generalizes to non-ordinary $h-$instanton sheaves. It is proved in \cite[Proposition 3.2]{AC} that a non-ordinary $h-$instanton sheaf is the cohomology of the monad given below, and the cohomology of such a monad is a non-ordinary $h-$instanton provided $b_1, b_2\geq \chi(\mathcal{E})$.

$$
0\to \mathcal{M}^{-1}\to \mathcal{M}^{0}\to \mathcal{M}^{1}\to 0,
$$
in which

\begin{align*}
\mathcal{M}^{-1}&:=\mathcal{O}_{\mathbb{P}^n}(-1)^{\oplus b_1-\chi(\mathcal{E})}\\
\mathcal{M}^{0}&:=\begin{cases} \mathcal{O}^{\oplus b_0}_{\mathbb{P}^n}\oplus \Omega^1_{\mathbb{P}^n}(1)^{\oplus k}\oplus \Omega^{n-1}_{\mathbb{P}^n}(n-1)^{\oplus k}\oplus \mathcal{O}_{\mathbb{P}^n}(-1)^{\oplus b_1}& \text{if} \ n\geq 3\\
\mathcal{O}^{\oplus b_0}_{\mathbb{P}^n}\oplus \Omega^1_{\mathbb{P}^n}(1)^{\oplus k}\oplus \mathcal{O}_{\mathbb{P}^n}(-1)^{b_1}& \text{if} \ n=2
\end{cases}\\
\mathcal{M}^{1}&:=\mathcal{O}_{\mathbb{P}^n}^{\oplus b_0-\chi(\mathcal{E})}.
\end{align*}

For a general projective scheme $X$ endowed with an ample and globally generated line bundle $\mathcal{O}_X(h)$, the full linear series of $\mathcal{O}_X(h)$ induces a finite map $\varphi_{|\mathcal{O}_X(h)|}\colon X\to \mathbb{P}^N$ for some $N\in \mathbb{Z}_{>0}$. Then projecting from $N-n$ general points in $\mathbb{P}^N$ induces a finite map $p\colon X\to \mathbb{P}^n$ with the property that $p^*(\mathcal{O}_{\mathbb{P}^n}(1))=\mathcal{O}_X(h)$. Using the fact that $R^ip_*(\mathcal{E})=0$ ($i\geq 1$) for any finite map $p$ and the projection formula, it is proved (\cite[Theorem 1.4]{AC}) that if $\mathcal{E}$ is an $h$-instanton sheaf on $X$, then $p_*(\mathcal{E})$ is a $\mathcal{O}_{\mathbb{P}^n}(1)-$instanton sheaf on $\mathbb{P}^n$. More generally, an $h$-instanton sheaf is preserved by a push-forward along a finite map.

\subsection{Monadic presentations of h-instanton bundles}
We have seen that monadic presentations exist for instanton sheaves on $\pn$ (Theorem \ref{thm:monadic presentation}), perverse instanton sheaves on $\pn$ (Definition \ref{defn:perverse}), and for instanton sheaves on some Fano threefolds (Theorem \ref{thm:monadFano}). 
For a smooth n-fold $X$ ($n\geq 3$), endowed with a very ample line bundle $\calo_X(h)$, assume that $X$ is ACM with respect to this line bundle $\calo_X(h)$. It is proved in \cite[Theorem 1.7]{AC} that a vector bundle $\mathcal{E}$ on $X$ is an $h-$instanton bundle with defect $\delta\in \{0,1\}$ and quantum number $k\in \mathbb{Z}_{\geq 0}$ if and only if it is the cohomology of a monad of certain kind. In this subsection, we recall the monadic presentations for $h-$instanton bundles on such a scheme $X$ with an additional constraint that $h^0(X, \omega_X((n-1)h))=0$. With this extra condition, a monad will be more neatly presented, and it satisfies some duality property. We refer to \cite[Theorem 1.7]{AC}, \cite[Theorem 3.3]{JVM-10} and \cite{CM} for technical details on the construction, and the construction on a general ACM scheme and on a quadric hypersurface.

Firstly, let us recall that a smooth variety $X$ with a very ample line bundle $\mathcal{O}_X(h)$ is ACM if 
\begin{enumerate}
	\item $h^i(\mathcal{O}_X(th))=0$ for $i=1,2,..., n-1$, $t\in \mathbb{Z}$;

	\item $h^i(\mathcal{I}_{X|\mathbb{P}^N}(t))=0$ (where the embedding is $|\mathcal{O}_X(h)|: X\hookrightarrow \mathbb{P}^N$).
\end{enumerate}

A sheaf $\mathcal{E}\in Coh(X)$ is called Ulrich if 
\begin{enumerate}
	\item $h^0(\mathcal{E}(-(t+1)h))=h^n(\mathcal{E}((t-n)h))=0$ for $t\geq 0$ 
	\item $h^i(\mathcal{E}(t))=0$ for $i=1,2,...,n-1$ and $t\in \mathbb{Z}$
\end{enumerate}

If $X$ is an ACM scheme, and it satisfies an additional vanishing condition $h^0(X, \omega_X((n-1))h)=0$, then the monadic presentation of an ordinary h-instanton bundle $\mathcal{E}$ will be in the following form (\cite[Corollary 7.2]{AC})
\begin{equation}{\label{monadulrich}}
0\to \mathcal{C}^{U, h}\to \mathcal{B}\to \mathcal{C} \to 0
\end{equation}
where $\mathcal{C}=\mathcal{O}_X^{\oplus k}$, and $\mathcal{B}$ is a Ulrich bundle. 
The sheaf $\mathcal{C}^{U, h}$ is the Ulrich dual sheaf of $\mathcal{C}$ in the sense that  $\mathcal{C}^{U, h}:= \mathcal{C}^{\vee}((n+1)h+K_X)$.

Indeed, for a smooth scheme $X$, the vanishing condition $h^0(X, \omega_X((n-1))h)=0$ holds only when $X$ falls into the following three cases: $X\cong \mathbb{P}^n$; $X$ is a smooth quadric hypersurface; or $X$ is a scroll over a smooth curve $B$. 

\subsubsection{}
If $X=\mathbb{P}^n$, the monadic presentations for both ordinary (section \ref{section:perverse}) and non-ordinary h-instanton sheaves (section \ref{subsection:h-instOnProj}) are shown in the previous sections.

\subsubsection{}
If $X\subset \mathbb{P}^{n+1}$ is a smooth quadric hypersurface, let $\mathcal{O}_X(h):=\mathcal{O}_{\mathbb{P}^n}(1)|_{X}$, $\mathcal{S}$ (for $n$ odd) and $\mathcal{S}'$, $\mathcal{S}''$ (for $n$ even) be the spinor bundles. Then, depending on the parity of $n$, monad (\ref{monadulrich}) can be written explicitely as one of the following two monads

$$0\to \mathcal{O}_X^{\oplus k} \to S(h)^{\oplus s}\to \mathcal{O}_X(h)^{\oplus k}\to 0 \quad \text{for\ }n= \text{odd} $$
in which $s=h^0(\mathcal{E}\otimes \mathcal{S})-h^1(\mathcal{E}\otimes\mathcal{S})+2^{\left[\frac{n-1}{2}\right]}k$
$$0\to \mathcal{O}_X^{\oplus k} \to S'(h)^{\oplus s'}\oplus S''(h)^{\oplus s''}\to \mathcal{O}_X(h)^{\oplus k}\to 0 \quad \text{for\ } n=\text{even}$$
in which $s'$ and $s''$ are given as follows:
\begin{align*}
s'&=\begin{cases}
h^0(\mathcal{E}\otimes \mathcal{S}')-h^1(\mathcal{E}\otimes\mathcal{S}')+2^{\left[\frac{n-1}{2}\right]}k& \text{if} \ n \equiv 0 \ (\text{mod} 4)\\
h^0(\mathcal{E}\otimes \mathcal{S}'')-h^1(\mathcal{E}\otimes\mathcal{S}'')+2^{\left[\frac{n-1}{2}\right]}k& \text{if} \ n \equiv 2 \ (\text{mod} 4)
\end{cases},\\
s''&=\begin{cases}
h^0(\mathcal{E}\otimes \mathcal{S}'')-h^1(\mathcal{E}\otimes\mathcal{S}'')+2^{\left[\frac{n-1}{2}\right]}k& \text{if} \ n \equiv 0 \ (\text{mod} 4)\\
h^0(\mathcal{E}\otimes \mathcal{S}')-h^1(\mathcal{E}\otimes\mathcal{S}')+2^{\left[\frac{n-1}{2}\right]}k& \text{if} \ n \equiv 2 \ (\text{mod} 4)
\end{cases}.
\end{align*}

For the cases $n=3,4,5$, the above monads coincide with the ones in \cite{F}, \cite{SA} and \cite{O}.

\subsubsection{}
If $X$ is a scroll, i.e., $X=\mathbb{P}(\mathcal{G})$ where $\mathcal{G}$ is a locally free sheaf on a smooth curve $B$, define $\mathcal{O}_X(h):=\mathcal{O}_{\mathbb{P}(\mathcal{G})}(1)$, and let $f$ be the fiber of the projection $\pi\colon X=\mathbb{P}(\mathcal{G})\to B$ at a closed point. In this case, $X$ is a variety of minimal degree in $\mathbb{P}^N$, and Ulrich bundles on such varieties are described in \cite{AHMP}. Furthermore, $X$ is ACM with respect to $\mathcal{O}_X(h)$ only when $B\cong \mathbb{P}^1$. We review two examples for dimensions $n=3$ and $4$ below, and we refer to \cite[Example 7.7 and 7.8]{AC} for more details.

\begin{example}
For $n=3$, monad (\ref{monadulrich}) for an ordinary $h$-instanton bundle will be in the following form
$$
0\to \mathcal{O}_X((d-2)f)^{\oplus k}\to \mathcal{B} \to \mathcal{O}_X(h)^{\oplus k} \to 0
$$
where $\mathcal{B}$ is a Ulrich bundle on $X$. Thanks to \cite[Theorem 4.7]{AHMP}, we know that the Ulrich bundle $\mathcal{B}$ fits into the short exact sequence
$$
0\to \mathcal{B}_2 \to \mathcal{B} \to \mathcal{O}_X((d-1)f)^{\oplus s_3}\to 0,
$$
where $\mathcal{B}_2$ is given as an extension
$$
0\to \mathcal{O}_X(h-f)^{\oplus s_1}\to \mathcal{B}_2 \to \Omega_{X|\mathbb{P}^1}^1(2h-f)^{\oplus s_2} \to 0
$$
for some $s_1, s_2, s_3\in \mathbb{Z}_{>0}$. 
\end{example}

\begin{example} For $n=4$, let $X$ be the image of the Segre embedding $\mathbb{P}^1\times\mathbb{P}^3\hookrightarrow\mathbb{P}^7$. Then $X$ is a rational normal scroll with $\displaystyle\mathcal{G}\cong \bigoplus_{i=0}^{3}\mathcal{O}_{\mathbb{P}^1}(1)$. Let $p\colon X\to \mathbb{P}^3$ be the projection, and we have $\Omega^1_{X|\mathbb{P}^1}\cong p^*(\Omega^1_{\mathbb{P}^3})$.

If $\mathcal{E}$ is an ordinary $h$-instanton bundle with rank $r$ and quantum number $k$, then monad (\ref{monadulrich}) becomes
$$0\to \mathcal{O}_X(2f)^{\oplus k}\to \mathcal{B} \to \mathcal{O}_X(h)^{\oplus k} \to 0$$
where $\mathcal{B}$ fits into the short exact sequence
$$0\to \mathcal{O}_X(h-f)^{\oplus s_1}\oplus p^*\Omega^1_{\mathbb{P}^3}(2h-f)^{\oplus s_2} \to \mathcal{B}\to p^*\Omega^1_{\mathbb{P}^3}(3h-f)^{\oplus s_2}\oplus \mathcal{O}_X(3f)^{\oplus s_4} \to 0$$
for some $s_1, s_2, s_3, s_4 \in \mathbb{Z}_{>0}$. 
\end{example}

\subsection{Examples of $h$-instanton bundles}

In this subsection, we review some constructions of (orientable) $h$-instanton bundles on smooth varieties of low dimensions ($n \leq 3$) and on scrolls. Firstly, recall that a rank 2 $h$-instanton bundle $\mathcal{E}$ on $X$ is orientable if it has defect $\delta\in \{0,1\}$ and $\displaystyle c_1(\mathcal{E})=(n+1-\delta)h+K_X$ in $A^1(X)\cong \Pic(X)$. 

\subsubsection{} When $X$ is a smooth curve endowed with a globally generated ample line bundle $\mathcal{O}_X(h)$, an $h$-instanton sheaf $\mathcal{E}$ is necessarily locally free. Moreover, let $g$ be the genus of $X$, and choose a non-effective divisor $\theta\in \Pic^{g-1}(X)$. Then, by definition  $\mathcal{O}_X(\theta+h)^r$ and $(\mathcal{O}_X(\theta)\oplus \mathcal{O}_X(\theta+h))^r$ are respectively, an ordinary $h$-instanton bundle of rank $r$ and an non-ordinary $h$-instanton bundle of rank $2r$. 

In particular, if $X\cong\mathbb{P}^1$, an $h$-instanton sheaves $\mathcal{E}$ is 
$$\mathcal{E}=\left\{
\begin{array}{ll}
\mathcal{O}_{\mathbb{P}^1}^{\oplus \chi(\mathcal{E})}& \text{if} \ \delta=0 \\
(\mathcal{O}_{\mathbb{P}^1}\oplus \mathcal{O}_{\mathbb{P}^1}(-1))^{\oplus \chi(\mathcal{E})}& \text{if} \ \delta=1 \\
\end{array}
\right.
$$
The quantum number is this case is $k=\delta\chi(\mathcal{E})$.

\subsubsection{} When $X$ is a smooth surface, rank 2 orientable $h$-instanton bundles with defect $\delta\in \{0, 1\}$ and large quantum number always exist. Next, we briefly recall their construction for the case that the Kodaira dimension $\kappa(X)$ is $-\infty$ (\cite[Example 6.10]{AC}), and we refer to \cite[Example 6.10]{AC} and \cite{ESW} for the cases when $\kappa(X)=0,1,2$.

For $\kappa(X)=-\infty$, let $\mathcal{O}_X(h)$ be a very ample line bundle on $X$ with $h^0(X, \mathcal{O}_X(h))=N+1$. A rank $2$ $h$-instanton bundle can be constructed by the Serre correspondence in the following extension
$$
0\to \mathcal{O}_X \to \mathcal{E} \to \mathcal{I}_{Z|X}((1-\delta)h-K_X) \to 0,
$$ 
where $Z\subset X$ is a $0-$dimensional scheme with degree$(Z)\geq (1-\delta)(N+1)+1$.

\subsubsection{} When $X$ is a Fano threefold of Picard rank $1$, define $\mathcal{O}_X(h):=\mathcal{O}_X(H)$ where the divisor $H$ is an ample generator of $\Pic(X)$ (we call $\mathcal{O}_X(H)$ the fundamental line bundle in this case). Denote the index of $X$ by $i_X$ ($i_X=1,2,3,4$). According to \cite[Definition 1.1]{ACG}, we call a rank 2 vector bundle $\mathcal{E}$ on $X$ a \textit{classical instanton bundle} if

\begin{enumerate}
\item $c_1(\mathcal{E})=-\epsilon h$ with $\epsilon\in \{0,1\}$
\item $h^0(\mathcal{E})=h^1(\mathcal{E}(-q^{\epsilon}_Xh))=0$ where $\displaystyle q^{\epsilon}_X:=\left[\frac{i_X+1-\epsilon}{2}\right]$.
\end{enumerate} 

\begin{remark}
Comparing to the other two notions of instanton sheaves on a Fano variety (Definitions \ref{defn:inst-F} and \ref{defn:inst-CJ}), we don't require that $c_1(\mathcal{E})=-e_X$ for $\mathcal{E}$ being a \textit{classical instanton bundle} in this context. There is no constraint on stability of $\mathcal{E}$ either (Remark \ref{rmk:h-inst unstable}).  
\end{remark}

The following proposition shows the relation between \textit{classical instanton bundles} and $h$-instanton bundles of rank $2$.

\begin{proposition} \cite[Proposition 8.6]{AC}
Let $X$ be a Fano threefold of Picard rank 1, endowed with a very ample fundamental line bundle $\mathcal{O}_X(h)$. If $\mathcal{E}$ is a rank 2 vector bundle with $c_1(\mathcal{E})=(4-\delta-i_X)h$ where $\delta =\{0, 1\}$, then the following assertions hold:
\begin{enumerate}
	\item If $\mathcal{E}$ is an h-instanton bundle, then its defect is $\delta$ and 
	\begin{enumerate}
		\item if $(i_X, \delta)\not\in \{(4,0), (4,1), (3,1)\}$, then $\mathcal{E}_{norm, h}$ is a classical instanton bundle, where $\mathcal{E}_{norm, h}:=\mathcal{E}\left(-\left[\frac{c_1(\mathcal{E})+1}{2}\right]h\right)$.
		\item if $(i_X, \delta)\in \{(4,0), (4,1), (3,1)\}$, then $\mathcal{E}_{norm, h}$ is a classical instanton bundle if and only if $h^0(\mathcal{E})=0$.
	\end{enumerate}
	\item If $\mathcal{E}_{norm, h}$ is a classical instanton bundle and 
	\begin{enumerate}
		\item if $(i_X, \delta)\not\in \{(1, 0)\}$, then $\mathcal{E}$ is an h-instanton bundle.
		\item if $(i_X, \delta)\in \{(1, 0)\}$, then $\mathcal{E}$ is an h-instanton bundle with defect $\delta$ if and only if $h^0(\mathcal{E}_{norm, h})=0$.
	\end{enumerate}
\end{enumerate}   
\end{proposition}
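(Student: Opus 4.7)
The plan is to exploit the rank-2 self-duality $\mathcal{E}^\ast\cong\mathcal{E}(-c_1(\mathcal{E}))$. Combined with $\omega_X=\mathcal{O}_X(-i_Xh)$ and the hypothesis $c_1(\mathcal{E})=(4-\delta-i_X)h$, Serre duality on the Fano threefold $X$ yields the symmetry
$$h^i(\mathcal{E}(th)) = h^{3-i}(\mathcal{E}((\delta-4-t)h)) \qquad \text{for all } i,t\in\mathbb{Z}.$$
My first step is to apply this symmetry to each item in Definition \ref{defn:h-instanton} with $n=3$: the vanishing pairs $(h^0(\mathcal{E}(-h)),\,h^3(\mathcal{E}((\delta-3)h)))$ and $(h^1(\mathcal{E}(-2h)),\,h^2(\mathcal{E}((\delta-2)h)))$ coincide; the identity $h^1(\mathcal{E}(-h))=h^2(\mathcal{E}((\delta-3)h))$ required in condition (4) is automatic; and the Euler-characteristic condition in (5) for $\delta=1$ drops out since the symmetry applied to $\chi$ gives $\chi(\mathcal{E})+\chi(\mathcal{E}((\delta-4)h))=0$, which reads $\chi(\mathcal{E})+\chi(\mathcal{E}(-3h))=0$ when $\delta=1$. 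Hence the full $h$-instanton condition with defect $\delta$ reduces to just the two vanishings $h^0(\mathcal{E}(-h))=0$ and $h^1(\mathcal{E}(-2h))=0$.

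Next, set $c:=4-\delta-i_X$, $m:=\left[\frac{c+1}{2}\right]$, and $\epsilon:=2m-c\in\{0,1\}$, so that $\mathcal{F}:=\mathcal{E}_{norm,h}=\mathcal{E}(-mh)$ has $c_1(\mathcal{F})=-\epsilon h$. Eliminating $i_X$ via $i_X=4-\delta-2m+\epsilon$, one computes
$$q_X^\epsilon=\left[\frac{i_X+1-\epsilon}{2}\right]=\left[\frac{5-\delta}{2}\right]-m=2-m,$$
since $\left[\frac{5-\delta}{2}\right]=2$ for both $\delta=0,1$. Rewriting the two reduced $h$-instanton conditions in terms of $\mathcal{F}$ gives $h^0(\mathcal{F}((m-1)h))=0$ and $h^1(\mathcal{F}(-q_X^\epsilon h))=0$; the second of these matches the classical instantonic vanishing exactly.

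The final step compares $h^0(\mathcal{F}((m-1)h))=0$ with the classical condition $h^0(\mathcal{F})=0$. Since $\mathcal{O}_X(h)$ is globally generated and $\mathcal{F}$ is locally free, multiplication by a nonzero section of $\mathcal{O}_X((m-1)h)$ produces an injection $H^0(\mathcal{F})\hookrightarrow H^0(\mathcal{F}((m-1)h))$ when $m\geq 1$, and symmetrically $H^0(\mathcal{F}((m-1)h))\hookrightarrow H^0(\mathcal{F})$ when $m\leq 1$. Tabulating $m$ across the eight pairs $(i_X,\delta)\in\{1,2,3,4\}\times\{0,1\}$ yields $m=0$ precisely for $(i_X,\delta)\in\{(4,0),(4,1),(3,1)\}$, $m=2$ only for $(1,0)$, and $m=1$ in the remaining cases. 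In the balanced cases ($m=1$) both conditions coincide, yielding parts (1a) and (2a). When $m=0$, the classical condition $h^0(\mathcal{F})=0$ is strictly stronger than $h^0(\mathcal{F}(-h))=0$, accounting for the additional hypothesis needed in (1b). When $m=2$, the reduced $h$-instanton condition $h^0(\mathcal{F}(h))=0$ is strictly stronger than $h^0(\mathcal{F})=0$, giving the exceptional case (2b). The only real obstacle is spotting the clean identity $m+q_X^\epsilon=2$, which unifies the argument across the four values of $i_X$; it relies crucially on the specific normalization $c_1(\mathcal{E})=(4-\delta-i_X)h$ imposed in the hypothesis.
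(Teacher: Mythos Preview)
The paper does not prove this proposition; it is quoted verbatim from \cite[Proposition 8.6]{AC} as part of the survey in Section \ref{section:h-inst}, so there is no in-paper argument to compare against. Your proof is correct and follows exactly the approach one expects from \cite{AC}: the rank-2 self-duality plus Serre duality yields the symmetry $h^i(\mathcal{E}(th))=h^{3-i}(\mathcal{E}((\delta-4-t)h))$, which collapses Definition \ref{defn:h-instanton} for $n=3$ to the two vanishings $h^0(\mathcal{E}(-h))=0$ and $h^1(\mathcal{E}(-2h))=0$; the identity $m+q_X^{\epsilon}=2$ then matches the second vanishing with the classical instantonic condition, and the case split on $m\in\{0,1,2\}$ recovers the exceptional sets in (1b) and (2b).

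One remark on (2b): your computation shows that when $(i_X,\delta)=(1,0)$ the missing condition for $\mathcal{E}$ to be an $h$-instanton is $h^0(\mathcal{F}(h))=h^0(\mathcal{E}(-h))=0$, not $h^0(\mathcal{E}_{norm,h})=h^0(\mathcal{F})=0$ as written in the statement. Since $h^0(\mathcal{F})=0$ is already part of the classical instanton hypothesis, the condition as printed is vacuous; your version is the substantive one, and this appears to be a transcription slip in the survey rather than an error in your argument.
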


\subsubsection{} When $X$ is a scroll of dimension $n\geq 3$ on a smooth curve $B$, rank 2 ordinary $h$-instanton bundles on $X$ with quantum number $k$ can be constructed via the Serre correspondence. Following \cite[Section 10]{AC}, we briefly recall the construction below.

Let $\mathcal{G}$ be a locally free sheaf of rank $r\geq 3$ on $B$. Define $\mathcal{O}_X(h):=\mathcal{O}_{\mathbb{P}(\mathcal{G})}(1)$, and assume that $\mathcal{O}_X(h)$ is ample and globally generated. 
For each $k\in \mathbb{Z}_{\geq 0}$, take $k$ general points $b_i\in B$ ($i=1,2,\dots,k$), and let $L_i=\pi^{-1}(b_i)\cong \mathbb{P}^{n-1}$ be the fibers of the morphism $\pi\colon  X=\mathbb{P}(\mathcal{G})\to B$. Let $\theta\in \Pic^{g-1}(B)$ be a non-effective $\Theta$-characteristic of $B$, and $D$ be a divisor on $B$ such that $\mathcal{O}_B(D)=\det(\mathcal{G})$. 
Then the rank 2 vector bundle $\mathcal{E}$ in the following sequence is proved to be an ordinary orientable $\mu$-semistable $h$-instanton with quantum number $k$:

$$
0\to \mathcal{O}_X(\pi^*(D+\theta))\to \mathcal{E}\to \mathcal{I}_{Z|X}(h+\pi^*(\theta)) \to 0.
$$

\begin{remark}
If $X$ is a Fano threefold with Picard number $\rho_X\geq 2$, then $h$-instanton bundles can behave differently from the classical instanton bundle. We review the case when $X$ is the image of the Segre embedding $\mathbb{P}^1\times \mathbb{P}^1 \times \mathbb{P}^1\hookrightarrow \mathbb{P}^7$. We refer to \cite{CCGM}, \cite{MMPL} and \cite{AM} for classical instanton bundles on Fano varieties of higher Picard rank, and \cite[Section 9]{AC} for other examples of pathologies of $h$-instanton bundles on varieties with higher Picard rank. 

Let $X$ be the image of the Segre embedding $\mathbb{P}^1\times \mathbb{P}^1 \times \mathbb{P}^1\hookrightarrow \mathbb{P}^7$, and $p_i\colon  X\to \mathbb{P}^1$ ($i=1,2,3$) be the three projections. Define $\mathcal{O}_X(h_i):=p_i^*(\mathcal{O}_{\mathbb{P}^1}(1))$, and  $\mathcal{O}_X(h):= \mathcal{O}_X(h_1+h_2+h_3)$. Let $L\subset X$ be the intersection of general sections of $|h_2|$ and $|h_3|$, then $L\cong \mathbb{P}^1$. Choose $s\geq 0$ disjoint such curves, and let $Z$ be their union. There are rank 2 vector bundles $\mathcal{E}$ that fit the following sequence, and one can check that such vector bundles are orientable, ordinary, and simple $h$-instanton bundles but are $\mu$-unstable:
$$
0\to \mathcal{O}_X(h_1+3h_3)\to \mathcal{E}\to \mathcal{I}_{Z|X}(h_1+2h_2-h_3) \to 0.
$$
\end{remark}


\section{Instanton complexes via Bridgeland stability} \label{section:bridgeland}

For the remainder of the article we continue with the assumptions of Section \ref{section:Fano}, i.e.,  $X$ will denote a smooth projective Fano threefold of Picard number one with $\Pic(X)=\mathbb{Z}H$. As before, we write $K_X=-i_XH$ and $i_X=2q_X+e_X$, where $i_X,q_X,e_X\in\mathbb{Z}$, $i_X>0$, $q_X\geq 0$ and $e_X\in \{0,1\}$. For an object $E\in D^b(X)$ and $\beta\in \mathbb{R}$ we define the numerical twisted Chern character as
$$
v_{\beta}(E)=(\ch_0^{\beta}(E)H^3,\ch_1^{\beta}(E)H^2,\ch_2^{\beta}(E)H,\ch_3^{\beta}(E)),
$$
where
\begin{align*}
    \ch_0^{\beta}(E)&= \ch_0(E) \\
    \ch_1^{\beta}(E)&= \ch_1(E)-\beta \ch_0(E)H\\
    \ch_2^{\beta}(E)&= \ch_2(E)-\beta\ch_1(E)H+\frac{\beta^2}{2}\ch_0(E)H^2 \\
    \ch_3^{\beta}(E)&=\ch_3(E)-\beta\ch_2(E)H+\frac{\beta^2}{2}\ch_1(E)H^2-\frac{\beta^3}{6}\ch_0(E)H^3.
\end{align*}
As shown in \cite{Li}, for every $\beta,\alpha,s\in\mathbb{R}$ with $\alpha,s>0$, the function
$$
Z_{\beta,\alpha,s}(E)=-\ch_3^{\beta}(E)+\left(s+\frac{1}{6}\right)\ch_1^{\beta}(E)H^2+i\left(\ch_2^{\beta}(E)H-\frac{\alpha^2}{2}\ch_0(E)H^3\right)
$$
is the central charge of a stability condition $\sigma_{\beta,\alpha,s}=(Z_{\beta,\alpha,s},\mathcal{A}^{\beta,\alpha})$, whose supporting heart is constructed by the two-step tilting process described below. 
\begin{enumerate}
    \item[(a)] Start by considering the Mumford slope $$
    \mu_{\beta}(E)=\begin{cases}\frac{\ch_1^{\beta}(E)H^2}{\ch_0(E)H^3} & \text{if}\ \ch_0(E)\neq 0\\
    +\infty & \text{otherwise.}\end{cases}
    $$
    The following full additive subcategories of $\Coh(X)$ form a torsion pair:
    \begin{align*}
        \mathcal{F}_{\beta}&=\{E\in\Coh(X)\colon \mu_{\beta}(F)\leq 0\ \text{for all subsheaves}\ F\hookrightarrow E\}\\
        \mathcal{T}_{\beta}&=\{E\in\Coh(X)\colon \mu_{\beta}(Q)> 0\ \text{for all quotient sheaves}\ E\twoheadrightarrow Q\}.
    \end{align*}
     Tilting $\Coh(X)$ with respect to this torsion pair, we obtain the heart of a bounded t-structure on $D^b(X)$:
    $$
    \Coh^{\beta}(X):=\langle \mathcal{F}_{\beta}[1],\mathcal{T}_{\beta}\rangle.
    $$
    \item[(b)] Consider now the tilt slope $$
    \nu_{\beta,\alpha}(E)=\begin{cases}\frac{\ch_2^{\beta}(E)H-\frac{\alpha^2}{2}\ch_0(E)H^3}{\ch_1^{\beta}(E)H^2} & \text{if}\ \ch_1^{\beta}(E)H^2\neq 0\\
    +\infty & \text{otherwise.}\end{cases}
    $$
    As before, we have the torsion pair
    \begin{align*}
        \mathcal{F}_{\beta,\alpha}&=\{E\in\Coh^{\beta}(X)\colon \nu_{\beta,\alpha}(F)\leq 0\ \text{for all subobjects}\ F\hookrightarrow E\ \text{in}\ \Coh^{\beta}(X)\}\\
        \mathcal{T}_{\beta,\alpha}&=\{E\in\Coh^{\beta}(X)\colon \nu_{\beta,\alpha}(Q)> 0\ \text{for all quotients}\ E\twoheadrightarrow Q\ \text{in}\ \Coh^{\beta}(X)\}.
    \end{align*}
     Tilting $\Coh^{\beta}(X)$ with respect to this torsion pair, we obtain the desired heart
    $$
    \mathcal{A}^{\beta,\alpha}:=\langle \mathcal{F}_{\beta,\alpha}[1],\mathcal{T}_{\beta,\alpha}\rangle.
    $$
\end{enumerate}
We denote the corresponding Bridgeland slope on $\mathcal{A}^{\beta,\alpha}$ by
$$
\lambda_{\beta,\alpha,s}(E)=\frac{\ch_3^{\beta}(E)-\left(s+\frac{1}{6}\right)\alpha^2\ch_1^{\beta}(E)H^2}{\ch_2^{\beta}(E)H-\frac{\alpha^2}{2}\ch_0(E)H^3}.
$$

We want to propose a new definition of instanton object that ``extends'' the definitions by Faenzi \cite{F}, Kuznetsov \cite{K}, and Comaschi--Jardim \cite{CJ} included in Section \ref{section:Fano} and that can be satisfied by some objects in $D^b(X)$. We hope that our categorical approach will allow for a more systematical way of studying instanton moduli spaces. The idea will be to replace the $\mu$-semistablity and the vanishing conditions by some type of Bridgeland stability. To determine the appropriate stability condition recall that in the classical case of rank 2 instanton bundles on $\mathbb{P}^3$, the vanishing conditions giving the monad description are obtained after combining $\mu$-stability with the vanishing of one cohomology group, Serre duality, and the fact that a rank 2 vector bundle with trivial first Chern class is self-dual. 

Let us start by consider the functor 
\begin{equation}\label{dualityFunctor}
E\mapsto E^D:=R\mathcal{H}om(E,\mathcal{O}(-e_X))[2].
\end{equation}
We have the following:
\begin{proposition}\label{duality} \cite[Proposition 6.12]{JMM}. 
Suppose that $\nu_{\beta,\alpha}(E)\neq 0$, then
$$
E\ \text{is}\ \sigma_{\beta,\alpha,s}-\text{semistable}\ \Longleftrightarrow\ E^D\ \text{is}\ \sigma_{-\beta-e_X,\alpha,s}-\text{semistable}. 
$$
In particular, if $\beta_0:=-e_X/2$ and $\nu_{\beta_0,\alpha}(E)\neq 0$, then $E$ is $\sigma_{\beta_0,\alpha.s}$-semistable if and only if $E^D$ is $\sigma_{\beta_0,\alpha,s}$-semistable.
\end{proposition}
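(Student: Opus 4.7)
The plan is to reduce the statement to two separate checks: a numerical one about how the central charge transforms under the duality $D(-) = R\mathcal{H}om(-,\mathcal{O}(-e_X))[2]$, and a categorical one about how $D$ acts on the tilted hearts.

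First I would compute how the twisted Chern character transforms. Writing $\beta' = -\beta - e_X$ and using $\ch(E^\vee) = \ch(E)^{\vee}$ (negation of odd components), together with the shift by $[2]$ and the twist by $\mathcal{O}(-e_X)$, the chain of rewrites
$$
\ch(E^D)\cdot e^{-\beta' H} = \ch(E^\vee)\cdot e^{(\beta+e_X)H}\cdot e^{-\beta'H-e_XH}=\ch(E^\vee)\cdot e^{\beta H}
$$
yields, after expanding term by term,
$$
\ch_0^{\beta'}(E^D)=\ch_0^{\beta}(E),\ \ \ch_1^{\beta'}(E^D)=-\ch_1^{\beta}(E),\ \ \ch_2^{\beta'}(E^D)=\ch_2^{\beta}(E),\ \ \ch_3^{\beta'}(E^D)=-\ch_3^{\beta}(E).
$$
Plugging these identities into the definition of $Z_{\beta',\alpha,s}$ gives the key identity
$$
Z_{\beta',\alpha,s}(E^D) = -\overline{Z_{\beta,\alpha,s}(E)},
$$
and, restricted to the non-degenerate locus $\nu_{\beta,\alpha}(E)\neq 0$, the analogous identity for the tilt slope: $\nu_{\beta',\alpha}(E^D) = -\nu_{\beta,\alpha}(E)$.

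Next I would establish the categorical ingredient, namely that $D$ exchanges the hearts $\mathcal{A}^{\beta,\alpha}$ and $\mathcal{A}^{\beta',\alpha}$ up to a shift. The strategy is to work through the two-step tilt. For a $\nu_{\beta,\alpha}$-semistable object $F\in\Coh^{\beta}(X)$ with $\nu_{\beta,\alpha}(F)\neq 0$, one shows using Serre duality and the standard local-to-global spectral sequence that $F^D$ is again concentrated in a single cohomological degree with respect to the tilted heart; the computation above tells us that the slope of $F^D$ has the opposite sign, which pins down in which component of the torsion pair $\mathcal{T}_{\beta',\alpha},\mathcal{F}_{\beta',\alpha}[1]$ it lies. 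Piecing together via the HN-filtration of $E$ with respect to $\nu_{\beta,\alpha}$ shows that $E^D\in\mathcal{A}^{\beta',\alpha}$ whenever $E\in\mathcal{A}^{\beta,\alpha}$ avoids the vertical wall $\nu_{\beta,\alpha}=0$.

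With these two inputs in place, the equivalence of semistability is automatic: a short exact sequence $0\to A\to E\to B\to 0$ in $\mathcal{A}^{\beta,\alpha}$ induces a distinguished triangle whose long exact cohomology in $\mathcal{A}^{\beta',\alpha}$ is again short exact by the heart-preservation step, and the relation $Z_{\beta',\alpha,s}(-^D)=-\overline{Z_{\beta,\alpha,s}(-)}$ translates the inequality $\lambda_{\beta,\alpha,s}(A)\leq\lambda_{\beta,\alpha,s}(E)$ into its counterpart for $B^D$ and $E^D$ (slopes get negated, but the comparison of arguments of complex numbers is preserved under $z\mapsto -\bar z$). The ``in particular'' assertion is then the specialization $\beta = \beta_0 = -e_X/2$, at which $\beta'=\beta$, so $D$ becomes an involutive anti-autoequivalence of $\mathcal{A}^{\beta_0,\alpha}$ preserving $\sigma_{\beta_0,\alpha,s}$-semistability.

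The main obstacle I expect is the heart-preservation step: while the numerical check is straightforward, verifying that $D$ sends $\mathcal{A}^{\beta,\alpha}$ to $\mathcal{A}^{\beta',\alpha}$ requires controlling the cohomology sheaves of $F^D$ at each step of the tilt. The excluded locus $\nu_{\beta,\alpha}(E)=0$ is precisely where an extra shift may appear and the categorical picture degenerates, which is why that hypothesis is essential.
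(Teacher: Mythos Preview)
The paper does not prove this proposition: it is simply imported from \cite[Proposition~6.12]{JMM}, with no argument given in the present text. So there is nothing in the paper to compare your proof against beyond the citation itself.

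That said, your sketch is a correct and standard outline for how such a duality statement is established. The numerical computation $\ch_i^{\beta'}(E^D)=(-1)^i\ch_i^{\beta}(E)$ and the resulting identity $Z_{\beta',\alpha,s}(E^D)=-\overline{Z_{\beta,\alpha,s}(E)}$ are right, and you correctly isolate the genuine content as the heart-preservation step: showing that $D$ carries $\mathcal{A}^{\beta,\alpha}$ to $\mathcal{A}^{\beta',\alpha}$ (or rather an object of positive tilt slope to one of negative tilt slope after dualizing), which is where the hypothesis $\nu_{\beta,\alpha}(E)\neq 0$ enters. One small caveat: your heuristic that ``a short exact sequence in $\mathcal{A}^{\beta,\alpha}$ dualizes to a short exact sequence in $\mathcal{A}^{\beta',\alpha}$'' requires that all three terms avoid the locus $\nu=0$, which you only assume for $E$; in practice one argues instead at the level of the phase function and HN filtrations, or equivalently uses that $D$ is an exact anti-autoequivalence and so reverses the roles of sub- and quotient objects. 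This is essentially the approach taken in \cite{JMM}, so your proposal is aligned with the source the paper cites.
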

\begin{proposition}
If $\ch(E)=\ch(E^D)$ then $v_{\beta_0}(E)=(-R,0,D,0)$. Additionally, if $E\in \mathcal{A}^{\beta_0,\alpha}$ for all $\alpha>0$ then $R,D\geq 0$.
\end{proposition}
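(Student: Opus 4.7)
The approach is to first compute $\ch(E^D)$ explicitly and extract the constraints forced by the identity $\ch(E)=\ch(E^D)$, then substitute these into the $\beta_0$-twisted Chern character components. For the second assertion I would use that the imaginary part of the Bridgeland central charge is non-negative on objects of the heart, applied in the two extreme limits $\alpha\to 0^{+}$ and $\alpha\to\infty$.

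For the first part, since $\calo_X(-e_X)$ is a line bundle and the shift by $2$ preserves the Chern character, one has $\ch(E^D)=\ch(E)^{\vee}\cdot e^{-e_X H}$, where $\ch(E)^{\vee}$ denotes the duality operation $\ch_i\mapsto(-1)^{i}\ch_i$. Writing $\ch(E)=(r,c_1,\ch_2,\ch_3)$ and matching $\ch(E)=\ch(E^D)$ degree by degree, the degree $0$ identity is trivial; the degree $1$ identity forces $c_1=-\tfrac{re_X}{2}H$; the degree $2$ identity then becomes automatic; and the degree $3$ identity determines $\ch_3$ in terms of $\ch_2$ and $r$. Plugging $\beta_0=-e_X/2$ and these relations into the formulas for $\ch_i^{\beta_0}(E)$, the vanishings $\ch_1^{\beta_0}(E)=0$ and $\ch_3^{\beta_0}(E)=0$ drop out by direct substitution (the four terms contributing to $\ch_3^{\beta_0}(E)$ cancel exactly). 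Setting $-R:=rH^{3}$ and $D:=\ch_2^{\beta_0}(E)H$ yields $v_{\beta_0}(E)=(-R,0,D,0)$.

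For the second assertion, the assumption $E\in\mathcal{A}^{\beta_0,\alpha}$ for every $\alpha>0$, together with the fact that $Z_{\beta_0,\alpha,s}$ is a stability function on $\mathcal{A}^{\beta_0,\alpha}$, gives $\operatorname{Im}Z_{\beta_0,\alpha,s}(E)\geq 0$ for all $\alpha>0$. Using the formula for $Z_{\beta,\alpha,s}$ together with the vanishings from the first part, this inequality becomes
$$
D+\frac{\alpha^{2}}{2}R \;\geq\; 0 \qquad \text{for every } \alpha>0.
$$
Letting $\alpha\to 0^{+}$ forces $D\geq 0$; letting $\alpha\to\infty$ then forces $R\geq 0$, since otherwise the left-hand side would tend to $-\infty$.

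The only real technical care needed is the sign tracking in the Chern character: both the duality $E\mapsto E^{\vee}$ and the twist by $\calo_X(-e_X)$ contribute signs that must be combined carefully, and the $\beta_0$-twist then redistributes these across all four degrees. Beyond this bookkeeping, the argument is essentially a one-line application of the positivity of $\operatorname{Im}Z$ on the heart, and the strength of the hypothesis $E\in\mathcal{A}^{\beta_0,\alpha}$ for \emph{all} $\alpha>0$ is precisely what permits the two independent limits that yield $R\geq 0$ and $D\geq 0$ separately.
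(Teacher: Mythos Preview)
Your proposal is correct. For the second assertion your argument is identical to the paper's: both use $\operatorname{Im}Z_{\beta_0,\alpha,s}(E)=D+\tfrac{\alpha^2}{2}R\geq 0$ for all $\alpha>0$ and take the two limits.

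For the first assertion the paper takes a slightly slicker route. Rather than computing $\ch(E^D)$ and then separately substituting into the $\beta_0$-twist formulas, it works with twisted Chern characters throughout, using the single identity $v_{\beta}(E)^{\vee}=v_{-\beta}(E^{\vee})$ together with the fact that tensoring by $\calo(-e_X)$ shifts the twist parameter. Since $\beta_0=-e_X/2$ is the fixed point of $\beta\mapsto -\beta-e_X$, one gets $v_{\beta_0}(E)^{\vee}=v_{\beta_0}(E^D)=v_{\beta_0}(E)$ in one line, and the vanishing of the odd components is immediate. Your direct computation arrives at the same place and has the virtue of being fully explicit, but it requires tracking the degree-3 cancellation by hand; the paper's formulation explains \emph{why} that cancellation is automatic (it is just the statement that $v_{\beta_0}$ is fixed under $(\cdot)^{\vee}$).
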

\begin{proof}
If $v=(v_0,v_1,v_2,v_3)$, define $v^{\vee}:=(v_0,-v_1,v_2,-v_3)$. Then it is clear that
$$
v_{\beta}(E)^{\vee}=v_{-\beta}(E^{\vee}).
$$
Thus
$$
v_{\beta_0}(E)^{\vee}=v_{-\beta_0-e_X}(E^{\vee}\otimes \mathcal{O}(-e_X))=v_{\beta_0}(E^D)=v_{\beta_0}(E).
$$
Additionally, if $E\in\mathcal{A}^{\beta_0,
\alpha}$ for all $\alpha>0$ then 
$$
D+\frac{\alpha^2}{2}R\geq 0\ \ \text{for all}\ \alpha>0,
$$
implying that $R,D\geq 0$.
\end{proof}

\begin{proposition}\label{stab:linebundles}
The line bundle $\calo(q_X)$ is $\sigma_{\beta_0,\alpha,s}$-stable for $\alpha<i_X/2$ and $s>0$. 
\end{proposition}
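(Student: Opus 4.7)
My approach is to proceed in three stages: (i) confirming that $\mathcal{O}(q_X)$ lies in the heart $\mathcal{A}^{\beta_0,\alpha}$, (ii) establishing its tilt ($\nu_{\beta_0,\alpha}$) stability in $\Coh^{\beta_0}(X)$, and (iii) upgrading this to $\sigma_{\beta_0,\alpha,s}$-stability. Stage (i) is a direct computation: since $\ch^{\beta_0}(\mathcal{O}(q_X)) = e^{(i_X/2)H}$, one has $\mu_{\beta_0}(\mathcal{O}(q_X)) = i_X/2 > 0$, placing $\mathcal{O}(q_X)$ in $\mathcal{T}_{\beta_0} \subset \Coh^{\beta_0}(X)$, and
$$
\nu_{\beta_0,\alpha}(\mathcal{O}(q_X)) = \frac{i_X^2 - 4\alpha^2}{4 i_X},
$$
which is strictly positive precisely when $\alpha < i_X/2$; combined with Stage (ii) this places $\mathcal{O}(q_X) \in \mathcal{T}_{\beta_0,\alpha} \subset \mathcal{A}^{\beta_0,\alpha}$.

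For Stage (ii) I would suppose a hypothetical destabilizing short exact sequence $0 \to F \to \mathcal{O}(q_X) \to G \to 0$ in $\Coh^{\beta_0}(X)$ with $\nu_{\beta_0,\alpha}(F) \geq \nu_{\beta_0,\alpha}(\mathcal{O}(q_X))$ and derive a contradiction. Taking the long exact sequence of $\Coh$-cohomology yields
$$
0 \to \mathcal{H}^{-1}(G) \to F \to \mathcal{O}(q_X) \to \mathcal{H}^0(G) \to 0,
$$
where $\mathcal{H}^{-1}(G) \in \mathcal{F}_{\beta_0}$ is torsion-free and $F$ is a sheaf in $\mathcal{T}_{\beta_0}$. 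A case analysis on $\ch_0(F) \in \{0,1\}$ follows. If $F$ has rank one, its image in $\mathcal{O}(q_X)$ is of the form $\mathcal{I}_Z(q_X - D)$ for an effective divisor $D$ and a subscheme $Z$ of codimension at least two; numerical bookkeeping of $\nu_{\beta_0,\alpha}$ together with the Bogomolov--Gieseker inequality applied to $\mathcal{H}^{-1}(G)$ forces $D = 0$ and $Z = \emptyset$, hence $F \simeq \mathcal{O}(q_X)$. If $F$ has rank zero (torsion), one examines separately the cases of $\ch_1(F) H^2 > 0$ and $\ch_1(F) H^2 = 0$: in the first, the numerical relation $\nu_{\beta_0,\alpha}(F) \geq \nu_{\beta_0,\alpha}(\mathcal{O}(q_X))$ combined with the BG inequality on $\mathcal{H}^{-1}(G)$ leads to a contradiction with the injectivity of $F \to \mathcal{O}(q_X)$; in the second, $F$ would be supported in codimension at least two, which is incompatible with containment in the locally free sheaf $\mathcal{O}(q_X)$.

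For Stage (iii), given a hypothetical $\lambda_{\beta_0,\alpha,s}$-destabilizing subobject $F \hookrightarrow \mathcal{O}(q_X)$ in $\mathcal{A}^{\beta_0,\alpha}$, the $\Coh^{\beta_0}$-cohomology long exact sequence together with the fact that $\mathcal{O}(q_X) \in \mathcal{T}_{\beta_0,\alpha}$ (hence concentrated in degree $0$ relative to the first tilt) forces $F \in \mathcal{T}_{\beta_0,\alpha}$, so $F$ embeds in $\mathcal{O}(q_X)$ already at the first tilt. Stage (ii) then yields $\nu_{\beta_0,\alpha}(F) \leq \nu_{\beta_0,\alpha}(\mathcal{O}(q_X))$ with equality only for $F = \mathcal{O}(q_X)$; substituting the resulting strict inequality into the definition of $\lambda_{\beta_0,\alpha,s}$ and using the hypothesis $s > 0$ yields the strict inequality $\lambda_{\beta_0,\alpha,s}(F) < \lambda_{\beta_0,\alpha,s}(\mathcal{O}(q_X))$, contradicting the destabilizing assumption. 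The main obstacle is Stage (ii): because $\mathcal{O}(q_X)$ attains equality $\Delta = 0$ in the Bogomolov--Gieseker inequality, the analysis of potential destabilizers is delicate and forces one to track the Chern characters very precisely; while tilt stability of line bundles is well understood for $\mathbb{P}^3$ and the smooth quadric threefold, a uniform treatment across all Fano threefolds of Picard rank one may require either case-specific input or appeal to the strong BG conjecture. A secondary subtlety arises in Stage (iii), where the comparison of $\lambda$-slopes for $F$ with strictly smaller $\nu$-slope must be checked explicitly to confirm that the restriction $s > 0$ is the key ingredient that prevents walls from forming inside the chamber.
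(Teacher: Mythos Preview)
Your plan is far more elaborate than what the paper actually does. The paper's proof consists entirely of invoking the known fact (from \cite{Li} and the earlier work of Bayer--Macr\`i--Toda) that a line bundle on a smooth projective threefold of Picard rank one is automatically tilt-stable and Bridgeland-stable wherever it belongs to the relevant heart; the only computation required is the slope calculation you carry out in Stage~(i), which confirms $\calo(q_X)\in\mathcal{A}^{\beta_0,\alpha}$ exactly when $\alpha<i_X/2$. So Stages~(ii) and~(iii) of your outline are an attempt to reprove a standard result that the paper simply cites.

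More importantly, your Stage~(iii) contains a genuine gap. First, from a short exact sequence $0\to F\to \calo(q_X)\to G\to 0$ in $\mathcal{A}^{\beta_0,\alpha}$ you correctly deduce $F\in\mathcal{T}_{\beta_0,\alpha}$, but you do \emph{not} get an embedding $F\hookrightarrow\calo(q_X)$ in $\Coh^{\beta_0}(X)$: the long exact sequence only gives
\[
0\lra \calh^{-1}_{\Coh^{\beta_0}}(G)\lra F\lra \calo(q_X),
\]
and the kernel $\calh^{-1}_{\Coh^{\beta_0}}(G)\in\mathcal{F}_{\beta_0,\alpha}$ need not vanish, so Stage~(ii) does not apply directly to $F$. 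Second, and more seriously, even if you could conclude $\nu_{\beta_0,\alpha}(F)<\nu_{\beta_0,\alpha}(\calo(q_X))$, this does \emph{not} imply $\lambda_{\beta_0,\alpha,s}(F)<\lambda_{\beta_0,\alpha,s}(\calo(q_X))$: the Bridgeland slope $\lambda_{\beta_0,\alpha,s}$ involves $\ch_3^{\beta_0}$ in its numerator, and this is entirely unconstrained by the tilt slope $\nu_{\beta_0,\alpha}$. A subobject with strictly smaller tilt slope can have arbitrarily large $\lambda$-slope, so the inference you sketch (``substituting the resulting strict inequality \dots\ yields $\lambda_{\beta_0,\alpha,s}(F)<\lambda_{\beta_0,\alpha,s}(\calo(q_X))$'') is simply false in general. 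The correct from-scratch argument hinges instead on the fact that $\calo(q_X)$ has discriminant $\Delta=0$ and realises equality in the generalized Bogomolov--Gieseker inequality; this forces any putative destabilizer in $\mathcal{A}^{\beta_0,\alpha}$ to have proportional twisted Chern character, which is precisely the content of the result from \cite{Li} that the paper cites.
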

\begin{proof}
This is a straightforward computation as line bundles on a threefold of Picard rank 1 are tilt stable and Bridgeland stable as long as they belong to $\Coh^{\beta_0}(X)$ and $\mathcal{A}^{\beta_0,\alpha}$,  respectively. Notice that
$$
v_{\beta_0}(\calo(q_X))=\left(H^3,\frac{i_X}{2}H^3,\frac{1}{2}\left(\frac{i_X}{2}\right)^2H^3,\frac{1}{6}\left(\frac{i_X}{2}\right)^3H^3\right),
$$
and so 
$$
\mu_{\beta_0}(\calo(q_X))=\frac{i_X}{2},\ \ \nu_{\beta_0,\alpha}(\calo(q_X))=\frac{\frac{1}{2}\left(\frac{i_X}{2}\right)^2-\frac{\alpha^2}{2}}{\frac{i_X}{2}}.
$$
Thus $\calo(q_X)\in \Coh^{\beta_0}(X)$ because $\calo(q_X)$ is $\mu_{\beta_0}$-stable. Therefore, $\calo(q_X)$ is also $\nu_{\beta_0,\alpha}$-stable. Since $\nu_{\beta_0,\alpha}(\calo(q_X))>0$ if and only if $\alpha<i_X/2$, then for these values $\calo(q_X)\in \mathcal{A}^{\beta_0,\alpha}$.
\end{proof}
Consider the region
$$
\mathcal{U}:=\left\{(\alpha,s)\colon \alpha,s>0,\ \left(s+\frac{1}{6}\right)\alpha^2<\frac{1}{6}\left(\frac{i_X}{2}\right)^2\right\}.
$$
We see $\mathcal{U}\subset \mathrm{Stab}(X)$ via the identification $(\alpha,s)\leftrightarrow \sigma_{\beta_0,\alpha,s}$. We refer to the set of stability conditions $\{\sigma_{\beta_0,\alpha,s}\}_{\alpha,s>0}$ as the $(\alpha,s)$-slice. From the computations above, it is clear that $\lambda_{\beta_0,\alpha,s}(\calo(q_X))>0$ for all $(\alpha,s)\in \mathcal{U}$.

\begin{definition}\label{instantonComplex}
Fix a numerical twisted Chern character $v_{\beta_0}=(-R,0,D,0)$ with $R\geq 0$, $D>0$, and let $\mathcal{C}$ be a chamber for $v_{\beta_0}$ such that $\mathcal{C}\cap\mathcal{U}\neq \emptyset$. Let $E\in D^b(X)$ with $v_{\beta_0}(E)=v_{\beta_0}$, we say that $E$ is a  $\mathcal{C}$-instanton object if $E$ is Bridgeland semistable for a stability condition in $\mathcal{C}$.
\end{definition}
\begin{remark}
It was proven in \cite[Section 5]{JMM} that for the numerical twisted Chern character $v_{\beta_0}=(-R,0,D,0)$, the wall and chamber decomposition of the $(\alpha,s)$-slice is finite. Moreover, an algorithm to compute the walls was provided.
\end{remark}

\begin{lemma}\label{limit:inst}
Let $E$ be a $\calc$-instanton that is also Bridgeland semistable in the outermost chamber of the $(\alpha,s)$-slice. Then if $(\alpha_0,s_0)\in\calc$, $E$ is $\sigma_{\beta_0,\alpha,s}$-semistable for all $\alpha\geq \alpha_0$, i.e., in all the chambers to the right of $\calc$.
\end{lemma}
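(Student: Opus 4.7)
The plan is to use the monotonic structure of the numerical walls for $v_{\beta_0}(E)$ in the $(\alpha,s)$-slice, together with a contradiction argument. First, observe that $v_{\beta_0}(E) = (-R,0,D,0)$ forces $\ch_1^{\beta_0}(E) = 0$ and $\ch_3^{\beta_0}(E) = 0$, so the Bridgeland slope $\lambda_{\beta_0,\alpha,s}(E) = 0$ identically on $\calu$; moreover, the denominator $D + \tfrac{\alpha^2}{2}R H^3$ of $\lambda_{\beta_0,\alpha,s}(E)$ is strictly positive throughout $\calu$. Hence any potential destabilization of $E$ by a subobject $F \hookrightarrow E$ in $\mathcal{A}^{\beta_0,\alpha}$, with $v_{\beta_0}(F) = (r,c_F,d,e_F)$, is captured by the single inequality $e_F - (s+\tfrac{1}{6})\alpha^2 c_F > 0$, where $c_F := \ch_1^{\beta_0}(F) H^2$ and $e_F := \ch_3^{\beta_0}(F)$.

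The key structural observation is that this inequality depends on $(\alpha,s)$ only through the scalar $\tau := (s+\tfrac{1}{6})\alpha^2$. When $c_F \neq 0$, the numerical wall $\lambda_{\sigma}(F) = 0$ is the level curve $\{\tau = e_F/c_F\}$ and the destabilizing locus $\{\lambda_\sigma(F) > 0\}$ is one of the two $\tau$-half-spaces $\{\tau < e_F/c_F\}$ or $\{\tau > e_F/c_F\}$, according to $\mathrm{sign}(c_F)$; when $c_F = 0$ the sign of $\lambda_\sigma(F)$ is constant on $\calu$. By \cite[Section 5]{JMM}, only finitely many numerical walls for $v_{\beta_0}$ are relevant, and since they are all level sets of $\tau$, the chambers traversed as $\alpha$ increases from $\alpha_0$ toward the outermost chamber $\calc_\infty$ can be linearly ordered by $\tau$; denote this finite sequence by $\calc = \calc_0, \calc_1, \dots, \calc_N = \calc_\infty$.

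Suppose for contradiction that $E$ fails to be $\sigma_{\beta_0,\alpha,s}$-semistable for some stability condition in an intermediate chamber $\calc_k$ with $0 < k < N$, destabilized by some subobject $F$. The $\tau$-half-space on which $\lambda_\sigma(F) > 0$ then contains $\calc_k$; since the chambers are $\tau$-ordered, this half-space must also contain at least one of the two endpoint chambers $\calc_0 = \calc$ or $\calc_N = \calc_\infty$. Either way, $F$ would destabilize $E$ at the corresponding endpoint, contradicting the hypothesis that $E$ is semistable both in $\calc$ and in the outermost chamber.

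The main obstacle will be handling the tilt walls, i.e., the vertical lines $\alpha = \alpha_i$ at which the heart $\mathcal{A}^{\beta_0,\alpha}$ itself changes; across such a wall, a subobject $F \hookrightarrow E$ in the heart at $\calc_k$ need not remain a subobject of $E$ in the heart at a different chamber, so the monotonicity argument above cannot be applied verbatim to that specific $F$. The plan to resolve this is to invoke once more the finiteness of the chamber decomposition of \cite[Section 5]{JMM} and, across each tilt wall, to extract an appropriate Harder--Narasimhan or Jordan--H\"older factor of $E$ realizing each relevant numerical class of a potential destabilizer as an actual subobject of $E$ in the corresponding heart; since only finitely many numerical classes arise, the preceding half-space analysis in $\tau$ applies to each of them and produces the desired contradiction.
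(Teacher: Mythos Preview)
Your numerical setup in the first two paragraphs is correct and matches the paper's: for $v_{\beta_0}(E)=(-R,0,D,0)$ one has $\lambda_{\beta_0,\alpha,s}(E)\equiv 0$, the destabilising condition for a subobject $F$ depends only on $\tau=(s+\tfrac16)\alpha^2$, and the walls are the level curves of $\tau$. The contradiction argument in your second paragraph is also the right idea.

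The gap is exactly where you locate it, and your proposed fix does not close it. You want to say that if $F\hookrightarrow E$ destabilises in an intermediate chamber $\calc_k$ then ``$F$ destabilises $E$'' in one of the endpoint chambers, but $F$ lives in $\mathcal{A}^{\beta_0,\alpha_k}$ and is not a priori a subobject of $E$ in $\mathcal{A}^{\beta_0,\alpha_0}$ or in the heart at the outermost chamber. Your suggested remedy---passing to HN/JH factors across tilt walls to realise ``each relevant numerical class''---is not an argument: nothing guarantees that a factor of $E$ in the new heart carries the same numerical class as $F$, and even if only finitely many numerical classes occur, you have not explained why any of them yields an actual destabilising subobject of $E$ in the endpoint heart.

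The paper avoids this issue entirely by a much simpler observation: every wall between $\calc$ and the outermost chamber is a level set $\{\tau=\tau_0\}$ with $\tau_0>(s_0+\tfrac16)\alpha_0^2$, and therefore meets the vertical line $\{\alpha=\alpha_0,\ s>0\}$ at $s=\tau_0/\alpha_0^2-\tfrac16>s_0$. Hence one can cross all the chambers between $\calc$ and $\calc_\infty$ by staying on this vertical line. Along it the heart $\mathcal{A}^{\beta_0,\alpha_0}$ is \emph{fixed} (it depends only on $\alpha$, not on $s$), so any destabilising subobject $A\hookrightarrow E$ in an intermediate chamber is already a subobject in $\mathcal{A}^{\beta_0,\alpha_0}$. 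One then checks that $\lambda_{\beta_0,\alpha_0,s}(A)$ is affine in $s$; since $\lambda_{\beta_0,\alpha_0,s}(A)\leq 0$ in the outermost chamber and $\lambda_{\beta_0,\alpha_0,s}(A)=0$ on its own wall, linearity forces $\lambda_{\beta_0,\alpha_0,s_0}(A)>0$, contradicting semistability at $(\alpha_0,s_0)$. Replace your last paragraph with this reduction to the vertical line and the proof is complete.
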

\begin{proof}
Since the walls are disjoint and each destabilizing wall for $E$ intersects the vertical line $\{(\alpha_0,s)\colon s>0\}$ then it is enough to prove that no subobject of $E$ in the category $\mathcal{A}^{\beta_0,\alpha_0}$ can destabilize $E$ for $s>s_0$. Indeed, if $A\hookrightarrow E$ is such subobject then
\begin{align*}
    \lambda_{\beta_0,\alpha_0,s}(A)&\leq \lambda_{\beta_0,\alpha_0,s}(E)\ \ \text{in the outermost chamber, and}\\
    \lambda_{\beta_0,\alpha_0,s}(A)&=\lambda_{\beta_0,\alpha_0,s}(E)=0\ \ \text{at the wall produced by}\ A.
\end{align*}
Thus, $\lambda_{\beta_0,\alpha_0,s_0}(A)>\lambda_{\beta_0,\alpha_0,s_0}(E)$ because the numerator of $\lambda_{\beta_0,\alpha_0,s}$ is linear in $s$. A contradiction, unless $A$ never really destabilizes $E$, i.e., $\lambda_{\beta_0,\alpha_0,s}(A)=0$ for all $s\geq s_0$.
\end{proof}
\begin{theorem}\label{mainVanishing}
Let $E\in D^b(X)$ be an object with $v_{\beta_0}(E)=(-R,0,D,0)$ and $\mathcal{C}$ be a chamber for this Chern character in the $(\alpha,s)$-slice such that $\mathcal{C}\cap\mathcal{U}\neq \emptyset$. Then
\begin{enumerate}
    \item $E$ is a $\mathcal{C}$-instanton object if and only if $E^D$ is a $\mathcal{C}$-instanton object.
    \item If $E$ is a $\mathcal{C}$-instanton object then $E\in \langle\calo(q_X)\rangle^{\perp}$.
\end{enumerate}
\end{theorem}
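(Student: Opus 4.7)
The plan is to treat the two assertions separately, the key observation being that the duality functor of \eqref{dualityFunctor} preserves the numerical invariants $v_{\beta_0}$ and, by Proposition \ref{duality}, preserves Bridgeland semistability on the slice $\beta=\beta_0$.

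For assertion (1), I would first check that $v_{\beta_0}(E^D) = v_{\beta_0}(E) = (-R,0,D,0)$. This follows from the identity $v_{\beta}(F)^{\vee} = v_{-\beta}(F^{\vee})$ combined with the fact that the vector $(-R,0,D,0)$ is invariant under $v\mapsto v^{\vee}$. Since the wall-and-chamber decomposition in the $(\alpha,s)$-slice depends only on the Chern character, $\mathcal{C}$ is a chamber for both $E$ and $E^D$. Because $\ch_1^{\beta_0}(E)H^2 = 0$, we have $\nu_{\beta_0,\alpha}(E) = +\infty$, so Proposition \ref{duality} applies and yields the equivalence $E$ is $\sigma_{\beta_0,\alpha,s}$-semistable if and only if $E^D$ is, giving (1).

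For assertion (2), fix $(\alpha_0,s_0)\in\mathcal{C}\cap\mathcal{U}$ and observe that both $E$ and $\calo(q_X)$ lie in the heart $\mathcal{A}^{\beta_0,\alpha_0}$: for $E$, the central charge $Z_{\beta_0,\alpha_0,s_0}(E)$ is purely imaginary with positive imaginary part, so $E$ has phase $1/2$; for $\calo(q_X)$, this is Proposition \ref{stab:linebundles}. Direct computation gives $\lambda_{\beta_0,\alpha_0,s_0}(E) = 0$ while $\lambda_{\beta_0,\alpha_0,s_0}(\calo(q_X))>0$ throughout $\mathcal{U}$, so the standard semistability argument yields $\Hom(\calo(q_X),E) = 0$. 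For $i<0$ the vanishing $\Ext^i(\calo(q_X),E) = 0$ is automatic, since both objects live in the heart $\mathcal{A}^{\beta_0,\alpha_0}$.

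For $i\geq 1$, the plan is to combine Serre duality with the duality functor: using $\omega_X = \calo(-i_XH)$ and the identification $R\inhom(E,\calo(-q_X-e_X)) \simeq E^D(-q_X)[-2]$, obtained by twisting the defining relation $R\inhom(E,\calo(-e_X)) = E^D[-2]$ by $\calo(-q_X)$, one arrives at
$$
\Ext^i(\calo(q_X),E) \cong \Ext^{1-i}(\calo(q_X),E^D)^{\vee}.
$$
By (1), $E^D$ is itself a $\mathcal{C}$-instanton, so the argument of the previous paragraph applied to $E^D$ in place of $E$ gives $\Hom(\calo(q_X),E^D) = 0$, settling the case $i=1$. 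For $i\geq 2$ the exponent $1-i$ is strictly negative and $\Ext^{1-i}$ vanishes since $\calo(q_X)$ and $E^D$ both belong to the heart $\mathcal{A}^{\beta_0,\alpha_0}$. The main care needed is the shift and twist bookkeeping in the displayed isomorphism; once this is verified, every required vanishing reduces to a slope inequality between $\calo(q_X)$ and a $\mathcal{C}$-instanton (either $E$ or $E^D$) living in a common heart.
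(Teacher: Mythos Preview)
Your proposal is correct and follows essentially the same approach as the paper: part (1) via Proposition \ref{duality} together with $v_{\beta_0}(E)=v_{\beta_0}(E^D)$, and part (2) via the slope inequality $\lambda_{\beta_0,\alpha,s}(\calo(q_X))>0=\lambda_{\beta_0,\alpha,s}(E)$ for $\Hom$, followed by the Serre-duality identity $\Ext^i(\calo(q_X),E)\cong\Ext^{1-i}(\calo(q_X),E^D)^*$ to reduce the remaining degrees to negative Exts between objects of a common heart. Your added remarks that $\nu_{\beta_0,\alpha}(E)=+\infty$ (so the hypothesis of Proposition \ref{duality} holds) and that $E^D\in\mathcal{A}^{\beta_0,\alpha_0}$ are welcome clarifications the paper leaves implicit.
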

\begin{proof}
Part (1) is a direct consequence of Proposition \ref{duality} and the fact that $v_{\beta_0}(E)=v_{\beta_0}(E^D)$. For part (2) notice that
$$
\Hom(\calo(q_X),E)=0
$$
since for stability conditions on $\mathcal{C}$, $\calo(q_X)$ is stable with 
$$
\lambda_{\beta_0,\alpha,s}(\calo(q_X))>0=\lambda_{\beta_0,\alpha,s}(E).
$$
Notice that
\begin{align*}
    \Ext^i(\calo(q_X),E)&=\Ext^{3-i}(E,\calo(q_X-i_X))^*\\
    &=\Ext^{3-i}(\calo,E^{\vee}\otimes \calo(-e_X)\otimes \calo(q_X-i_X+e_X))^*\\
    &=\Ext^{3-i}(\calo,E^D\otimes\calo(-q_X)[-2])^*\\
    &=\Ext^{1-i}(\calo(q_X),E^D)^*.
\end{align*}
Combining this computation with part (1) we get $\Ext^1(\calo(q_X),E)=0$. For other values of $i$ we get a negative Ext between objects in the same heart, which is impossible. Therefore $E\in \langle\calo(q_X)\rangle^{\perp}$.
\end{proof}
\begin{example}
It was shown in \cite[Proposition 5.3]{JMM} that the only Bridgeland semistable objects of twisted Chern character $v_{\beta_0}=(0,0,D,0)$ with $D>0$ in the outermost chamber of the corresponding finite wall and chamber decomposition of the $(\alpha,s)$-slice are precisely the (twisted) Gieseker semistable sheaves. Moreover, if a (twisted) Gieseker semistable sheaf $T$ with $v_{\beta}(T)=(0,0,D,0)$ is also a  $\mathcal{C}$-instanton object, i.e., does not get destabilized at any point before the potential numerical wall produced by $\calo(q_X)$, then Theorem \ref{mainVanishing} shows that $E$ is a 1-dimensional instanton sheaf.  
\end{example}
\begin{example}\label{spinorEx}
Let $X=Q^3\subset \mathbb{P}^4$ be a quadric hypersurface and $\iota\colon X\hookrightarrow \mathbb{P}^4$ the corresponding inclusion. The spinor bundle $S$ on $X$ is defined by the short exact sequence
$$
0\lra \calo_{\p3}(-1)^{\oplus 4}\stackrel{M}{\lra} \calo_{\p3}^{\oplus 4}\stackrel{N}{\lra} \iota_*S\lra 0,
$$
where the matrix $M$ satisfies
$$
M^2=(x_0^2+x_1x_2+x_3x_4)I.
$$
Moreover, restricting $N$ to $X$ produces the short exact sequence
\begin{equation}\label{spinor}
0\lra S(-1)\lra \calo_X^{\oplus 4}\lra S\lra 0.
\end{equation}
The sheaf $S(-1)$ is an instanton sheaf of rank 2 and minimal charge. Notice that in this case $i_X=3$ and so $q_X=1=e_X$. Thus $\beta_0=-1/2$ and a simple computation using the exact sequence \eqref{spinor} leads to
$$
v_{\beta_0}(S(-1)[1])=\left(-2H^3,0,\frac{H^3}{4},0\right)=\left(-4,0,\frac{1}{2},0\right).
$$
Notice that since $S(-1)$ is slope stable with  $\mu_{\beta_0}(S(-1))=0$ then $S(-1)[1]\in \Coh^{\beta_0}(X)$, and since $\nu_{\beta_0,\alpha}(S(-1)[1])=+\infty$ then $S(-1)[1]\in \mathcal{A}^{\beta_0,\alpha}$ for all $\alpha>0$.

Now, it follows from \cite[Theorem 3.1]{JMM} that $S(-1)[1]$ is asymptotically $\lambda_{\beta_0,\alpha,s}$-stable and so Bridgeland stable in the outermost chamber of the $(\alpha,s)$-slice. Moreover, as proven in \cite[Section 5.3]{JMM}, if $E$ is an object with $v_{\beta_0}(E)=(-R,0,D,0)$ and $A\hookrightarrow E$ is a destabilizing subobject producing a 1-dimensional wall in the $(\alpha,s)$-slice, then if $v_{\beta_0}(A)=(r,c,d,e)$ we must have 
\begin{align}
   & 0<d<D,\label{ineq:d}\\
    &0<c(6e)\leq \min\{(2d)^2,(2D-2d)^2\},\label{ineq:c6e}\\
    &-\frac{c}{6e}(2D-2d)-R\leq r\leq \frac{c}{6e}2d.\label{ineq:r} 
\end{align}
Besides, in our case we also have
\begin{equation}\label{integralcond}
\ch_1(A)H^2\in 2\mathbb{Z},\ \ch_2(A)H\in \mathbb{Z},\ r\in 2\mathbb{Z},\ 4d\in\mathbb{Z},\ \text{and}\ 24e\in\mathbb{Z}. 
\end{equation}
Thus, using $v_{\beta_0}(S(-1)[1])=\left(-4,0,\frac{1}{2},0\right)$, a straightforward computation shows that the only possibilities for the Chern character of a destabilizing subobject of $S(-1)[1]$ are
$$
v_{\beta_0}(A)=\left(r,1,\frac{1}{4},\frac{1}{24}\right),\ \ r=-6,\ 2,
$$
which produce the only potential destabilizing wall:
$$
W=\left\{(\alpha,s)\colon \left(s+\frac{1}{6}\right)\alpha^2=\frac{1}{24}\right\}.
$$
Thus, if $\mathcal{C}_{out}$ denotes the outermost chamber for $v_{\beta_0}(S(-1)[1])$ in the $(\alpha,s)$-slice, then $\mathcal{C}_{out}\cap \mathcal{U}\neq \emptyset$ and so $S(-1)[1]$ is a $\mathcal{C}_{out}$-instanton object.
\end{example}

\begin{example}\label{ex:idealsheaf}
Let $X$ be a Fano threefold of Picard number one, index 2 and degree $H^3>1$. In this case, $e_X=0$ and $q_X=1$. If $\ell\subset X$ is a line then $\ch(\mathcal{I}_{\ell}[1])=(-1,0,\ell,0)$ and so
$$
v_0(\mathcal{I}_{\ell}[1])=(-H^3,0,1,0).
$$
It follows from \cite[Example 3.4]{JMM} and the Gieseker stability of $\mathcal{O}_{\ell}$ that $\mathcal{I}_{\ell}[1]$ is $\sigma_{0,\alpha,s}$-stable for all $\alpha\gg 0$. On the other hand, if $\mathcal{I}_{\ell}[1]$ is ever unstable in the $(\alpha,s)$-slice then there should be a destabilising sub-object $A\hookrightarrow \mathcal{I}_{\ell}[1]$ with $v_0(A)=(r,c,d,e)$ satisfying inequalities \eqref{ineq:d}, \eqref{ineq:c6e}, and \eqref{ineq:r}. Thus, we should have 
$$
0<2d<2
$$
and so $2d=1$, $c=1$, and $6e=1$. However, $c=\ch_1(A)H^2\neq 1$ because $H^3>1$ and so such $A$ can not exist. Therefore, $\mathcal{I}_{\ell}[1]$ is a $\mathcal{C}$-instanton object for each chamber $\mathcal{C}$ such that $\mathcal{C}\cap \mathcal{U}\neq \emptyset$.
\end{example}

\begin{example} 
It was noticed in \cite[Example 6.15]{JMM} that the  object $A\in D^b(\mathbb{P}^3)$ defined by the exact triangle
\begin{equation}\label{nonsheaf:inst}
\calo_H(-1)[1] \longrightarrow A \longrightarrow \calo_H(2), 
\end{equation}
where $H$ denotes a hyperplane in $\p3$, is Bridgeland stable in the stability chamber that contains the rank 0 instanton sheaves as stable objects. Thus, such $A$ is a $\mathcal{C}$-instanton object in our definition. We would like to directly check that $A$ has indeed a ``monadic'' presentation. 

Our starting point is the Euler sequence for the cotangent bundle of $H$:
$$ 0 \lra \Omega^1_H(2) \lra \calo_H(1)^{\oplus 3} \lra \calo_H(2) \lra 0 $$
Composing the epimorphism above with $\calo_{\p3}(1)^{\oplus 3}\onto\calo_H(1)^{\oplus 3}$ we obtain an exact sequence
\begin{equation}\label{goodG}
    0 \lra G \lra \calo_{\p3}(1)^{\oplus 3} \lra \calo_H(2) \lra 0,
\end{equation}
with the sheaf $G$ being given by the following extension
$$ 0 \lra \calo_{\p3}^{\oplus 3} \lra G \lra \Omega_H^1(2) \lra 0. $$
Now there is a composed epimorphism $\op3^{\oplus 3}\onto\calo_H^{\oplus 3}\onto\Omega_H^1(2)$
whose kernel $K$ is given by the exact sequence
$$ 0 \lra \op3(-1)^{\oplus 3} \lra K \lra \calo_H(-1) \lra 0, $$
and also satisfies the sequence
\begin{equation}
0\lra K \lra \op3^{\oplus 6} \lra G\lra 0.
\end{equation}
We then obtain a monomorphism
$$ \alpha\colon \op3(-1)^{\oplus 3} \into K \into \op3^{\oplus 6}  $$
and a morphism
$$ \beta\colon \op3^{\oplus 6} \onto G \into \op3(1)^{\oplus 3} $$
whose cokernel is precisely $\calo_H(2)$. Moreover, $\beta\alpha=0$, and $(\ker\beta/\im\alpha)\simeq\calo_H(-1)$, since $\ker\beta=K$ and $\im\alpha\simeq\op3(-1)^{\oplus 3}$. 

In summary, the object $A$ is quasi-isomorphic to the complex
$$ \op3(-1)^{\oplus 3} \stackrel{\alpha}{\lra} \op3^{\oplus 6} \stackrel{\beta}{\lra} \op3(1)^{\oplus 3}. $$
\end{example}
\begin{remark}
The $\mathcal{C}$-instanton object $A$ defined by the exact triangle \eqref{nonsheaf:inst} is a new type of instanton. Indeed, the only instanton complexes previously defined were the perverse instantons and $A$ does not fall into this category since $\mathcal{H}^{-1}(A)=\calo_H(-1)$ is not a torsion free sheaf (see Definition \ref{defn:perverse}).
\end{remark}
\subsection{Monad descriptions from quiver regions} In this subsection we will study two instances, namely $\mathbb{P}^3$ and the quadric $Q^3\subset \mathbb{P}^4$, in which we have full strong exceptional collections producing quiver regions intercepting the corresponding $(\alpha,s)$-slice. This will then lead to monad-type descriptions for some $\mathcal{C}$-instanton objects. We remark that similar techniques may be used to study other Fano threefolds with full strong exceptional collections.

\subsubsection{$X=\mathbb{P}^3$} In this case we have $e_X=0$ and so $\beta_0=0$. A simple computation as in \cite[Lemma 6.10]{JMM} shows that for 
$$
(\alpha,s)\in \mathcal{R}=\left\{(\alpha,s)\colon 1<(6s+1)\alpha^2<\frac{4-3\alpha^2}{2-\alpha^2},\ 0<\alpha<1,\ s>0\right\}
$$
we have $\calo_{\p3}(-2)[2],\calo_{\p3}(-1)[2],\calo_{\p3}[1],\calo_{\p3}(1)\in\mathcal{A}^{0,\alpha}$ and moreover
$$
\lambda_{0,\alpha,s}(\calo_{\p3}(-2)[2])<\lambda_{0,\alpha,s}(\calo_{\p3}(1))\leq 0=\lambda_{0,\alpha,s}(\calo_{\p3}[1])\leq \lambda_{0,\alpha,s}(\calo_{\p3}(-1)[2]).
$$
Thus, for each $(\alpha,s)\in \mathcal{R}$ we can choose $t_{\alpha,s}\in\mathbb{R}$ such that
$$
\lambda_{0,\alpha,s}(\calo_{\p3}(-2)[2])<t_{\alpha,s}<\lambda_{0,\alpha,s}(\calo_{\p3}(1)),
$$
so that tilting $\mathcal{A}^{0,\alpha}$ with respect to the torsion pair
\begin{align*}
        \mathcal{F}_{0,\alpha,s}&=\{E\in\mathcal{A}^{0,\alpha}\colon \lambda_{0,\alpha,s}(F)\leq t_{\alpha,s}\ \text{for all subobjects}\ F\hookrightarrow E\ \text{in}\ \mathcal{A}^{0,\alpha}\}\\
        \mathcal{T}_{0,\alpha,s}&=\{E\in \mathcal{A}^{0,\alpha}\colon \lambda_{0,\alpha,s}(Q)> t_{\alpha,s}\ \text{for all quotients}\ E\twoheadrightarrow Q\ \text{in}\ \mathcal{A}^{0,\alpha}\},
    \end{align*}
we obtain the heart $\langle \calo_{\p3}(-2)[3],\calo_{\p3}(-1)[2],\calo_{\p3}[1],\calo_{\p3}(1)\rangle$.

Now, notice that every Bridgeland semistable object $E$ with $\ch(E)=(-R,0,D,0)$ is in the subcategory $\mathcal{T}_{0,\alpha,s}$ and so it is quasi-isomorphic to a complex of the form 
\begin{equation}\label{quiver:p3}
0\lra \calo_{\p3}(-1)^{\oplus D}\lra \calo_{\p3} ^{\oplus 2D+R}\lra \calo_{\p3}(1)^{\oplus D}.
\end{equation}
Moreover, it was also established in \cite[Lemma 6.6]{JMM} that the Bridgeland stability of $E$ is equivalent to King stability of the complex \eqref{quiver:p3} with respect to the vector $\Theta_0=(-1,0,1)$. In particular, none such objects can be Bridgeland unstable in the region $\mathcal{R}$. Therefore there is only one Bridgeland chamber $\mathcal{C}$ for $v_0=(-R,0,D,0)$ intersecting the region $\mathcal{R}$, and all the $\mathcal{C}$-instanton objects in $\p3$ have the monad-type description \eqref{quiver:p3}.
\begin{center}
    \begin{figure}[H]
        \centering
        \includegraphics[scale=0.45]{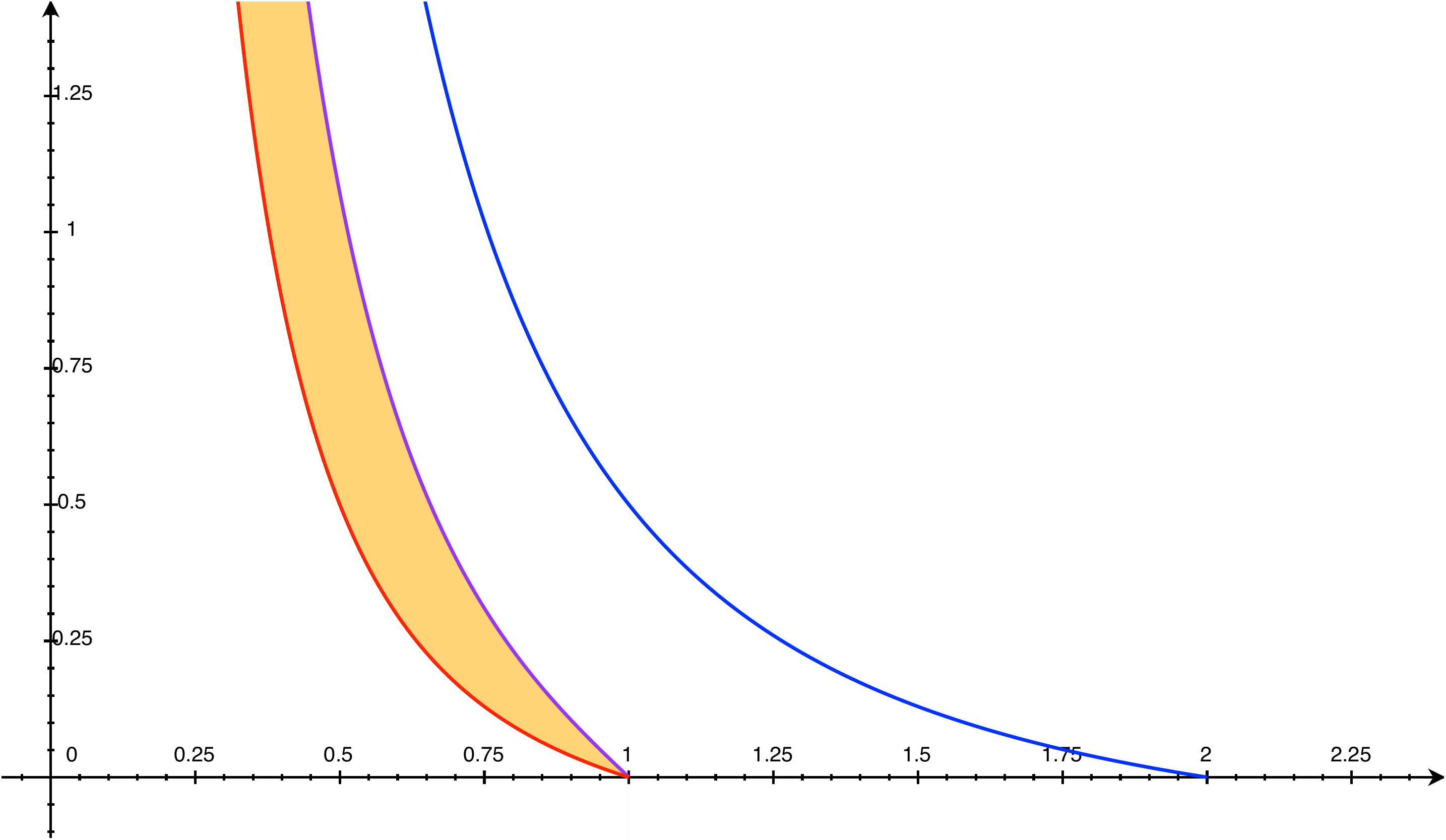}
        \caption{The quiver region $\mathcal{R}$ is pictured in the $(\alpha,s)$-slice in yellow. The blue curve is the potential wall for $\calo_{\p3}(2)$ (the instanton wall), the red curve is the potential wall for $\calo_{\p3}(1)$ (the collapsing wall), and the purple curve is given by the equation of Bridgeland slopes  $\lambda_{0,\alpha,s}(\calo_{\p3}(2)[2])=\lambda_{0,\alpha,s}(\calo_{\p3}(1))$.}
        \label{fig:quiverregion}
    \end{figure}
\end{center}
\begin{remark}\label{vector:quiverR}
As explained in Section \ref{sec:repQuiver}, in \cite{JS} the authors studied in detail the variation of GIT for representations of the quiver
\begin{equation}\label{quiverP3}
\begin{tikzcd}
\bullet \arrow[rr,bend left,"\alpha_{0}"] \arrow[rr,bend right,swap,"\alpha_{3}"] &\vdots & \bullet \arrow[rr,bend left,"\beta_{0}"] \arrow[rr,bend right,swap,"\beta_{3}"] &\vdots &\bullet 
\end{tikzcd}
\end{equation}
for the dimension vector $\vec{n}=(1,3,1)$, i.e., $R=2$ and $D=1$. They proved that King's space of stability parameters $\vec{n}^{\perp}$ has a finite wall and chamber decomposition where only two chambers have associated nonempty moduli spaces. These chambers share a wall given by the vector $\Theta_0:=(-1,0,1)\in \vec{n}^{\perp}$. Moreover, they proved that the $\Theta_0$-semistable representations are S-equivalent to complexes \eqref{quiver:p3} quasi-isomorphic to shifts of instanton sheaves (both locally and non-locally free) or to perverse instantons.
\end{remark}
\begin{example}
Consider the unique non split extension
$$
0\lra \calo_{\p3}(-2)[2]\lra E\lra \calo_{\p3}(2)\lra 0.
$$
At the ``instanton wall'' given by   $\lambda_{0,\alpha,s}(\calo_{\p3}(2))=0$, the object $E$ is strictly semistable and so it is stable in a chamber $\calc_0$ intersecting the region $\mathcal{U}$. Notice that even though the object $E$ is self-dual, i.e., $E^D=E$, this is not a $\calc_0$-instanton in our definition since its Chern character  $\ch(E)=(2,0,2,0)$ does not satisfy the usual Bogomolov inequality. Moreover, the chamber $\calc_0$ can not intersect the quiver region $\mathcal{R}$ since such $E$ does not have a monad-type description. Indeed, if that were the case then $E$ would be quasi-isomorphic to a complex of the form
$$
\calo_{\p3}(-1)^{\oplus 2}\lra \calo_{\p3}^{\oplus 2}\lra \calo_{\p3}(1)^{\oplus 2},
$$
but this would imply that there is a surjective map $\calo_{\p3}(1)^{\oplus 2}\twoheadrightarrow \calo_{\p3}(2)$, which is impossible.
\end{example}
\begin{example}
Let $E$ be a rank 2 instanton bundle in $\p3$ as defined in Section 2, in particular $E$ is $\mu$-stable. Then the classical monad presentation of $E$ tells us that $E[1]$ is quasi-isomorphic to a complex of the form
$$
\calo_{\p3}^{\oplus c}\lra \calo_{\p3}^{\oplus 2+2c}\lra \calo_{\p3}(1)^{\oplus c}. 
$$
Moreover, by \cite[Proposition 6]{JS} this representation of the quiver $\mathbf{Q}$ in display \eqref{Q} is stable with respect to the vector $\Theta_0=(-1,0,1)$. On the other hand, due to the $\mu$-stability and the locally freeness of $E$, the object $E[1]$ is stable in the outermost chamber as it satisfies the hypotheses of \cite[Theorem 3.1]{JMM}. Therefore, Lemma \ref{limit:inst} implies that $E[1]$ is a $\calc$-instanton object for every chamber $\calc$ such that $\calc\cap\mathcal{U}\neq \emptyset$. Clearly, the same argument shows that if $E$ is $\mu$-stable locally-free instanton sheaf on $\p3$ then $E[1]$ is a $\calc$-instanton object.
\end{example}
\subsubsection{$X=Q^3\subset\p4$} In this case we have $e_X=1$ and so $\beta_0=-1/2$. We have the full strong exceptional collection
$$
\calo_X(-1),\ S(-1),\ \calo_X,\ \calo_X(1),
$$
where $S(-1)$ is the spinor bundle defined in Example \ref{spinorEx}. Notice that since $H^3=2$ then
$$
v_{\beta_0}(\calo_X(n))=\left(2,2\left(n+\frac{1}{2}\right),\left(n+\frac{1}{2}\right)^2,\frac{1}{3}\left(n+\frac{1}{2}\right)^3\right).
$$
Thus, computing the signs of the Mumford and tilt slopes we get  
\begin{align*}
\calo_X(-1)[1],\ \calo_X,\ \calo_X(1)\in \Coh^{\beta_0}(X),&\\
    \calo_X(-1)[2]\in \mathcal{A}^{\beta_0,\alpha}\ \ \text{for}\ \alpha\leq \frac{1}{2}&,\\
    \calo_X\in \mathcal{A}^{\beta_0,\alpha}\ \ \text{for}\ \alpha< \frac{1}{2}&,\\
    \calo_X(1)\in \mathcal{A}^{\beta_0,\alpha}\ \ \text{for}\ \alpha< \frac{3}{2}&.
\end{align*}
For $(\alpha,s)$ in the region
$$
\mathcal{R}=\left\{(\alpha,s)\colon \frac{1}{4}<(6s+1)\alpha^2<\frac{9}{4},\ \ 0<\alpha<\frac{1}{2},\ \ s>0\right\}
$$
we have 
$$
\lambda_{\beta_0,\alpha,s}(\calo_X)<\lambda_{\beta_0,\alpha,s}(S(-1)[1])=0<\lambda_{\beta_0,\alpha,s}(\calo_X(-1)[2]),\ \ \text{and}\ \ \lambda_{\beta_0,\alpha,s}(\calo_X(1))>0,
$$
and from Example \ref{spinorEx}, it follows that all these objects are Bridgeland semistable in $\mathcal{R}$. Additionally, since within $\mathcal{R}$,  $\lambda_{\beta_0,\alpha,s}(\calo_X(1))$ approaches zero as we get closer to $(6s+1)\alpha^2=\frac{9}{4}$ while $\lambda_{\beta_0,\alpha,s}(\calo_X(-1)[2])$ approaches zero as we get closer to $(6s+1)\alpha^2=\frac{1}{2}$, then the region
$$
\mathcal{R}_0=\left\{(\alpha,s)\colon \lambda_{\beta_0,\alpha,s}(\calo_X(-1)[2])<\lambda_{\beta_0,\alpha,s}(\calo_X(1)),\ \alpha,s>0\right\}
$$
has a nonempty intersection with $\mathcal{R}$. Therefore, for each $(\alpha,s)\in \mathcal{R}\cap\mathcal{R}_0$ we can choose $t_{\alpha,s}\in\mathbb{R}$ such that
$$
\lambda_{\beta_0,\alpha,s}(\calo_X(-1)[2])<t_{\alpha,s}<\lambda_{\beta_0,\alpha,s}(\calo_X(1)),
$$
and tilting the heart $\mathcal{A}^{\beta_0,\alpha}$ with respect to the torsion pair
\begin{align*}
        \mathcal{F}_{\beta_0,\alpha,s}&=\{E\in\mathcal{A}^{\beta_0,\alpha}\colon \lambda_{\beta_0,\alpha,s}(F)\leq t_{\alpha,s}\ \text{for all subobjects}\ F\hookrightarrow E\ \text{in}\ \mathcal{A}^{\beta_0,\alpha}\}\\
        \mathcal{T}_{\beta_0,\alpha,s}&=\{E\in \mathcal{A}^{\beta_0,\alpha}\colon \lambda_{\beta_0,\alpha,s}(Q)> t_{\alpha,s}\ \text{for all quotients}\ E\twoheadrightarrow Q\ \text{in}\ \mathcal{A}^{\beta_0,\alpha}\},
    \end{align*}
we obtain the heart $\langle \calo_X(-1)[3],S(-1)[2],\calo_X[1],\calo_X(1)\rangle$.

Therefore, it follows that if $E$ is a  $\lambda_{\beta_0,\alpha,s}$-semistable object for $(\alpha,s)\in \mathcal{R}\cap\mathcal{R}_0$ with  $v_{\beta_0}(E)=(-R,0,D,0)$, then $E[1]$ is quasi-isomorphic to a complex of the form
$$
\calo_X(-1)^{\oplus a}\lra S(-1)^{\oplus b} \lra \calo_X^{\oplus c}\lra \calo_X(1)^{\oplus n}.
$$
A straightforward computation of the Chern characters of this complex gives us 
$$
n=0,\ a=c=D-\frac{R}{8}=\ch_2(E)H,\ \text{and}\  b=c+\frac{R}{4}.
$$
Thus, if $\mathcal{C}$ is a chamber for $v_{\beta_0}=(-R,0,D,0)$ intersecting the region $\mathcal{R}\cap\mathcal{R}_0$, then every $\mathcal{C}$-instanton object has a monad-type description of the form
$$
\calo_X(-1)^{\oplus c}\lra S(-1)^{\oplus c+\frac{R}{4}} \lra \calo_X^{\oplus c}.
$$

\subsection{Acyclic extensions revisited} Let $X$ be a Fano threefold of Picard number one and index 2. In this case $q_X=1$ and $e_x=0$. As in the case of rank 2 instanton bundles we have the following.
\begin{lemma}\label{vanishing:Cinst}
Let $E$ be an stable $\mathcal{C}$-instanton object with $v_0(E)=(-R,0,D,0)$. Then
$$
\dim\Ext^{i}(\calo_X[1],E)=\begin{cases} D-\frac{R}{H^3} &\text{if}\ i=1\\
0&\text{otherwise}.
\end{cases}
$$
\end{lemma}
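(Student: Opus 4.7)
The plan is to reduce each $\Ext^i(\mathcal{O}_X[1], E) = \Ext^{i-1}(\mathcal{O}_X, E)$ to either a vanishing forced by Bridgeland stability and the heart structure, or (for $i=1$) to the Euler characteristic via Hirzebruch--Riemann--Roch. Throughout I would use that $e_X = 0$ on an index-$2$ Fano threefold, so $\beta_0 = 0$ and $K_X = \mathcal{O}_X(-2)$; in particular Serre duality reads $\Ext^{j}(\mathcal{O}_X, E) = \Ext^{3-j}(E, \mathcal{O}_X(-2))^*$.

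First I would set up the relevant stability picture. A two-step tilt computation shows $\mathcal{O}_X[1] \in \mathcal{A}^{0,\alpha}$ for every $\alpha > 0$ and $\mathcal{O}_X(-2)[2] \in \mathcal{A}^{0,\alpha}$ for every $\alpha < 2$, and both are Bridgeland stable where they lie in the heart, since line bundles are tilt stable (exactly as in Proposition \ref{stab:linebundles}). The central charges
\begin{align*}
Z_{0,\alpha,s}(\mathcal{O}_X[1]) &= i\tfrac{\alpha^2}{2}H^3, \qquad Z_{0,\alpha,s}(E) = i\bigl(D + \tfrac{\alpha^2}{2}R\bigr), \\
Z_{0,\alpha,s}(\mathcal{O}_X(-2)[2]) &= H^3(1-2s) + iH^3\bigl(2 - \tfrac{\alpha^2}{2}\bigr)
\end{align*}
show that $E$ and $\mathcal{O}_X[1]$ both have phase $1/2$, while $\mathcal{O}_X(-2)[2]$ has phase strictly less than $1/2$ whenever $s < 1/2$. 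I would then choose $\sigma = \sigma_{0,\alpha,s} \in \mathcal{C} \cap \mathcal{U}$ with $s < 1/2$, so that all three objects are simultaneously $\sigma$-stable.

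Next, I would invoke three vanishing principles. Schur's lemma for Bridgeland semistable objects yields $\Hom(\mathcal{O}_X[1], E) = 0$, since $\mathcal{O}_X[1]$ and $E$ are distinct $\sigma$-stable objects of the same phase; this gives $\Ext^0(\mathcal{O}_X[1], E) = 0$. The strict phase inequality $\phi(E) > \phi(\mathcal{O}_X(-2)[2])$ forces $\Hom(E, \mathcal{O}_X(-2)[2]) = 0$, hence by Serre duality $\Ext^1(\mathcal{O}_X, E) = 0$, i.e., $\Ext^2(\mathcal{O}_X[1], E) = 0$. The remaining indices $i < 0$ and $i \geq 3$ fall under the standard vanishing of negative $\Ext$s between objects of a common heart: for $i < 0$ this applies directly to the pair $\mathcal{O}_X[1], E \in \mathcal{A}^{0,\alpha}$; for $i \geq 3$, Serre duality rewrites $\Ext^i(\mathcal{O}_X[1], E) = \Ext^{2-i}(E, \mathcal{O}_X(-2)[2])^*$, which vanishes since $E, \mathcal{O}_X(-2)[2] \in \mathcal{A}^{0,\alpha}$ and $2 - i < 0$.

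Finally, for the single non-vanishing case $i = 1$, I would compute $\dim \Ext^1(\mathcal{O}_X[1], E) = \dim \Hom(\mathcal{O}_X, E)$ as an Euler characteristic. With $\ch(E) = -R/H^3 + (D/H^3)H^2$ derived from $v_0(E) = (-R, 0, D, 0)$, and using $H \cdot c_2(X) = 12$ (equivalent to $\chi(\mathcal{O}_X) = 1$), Hirzebruch--Riemann--Roch gives $\chi(\mathcal{O}_X, E) = D - R/H^3$. The main obstacle I expect is verifying that $\mathcal{C} \cap \mathcal{U}$ contains a stability condition with $s < 1/2$ at which the line bundles $\mathcal{O}_X[1]$ and $\mathcal{O}_X(-2)[2]$ are still genuinely stable; this requires a short analysis of the potential destabilizing walls of these line bundles in the $(\alpha, s)$-slice along the lines of Proposition \ref{stab:linebundles}.
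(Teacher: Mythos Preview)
Your approach is essentially the paper's, phrased dually. The paper writes
\[
\Ext^i(\calo_X[1],E)\cong\Ext^{2-i}(\calo_X(2),E^D)^*
\]
and then uses that $E^D$ is again a stable $\calc$-instanton (Theorem~\ref{mainVanishing}(1)) together with $\lambda_{0,\alpha,s}(\calo_X(2))>0$ on $\mathcal{U}$; you instead write $\Ext^i(\calo_X[1],E)\cong\Ext^{2-i}(E,\calo_X(-2)[2])^*$ and compare phases of $E$ and $\calo_X(-2)[2]$ directly. Since $(\calo_X(2))^D=\calo_X(-2)[2]$ and $\Hom(\calo_X(2),E^D)=\Hom(E,\calo_X(-2)[2])$, these are literally the same vanishing. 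Your version is marginally more direct in that it does not invoke the stability of $E^D$, only of $E$ and of the line bundle shift.

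The one slip is in your formula for $Z_{0,\alpha,s}(\calo_X(-2)[2])$: the real part is $H^3\bigl(\tfrac{4}{3}-2(s+\tfrac{1}{6})\alpha^2\bigr)$, not $H^3(1-2s)$. The phase of $\calo_X(-2)[2]$ is therefore strictly less than $\tfrac{1}{2}$ exactly when $(s+\tfrac{1}{6})\alpha^2<\tfrac{2}{3}$, and this holds on all of $\mathcal{U}$, where $(s+\tfrac{1}{6})\alpha^2<\tfrac{1}{6}(i_X/2)^2=\tfrac{1}{6}$. Hence no extra condition $s<\tfrac{1}{2}$ is required, and the obstacle you flag at the end evaporates once the central charge is corrected.
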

\begin{proof}
A simple verification of slopes as the one in Proposition \ref{stab:linebundles} will give us $\calo_{X}(2)\in\mathcal{A}^{0,\alpha}$ for all $\alpha^2<2$. Moreover, $\calo_X(2)$ is stable for each stability condition in the $(\alpha,s)$-slice with $\alpha^2<2$. 

Now, notice that
\begin{align*}
    \Ext^i(\calo_X[1],E)&\cong \Ext^{3-i}(E,\calo_X(-2)[1])^*\\
    &=\Ext^{4-i}(\calo_X(2),E^{\vee})^*\\
    &=\Ext^{2-i}(\calo_X(2),E^D)^*.
\end{align*}
Since $E^D$ is again a stable $\mathcal{C}$-instanton object and $\lambda_{0,\alpha,s}(\calo_X(2))>0$ on the region $\mathcal{U}$, then 
$$
\Ext^i(\calo_X[1],E)=0\ \ \text{for}\ \ i\geq 2.
$$
Likewise, $\Ext^i(\calo_X[1],E)=0$ for $i\leq 0$ because $\calo_X[1]$ is stable for every stability condition in the $(\alpha,s)$-slice and $\lambda_{0,\alpha,s}(\calo_X[1])=\lambda_{0,\alpha,s}(E)=0$. Thus
$$
\dim\Ext^{1}(\calo_X[1],E)=\chi(E)=D-\frac{R}{H^3}.
$$
\end{proof}
\begin{proposition}
Let $E$ be an stable $\mathcal{C}$-instanton object, then there exists a $\mathcal{C}$-instanton object $\tilde{E}$ fitting into an exact sequence
$$
0\lra E\lra \tilde{E} \lra \calo_X^{\oplus D-\frac{R}{H^3}}[1]\lra 0,
$$
and satisfying $\Hom^{\bullet}(\calo_X[1],\tilde{E})=0$.
\end{proposition}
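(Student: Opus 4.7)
The plan is to take $\tilde{E}$ to be the left mutation $\tilde{E}:=\mathbb{L}_{\calo_X}(E)$. Using the identity $\Ext^i(\calo_X[1],E)=\Ext^{i-1}(\calo_X,E)$, Lemma~\ref{vanishing:Cinst} translates into $R\Hom(\calo_X,E)\cong \C^{D-R/H^3}$ concentrated in degree $0$, so the evaluation triangle
$$
\calo_X^{\oplus D-R/H^3}\xrightarrow{\mathrm{ev}}E\lra \tilde{E}\lra \calo_X^{\oplus D-R/H^3}[1]
$$
defines $\tilde{E}$, and rotating it produces the exact triangle in the statement. Applying $R\Hom(\calo_X,-)$ to this triangle forces the induced endomorphism of $R\Hom(\calo_X,E)$ to be the identity (since $R\Hom(\calo_X,\calo_X)=\C$), so $R\Hom(\calo_X,\tilde{E})=0$, which is exactly the orthogonality $\Hom^\bullet(\calo_X[1],\tilde{E})=0$.

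Next I would promote the triangle to a short exact sequence in $\mathcal{A}^{\beta_0,\alpha}$. Since $\mu_{\beta_0}(\calo_X)=0$, we have $\calo_X\in\mathcal{F}_{\beta_0}$ and therefore $\calo_X[1]\in\Coh^{\beta_0}(X)$; since $\ch_1^{\beta_0}(\calo_X[1])=0$, the tilt slope $\nu_{\beta_0,\alpha}(\calo_X[1])=+\infty$ places this shift in $\mathcal{T}_{\beta_0,\alpha}\subset \mathcal{A}^{\beta_0,\alpha}$. Because the heart is closed under extensions, $\tilde{E}\in\mathcal{A}^{\beta_0,\alpha}$, and the triangle becomes a genuine short exact sequence there. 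A direct Chern character computation then gives $v_0(\tilde{E})=(-H^3 D,\,0,\,D,\,0)$, which is of the required instanton form with new rank data $R'=H^3 D$ and $D'=D$.

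For the Bridgeland semistability of $\tilde{E}$, I would observe that both $E$ and $\calo_X^{\oplus D-R/H^3}[1]$ have Bridgeland slope $\lambda_{\beta_0,\alpha,s}=0$ (numerically, since $\ch_1^{\beta_0}=\ch_3^{\beta_0}=0$ for each of them), with $E$ semistable in $\mathcal{C}$ by hypothesis and $\calo_X[1]$ stable as a line-bundle shift. An extension of two Bridgeland semistable objects of the same slope is again semistable of that slope, so $\tilde{E}$ is semistable in $\mathcal{C}$, completing the proof that $\tilde{E}$ is a $\mathcal{C}$-instanton object.

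The main obstacle I expect is this final semistability step, which must handle the numerically degenerate object $\calo_X[1]$ with formally infinite tilt slope. The careful argument requires taking an arbitrary subobject $F\hookrightarrow\tilde{E}$ in $\mathcal{A}^{\beta_0,\alpha}$, decomposing it into the pullback $F\cap E$ and the image of $F$ in $\calo_X^{\oplus D-R/H^3}[1]$, and then using the semistabilities of the two outer terms to conclude $\lambda_{\beta_0,\alpha,s}(F)\leq 0$. A secondary bookkeeping issue is that $v_0(\tilde{E})\neq v_0(E)$, so one must verify that the original chamber $\mathcal{C}$ for $v_0(E)$ sits inside a chamber for $v_0(\tilde{E})$ in the $(\alpha,s)$-slice that still meets $\mathcal{U}$, justifying the mildly abusive notation ``$\mathcal{C}$-instanton'' for $\tilde{E}$.
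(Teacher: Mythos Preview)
Your proposal is correct and follows essentially the same argument as the paper: define $\tilde{E}$ as the cone of the evaluation map (equivalently, the left mutation through $\calo_X$), deduce the vanishing of $\Hom^{\bullet}(\calo_X[1],\tilde{E})$ from the defining triangle, and obtain semistability from the fact that $\tilde{E}$ is an extension in $\mathcal{A}^{\beta_0,\alpha}$ of two semistable objects with $\lambda_{\beta_0,\alpha,s}=0$. Your worry about the semistability step is unfounded since $\lambda_{\beta_0,\alpha,s}(\calo_X[1])=0$ is well-defined (only the \emph{tilt} slope is infinite, and $\calo_X[1]$ is stable everywhere in the $(\alpha,s)$-slice), while the chamber bookkeeping issue you flag is a genuine but minor abuse that the paper also leaves implicit.
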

\begin{proof}
Since $\Hom^{\bullet}(\calo_X[1],E)\otimes \calo_X[1]=\calo_X^{\oplus D-\frac{R}{H^3}}$ by Lemma \ref{vanishing:Cinst}, then $\tilde{E}$ is just the cone of the evaluation map
$$
\Hom^{\bullet}(\calo_X[1],E)\otimes \calo_X[1]\rightarrow E.
$$
The semistability of $\tilde{E}$ follows from the facts that 
\begin{equation}\label{acyclic:Cinst}
0\lra E\lra \tilde{E} \lra \calo_X^{\oplus D-\frac{R}{H^3}}[1]\lra 0
\end{equation}
is a short exact sequence in $\mathcal{A}^{0,\alpha}$ whenever $E\in \mathcal{A}^{0,\alpha}$, and that
$$
\lambda_{0,\alpha,s}(E)=\lambda_{0,\alpha,s}(\calo_X[1])=0.
$$
The vanishing of $\Hom^{\bullet}(\calo_X[1],\tilde{E})$ follows from applying this functor to the short exact sequence \eqref{acyclic:Cinst}.
\end{proof}
\begin{example}
Assume that $H^3>1$ and let $\ell\subset X$ be a line. Then $\calo_{\ell}$ is a rank 0 instanton sheaf and also a $\mathcal{C}$-instanton. In fact, $\calo_{\ell}$ is a Gieseker stable sheaf and so it is $\lambda_{0,\alpha,s}$-semistable for all $\alpha\gg 0$ (see \cite[Proposition 5.3]{JMM}), and if $\calo_{\ell}$ was ever unstable in the $(\alpha,s)$-slice then it would have to be destabilized by a sub-object $A\hookrightarrow \calo_{\ell}$ with $v_0(A)=(r,c,d,e)$ satisfying 
$$
2d=1,\ \text{and}\ \ 0<c(6e)\leq 1,
$$
which is impossible since $H^3>1$. The acyclic extension of $\calo_{\ell}$ is exactly $\mathcal{I}_{\ell}[1]$, which is a strictly semistable $\mathcal{C}$-instanton (see Example \ref{ex:idealsheaf}). Notice that unlike the case of rank 2 instanton bundles whose acyclic extensions are sheaves, the acyclic extension of a rank 0 instanton sheaf that is also an stable $\mathcal{C}$-instanton is necessarily a complex. 

\end{example}


\begin{thebibliography}{AHMPL19}

\bibitem[AC23]{AC}
V.~{Antonelli} and G.~{Casnati}, \textsl{ Instanton sheaves on projective
  schemes},
\newblock Journal of Pure and Applied Algebra \textbf{ 227}(4), 107246 (2023).

\bibitem[ACG21a]{ACG}
V.~{Antonelli}, G.~{Casnati} and O.~{Genc}, \textsl{ {Even and odd instanton
  bundles on Fano threefolds}},
\newblock arXiv e-prints , arXiv:2105.00632 (May 2021), {2105.00632}.

\bibitem[ACG21b]{ACG21}
V.~Antonelli, G.~Casnati and O.~Genc, \textsl{ Instanton bundles on
  {$\mathbb{P}^1\times\mathbb{F}_1$}},
\newblock Comm. Algebra \textbf{ 49}(8), 3594--3613 (2021).

\bibitem[AHDM78]{ADHM}
M.~Atiyah, N.~Hitchin, V.~Drinfeld and Y.~Manin, \textsl{ Construction of
  instantons},
\newblock Physics Letters A \textbf{ 65}(3), 185--187 (1978).

\bibitem[AHMPL19]{AHMP}
M.~Aprodu, S.~Huh, F.~Malaspina and J.~Pons-Llopis, \textsl{ Ulrich bundles on
  smooth projective varieties of minimal degree},
\newblock Proceedings of the American Mathematical Society  (2019).

\bibitem[AM20a]{AM20}
V.~Antonelli and F.~Malaspina, \textsl{ Instanton bundles on the {S}egre
  threefold with {P}icard number three},
\newblock Math. Nachr. \textbf{ 293}(6), 1026--1043 (2020).

\bibitem[AM20b]{AM}
V.~Antonelli and F.~Malaspina, \textsl{ Instanton bundles on the Segre
  threefold with Picard number three},
\newblock Mathematische Nachrichten \textbf{ 293}(6), 1026--1043 (2020).

\bibitem[AO94]{AO}
V.~Ancona and G.~Ottaviani, \textsl{ Stability of special instanton bundles on
  $\mathbb{P}^{2n+1}$},
\newblock Transactions of the American Mathematical Society \textbf{ 341},
  677--693 (1994).

\bibitem[AS89]{SA}
E.~Arrondo and I.~Sols, \textsl{ Classification of smooth congruences of low
  degree},
\newblock J. Reine Angew. Math. \textbf{ 393}, 199--219 (1989).

\bibitem[Ati78]{A}
F.~Atiyah,
\newblock \textsl{ Geometry of Yang-Mills fields}, pages 216--221,
\newblock Springer Berlin Heidelberg, Berlin, Heidelberg, 1978.

\bibitem[AW77]{AW}
M.~F. Atiyah and R.~S. Ward, \textsl{ Instantons and algebraic geometry},
\newblock Communications in Mathematical Physics \textbf{ 55}(2), 117--124
  (1977).

\bibitem[BH78]{BH}
W.~Barth and K.~Hulek, \textsl{ Monads and moduli of vector bundles},
\newblock manuscripta mathematica \textbf{ 25}(4), 323--347 (1978).

\bibitem[CCGM21]{CCGM}
G.~Casnati, E.~Coskun, O.~Genc and F.~Malaspina, \textsl{ {Instanton Bundles on
  the Blowup of the Projective 3-Space at a Point}},
\newblock Michigan Mathematical Journal \textbf{ 70}(4), 807 -- 836 (2021).

\bibitem[CJ22]{CJ}
G.~{Comaschi} and M.~{Jardim}, \textsl{ {Instanton sheaves on Fano
  threefolds}},
\newblock arXiv e-prints , arXiv:2208.03210 (August 2022), {2208.03210}.

\bibitem[CMR09]{CM}
L.~Costa and R.~M. Mir{\'o}-Roig, \textsl{ Monads and instanton bundles on
  smooth hyperquadrics},
\newblock Mathematische Nachrichten \textbf{ 282}(2), 169--179 (2009).

\bibitem[CS88]{MCS}
M.~M. Capria and S.~M. Salamon, \textsl{ Yang-{M}ills fields on quaternionic
  spaces},
\newblock Nonlinearity \textbf{ 1}(4), 517--530 (1988).

\bibitem[Don84]{D}
S.~K. Donaldson, \textsl{ Instantons and geometric invariant theory},
\newblock Communications in Mathematical Physics \textbf{ 93}(4), 453--460
  (1984).

\bibitem[Don85]{DFlag}
S.~K. Donaldson,
\newblock Vector bundles on the flag manifold and the {W}ard correspondence,
\newblock in \textsl{ Geometry today ({R}ome, 1984)}, volume~60 of \textsl{
  Progr. Math.}, pages 109--119, Birkh\"{a}user Boston, Boston, MA, 1985.

\bibitem[ESW03]{ESW}
D.~Eisenbud, F.-O. Schreyer and J.~Weyman, \textsl{ Resultants and Chow Forms
  via Exterior Syzygies},
\newblock Journal of the American Mathematical Society \textbf{ 16}(3),
  537--579 (2003).

\bibitem[Fae14]{F}
D.~Faenzi, \textsl{ Even and odd instanton bundles on Fano threefolds of Picard
  number one},
\newblock Manuscripta Mathematica \textbf{ 144}(1), 199--239 (2014).

\bibitem[Har78]{H78}
R.~Hartshorne, \textsl{ Stable vector bundles and instantons},
\newblock Communications in Mathematical Physics \textbf{ 59}(1), 1--15 (1978).

\bibitem[HJM15]{HJM}
A.~A. Henni, M.~Jardim and R.~V. Martins, \textsl{ ADHM construction of
  perverse instanton sheaves},
\newblock Glasgow Mathematical Journal \textbf{ 57}(2), 285–321 (2015).

\bibitem[HL11]{HLa}
M.~Hauzer and A.~Langer, \textsl{ Moduli spaces of framed perverse instantons
  on $\mathbb{P}^3$},
\newblock Glasgow Mathematical Journal \textbf{ 53}(1), 51–96 (2011).

\bibitem[Jar06]{J-inst}
M.~Jardim, \textsl{ Instanton sheaves on complex projective spaces},
\newblock Collectanea Mathematica \textbf{ 57}, 69--91 (2006).

\bibitem[JM10]{JVM-10}
M.~Jardim and R.~V. Martins, \textsl{ Linear and Steiner Bundles on Projective
  Varieties},
\newblock Communications in Algebra \textbf{ 38}(6), 2249--2270 (2010).

\bibitem[JMM22]{JMM}
M.~Jardim, A.~Maciocia and C.~Martinez, \textsl{ {Vertical Asymptotics for
  Bridgeland Stability Conditions on 3-Folds}},
\newblock International Mathematics Research Notices  (08 2022),
  {https://academic.oup.com/imrn/advance-article-pdf/doi/10.1093/imrn/rnac236/45630479/rnac236.pdf},
\newblock rnac236.

\bibitem[JMT17]{JMaT}
M.~Jardim, M.~Maican and A.~S. Tikhomirov, \textsl{ {Moduli spaces of rank 2
  instanton sheaves on the projective space}},
\newblock Pacific J. Math. \textbf{ 291}, 399--424 (2017), {1702.06553}.

\bibitem[JP15]{JP}
M.~Jardim and D.~M. Prata, \textsl{ Vector bundles on projective varieties and
  representations of quivers},
\newblock Algebra Discrete Math. \textbf{ 20}(2), 217--249 (2015).

\bibitem[JS20]{JS}
M.~Jardim and D.~D. Silva, \textsl{ Instanton sheaves and representations of
  quivers},
\newblock Proceedings of the Edinburgh Mathematical Society \textbf{ 63}(4),
  984–1004 (2020).

\bibitem[Kuz12]{K}
A.~Kuznetsov, \textsl{ Instanton bundles on Fano threefolds},
\newblock Open Mathematics \textbf{ 10}(4), 1198--1231 (2012).

\bibitem[Li19]{Li}
C.~Li, \textsl{ Stability conditions on fano threefolds of picard number 1},
\newblock JEMS. \textbf{ 21}(3), 709--726 (2019).

\bibitem[MMPL20]{MMPL}
F.~Malaspina, S.~Marchesi and J.~Pons-Llopis, \textsl{ Instanton bundles on the
  flag variety F(0,1,2)},
\newblock Annali Scuola Normale Superiore - Classe Di Scienze  (2020).

\bibitem[OS86]{OS}
C.~Okonek and H.~Spindler, \textsl{ Mathematical instanton bundles on {${\bf
  P}^{2n+1}$}},
\newblock J. Reine Angew. Math. \textbf{ 364}, 35--50 (1986).

\bibitem[OSS11]{OSS}
C.~Okonek, M.~Schneider and H.~Spindler,
\newblock \textsl{ Vector bundles on complex projective spaces},
\newblock Modern Birkh\"{a}user Classics, Birkh\"{a}user/Springer Basel AG,
  Basel, 2011,
\newblock Corrected reprint of the 1988 edition, With an appendix by S. I.
  Gelfand.

\bibitem[Ott90]{O}
G.~Ottaviani, \textsl{ On {C}ayley bundles on the five-dimensional quadric},
\newblock Boll. Un. Mat. Ital. A (7) \textbf{ 4}(1), 87--100 (1990).

\bibitem[Sal84]{S84}
S.~M. Salamon,
\newblock Quaternionic structures and twistor spaces,
\newblock in \textsl{ Global {R}iemannian geometry ({D}urham, 1983)}, Ellis
  Horwood Ser. Math. Appl., pages 65--74, Horwood, Chichester, 1984.

\bibitem[WW90]{WW}
R.~S. Ward and R.~O. Wells, Jr.,
\newblock \textsl{ Twistor geometry and field theory},
\newblock Cambridge Monographs on Mathematical Physics, Cambridge University
  Press, Cambridge, 1990.

\end{thebibliography}
\end{document}